\documentclass[onecolumn]{article}

\usepackage{graphicx,epsfig}
\usepackage{times}
\usepackage{hyperref}
\usepackage{amssymb, amsthm}
\usepackage {amssymb,latexsym,epic,eepic,stmaryrd,url}
\pagestyle{empty}
\setlength{\topmargin}{0.25in}
\setlength{\columnsep}{2.0pc}
\setlength{\headheight}{0.0in}
\setlength{\headsep}{0.0in}
\setlength{\oddsidemargin}{-.19in}
\setlength{\parindent}{1pc}
\textheight 8.75in
\textwidth 6.8in

\newtheorem{thm}{Theorem}
\newtheorem{clm}{Claim}
\newtheorem{fact}{Fact}
\newtheorem{lem}{Lemma}
\newtheorem{asm}{Assumption}
\newtheorem{obs}{Observation}
\newcommand{\bqed}{\mbox{}\hfill$\blacksquare$\vspace{2mm}\\}

\bibliographystyle{siam}

\begin{document}
\title{\Large \bf Acyclic Edge Coloring of 2-degenerate Graphs}

\author{Manu Basavaraju\thanks{Computer Science and Automation department,
Indian Institute of Science,
Bangalore- 560012,
India.  {\tt manu@csa.iisc.ernet.in}} \and L. Sunil Chandran\thanks{Computer Science and Automation department,
Indian Institute of Science,
Bangalore- 560012,
India.  {\tt sunil@csa.iisc.ernet.in}}
}

\date{}
\pagestyle{plain}
\maketitle

\begin{abstract}

An $acyclic$ edge coloring of a graph is a proper edge coloring such that there are no bichromatic cycles. The \emph{acyclic chromatic index} of a graph is the minimum number k such that there is an acyclic edge coloring using k colors and is denoted by $a'(G)$.  A graph is called $2$-$degenerate$ if any of its induced subgraph has a vertex of degree at most 2. The class of $2$-$degenerate\ graphs$ properly contain $series$-$parallel\ graphs$, $outerplanar\ graphs$, \emph{non-regular subcubic graphs}, \emph{planar graphs of girth at least 6} and \emph{circle graphs of girth at least 5} as subclasses. It was conjectured by Alon, Sudakov and Zaks (and much earlier by Fiamcik) that $a'(G)\le \Delta+2$, where $\Delta =\Delta(G)$ denotes the maximum degree of the graph. We prove the conjecture for $2$-$degenerate$ graphs. In fact we prove a stronger bound: we prove that if $G$ is a 2-degenerate graph with maximum degree $\Delta$, then $a'(G)\le \Delta + 1$.

\end{abstract}

\noindent \textbf{Keywords:} Acyclic edge coloring, acyclic edge chromatic number, 2-degenerate graphs, series-parallel graphs, outer planar graphs.

\section{Introduction}

All graphs considered in this paper are finite and simple. A proper \emph{edge coloring} of $G=(V,E)$ is a map $c: E\rightarrow C$ (where $C$ is the set of available $colors$ ) with $c(e) \neq c(f)$ for any adjacent edges $e$,$f$. The minimum number of colors needed to properly color the edges of $G$, is called the chromatic index of $G$ and is denoted by $\chi'(G)$. A proper edge coloring c is called acyclic if there are no bichromatic cycles in the graph. In other words an edge coloring is acyclic if the union of any two color classes induces a set of paths (i.e., linear forest) in $G$. The \emph{acyclic edge chromatic number} (also called \emph{acyclic chromatic index}), denoted by $a'(G)$, is the minimum number of colors required to acyclically edge color $G$. The concept of \emph{acyclic coloring} of a graph was introduced by Gr\"unbaum \cite{Grun}. The \emph{acyclic chromatic index} and its vertex analogue can be used to bound other parameters like \emph{oriented chromatic number} and \emph{star chromatic number} of a graph, both of which have many practical applications, for example, in wavelength routing in optical networks ( \cite{ART}, \cite{KSZ} ). Let $\Delta=\Delta(G)$ denote the maximum degree of a vertex in graph $G$. By Vizing's theorem, we have $\Delta \le \chi'(G) \le \Delta +1 $(see \cite{Diest} for proof). Since any acyclic edge coloring is also proper, we have $a'(G)\ge\chi'(G)\ge\Delta$. \newline

It has been conjectured by Alon, Sudakov and Zaks \cite{ASZ} (and much earlier by Fiamcik \cite{Fiam}) that $a'(G)\le\Delta+2$ for any $G$. Using probabilistic arguments Alon, McDiarmid and Reed \cite{AMR} proved that $a'(G)\le60\Delta$. The best known result up to now for arbitrary graph, is by Molloy and Reed  \cite{MolReed} who showed that $a'(G)\le16\Delta$. Muthu, Narayanan and Subramanian \cite{MNS1} proved that $a'(G)\le4.52\Delta$ for graphs $G$ of girth at least 220 (\emph{Girth} is the length of a shortest cycle in a graph).\newline

Though the best known upper bound for general case is far from the conjectured $\Delta+2$, the conjecture has been shown to be true for some special classes of graphs. Alon, Sudakov and Zaks \cite{ASZ} proved that there exists a constant $k$ such that $a'(G)\le\Delta+2$ for any graph $G$ whose girth is at least $k\Delta\log\Delta$. They also proved that $a'(G)\le\Delta+2$ for almost all $\Delta$-regular graphs. This result was improved by Ne\v set\v ril and Wormald \cite{NesWorm} who showed that for a random $\Delta$-regular graph $a'(G)\le \Delta+1$. Muthu, Narayanan and Subramanian proved the conjecture for grid-like graphs \cite{MNS2}. In fact they gave a better bound of $\Delta+1$ for these class of graphs. From Burnstein's \cite{Burn} result it follows that the conjecture is true for subcubic graphs. Skulrattankulchai \cite{Skul} gave a polynomial time algorithm to color a subcubic graph using $\Delta+2 = 5$ colors.

Determining $a'(G)$ is a hard problem both from theoretical and algorithmic points of view. Even for the simple and highly structured class of complete graphs, the value of $a'(G)$ is still not determined exactly. It has also been shown by Alon and Zaks \cite{AZ} that determining whether $a'(G)\le3$ is NP-complete for an arbitrary graph $G$. The vertex version of this problem has also been extensively studied ( see \cite{Grun}, \cite{Burn}, \cite{Boro}). A generalization of the acyclic edge chromatic number has been studied: The \emph{r-acyclic edge chromatic number} $a'_r(G)$ is the minimum number of colors required to color the edges of the graph $G$ such that every cycle $C$ of $G$ has at least min\{$\vert C \vert$,$r$\} colors ( see \cite{GeRa}, \cite{GrePi}).

~~~~~~~~~

\noindent\textbf{Our Result:}We prove the conjecture for 2-degenerate graphs. A graph $G$ is called $k$-$degenerate$ if any induced subgraph of $G$, has a vertex of degree at most $k$. For example, planar graphs are 5-degenerate, forests are 1-degenerate. The earliest result on acyclic edge coloring of 2-degenerate graphs was by Card and Roditty \cite{CRod}, where they proved that $a'(G)\le\Delta + k-1$, where $k$ is the maximum edge connectivity, defined as \begin{math} k = \max_{u,v \in V(G)} \lambda(u,v)\end{math} , where $\lambda(u,v)$ is the edge- connectivity of the pair u,v. Note that here $k$ can be as high as $\Delta$. Muthu,Narayanan and Subramanian \cite{MNS3} proved that $a'(G)\le \Delta+1$ for outerplanar graphs which are a subclass of 2-degenerate graphs and posed the problem of proving the conjecture for 2-degenerate graphs as an open problem. In fact they have informed us that very recently they have also derived an upper bound of $\Delta +1$ for series-parallel graphs \cite{MNS4}, which is a slightly bigger subclass of 2-degenerate graphs. Connected non-regular subcubic graphs are 2-degenerate graphs with $\Delta = 3$. Recently Basavaraju and Chandran \cite{MBSC} proved that connected non-regular subcubic graph can be acyclically edge colored using $\Delta +1 = 4$ colors. Another two interesting subclasses of 2-degenerate graphs are \emph{planar graphs of girth 6} and \emph{circle graphs of girth 5} \cite{Ageev}. As far as we know, nothing much is known about the acyclic edge chromatic number of these graphs. In this paper, we prove the following theorem,

\begin{thm}
\label{thm:thm1}
Let $G$ be a 2-degenerate graph with maximum degree $\Delta$, then $a'(G)\le \Delta + 1$.
\end{thm}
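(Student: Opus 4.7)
The proof proceeds by induction on $|E(G)|$; the base case is trivial. For the inductive step, $2$-degeneracy of $G$ supplies a vertex $v$ with $d(v)\le 2$. Set $G'=G-v$, which is again $2$-degenerate with $\Delta(G')\le\Delta$, so by the inductive hypothesis $G'$ admits an acyclic edge coloring $c'$ using at most $\Delta+1$ colors. The whole difficulty is extending $c'$ across the (at most two) edges incident to $v$.

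The cases $d(v)\in\{0,1\}$ are immediate: a pendant edge lies in no cycle, and its unique endpoint has at least two missing colors. So assume $d(v)=2$ with neighbors $u_1,u_2$. Let $F(u_i)$ be the set of colors appearing at $u_i$ under $c'$ and $M_i=\{1,\dots,\Delta+1\}\setminus F(u_i)$, so $|M_i|\ge 2$. We want $\alpha\in M_1$, $\beta\in M_2$, $\alpha\ne\beta$, and assign $\alpha$ to $vu_1$, $\beta$ to $vu_2$. Any new bichromatic cycle must pass through $v$ and use colors $\alpha,\beta$, and hence must correspond to an $(\alpha,\beta)$-alternating $u_1$--$u_2$ path in $G'$ beginning with a $\beta$-edge at $u_1$ and ending with an $\alpha$-edge at $u_2$; in particular this requires $\beta\in F(u_1)$ and $\alpha\in F(u_2)$. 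Consequently, if $M_1\cap M_2\ne\emptyset$, picking $\alpha\in M_1\cap M_2$ and $\beta\in M_2\setminus\{\alpha\}$ succeeds immediately, since $u_2$ has no $\alpha$-edge to terminate an obstructing path.

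The genuine case is $M_1\cap M_2=\emptyset$, equivalently $M_1\subseteq F(u_2)$, $M_2\subseteq F(u_1)$, which (counting missing colors) forces $d(u_1)+d(u_2)\ge\Delta+3$. Here the plan is a Kempe-style recoloring: for carefully chosen $\alpha\in M_1$ and $\gamma\in M_2\cap M_x$, where $u_2x$ is the unique $\alpha$-edge at $u_2$ and $M_x$ denotes the colors missing at $x$, recolor $u_2x$ from $\alpha$ to $\gamma$. This moves $\alpha$ into the new $M_2$, reducing to the easy subcase, provided the swap introduces no bichromatic $(\gamma,\delta)$-cycle through $u_2x$ for any $\delta\in(F(u_2)\cap F(x))\setminus\{\alpha\}$. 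Symmetric swaps at $u_1$, or short sequences of swaps, should handle situations where a single swap is blocked.

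\textbf{Main obstacle.} Justifying the recoloring is the crux. With only $\Delta+1$ colors and $d(u_1)+d(u_2)\ge\Delta+3$, the set $M_2\cap M_x$ may be tiny, and each candidate $\gamma$ can be obstructed by an alternating path in $G'$. I expect the bulk of the argument to be a careful case analysis on the Kempe chains emanating from $u_1$, $u_2$, and $x$, exploiting the $2$-degenerate structure of $G'$ to locate a low-degree vertex inside an obstructing chain and break it. A strengthened inductive hypothesis --- asserting not just existence of an acyclic $(\Delta+1)$-edge-coloring of $G'$, but one whose missing-color patterns at designated vertices satisfy convenient intersection properties --- seems a natural device for making the hard subcase close cleanly.
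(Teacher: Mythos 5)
There is a genuine gap: your argument stops exactly where the theorem becomes hard. The cases $M_1\cap M_2\neq\emptyset$ (and $d(v)\le 1$) are indeed routine, but for the disjoint case you only offer a plan --- recolor the unique $\alpha$-edge $u_2x$ with some $\gamma\in M_2\cap M_x$, and if that is blocked, perform ``symmetric swaps at $u_1$, or short sequences of swaps'' --- without proving that any such swap sequence must succeed. Nothing in your sketch rules out the situation where every candidate $\gamma$ for every admissible swap is obstructed by an alternating path; with only $\Delta+1$ colors and $d(u_1)+d(u_2)\ge\Delta+3$ this is precisely the configuration one must defeat, and it is where essentially all of the work lies. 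The paper's proof of this theorem occupies many pages of critical-path analysis (Lemmas 2--24), and the final contradiction is obtained not by swaps at $u_1$, $u_2$, $x$ but by recolorings around a ``secondary pivot'' two and three steps away from the deleted edge; so the difficulty you defer is not a technical cleanup but the core of the result.

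A second, related problem is your choice of decomposition. You delete an \emph{arbitrary} degree-$2$ vertex $v$, so its neighbors $u_1,u_2$ may both have large degree and no exploitable structure nearby. The paper instead deletes a single edge $xy_i$ where $y_i$ has degree $2$ and $x$ is a ``primary pivot'' obtained from a two-level degeneracy peeling: $x$ has at most two neighbors of degree greater than $2$, and one of them ($q$) is again of low degree in the twice-peeled graph, which later yields the secondary pivot $p$. This careful selection is what makes the obstructing Kempe chains analyzable; with your arbitrary $v$ the promised ``low-degree vertex inside an obstructing chain'' need not exist anywhere you can reach. Finally, the suggested strengthened inductive hypothesis (prescribing missing-color patterns at designated vertices of $G'$) is not something you verify can be maintained, and the paper does not use such a device --- it works with a minimum counterexample and exploits the freedom to modify whichever valid coloring of the edge-deleted graph exists. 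As it stands, your proposal is a correct reduction to the hard case plus an unproven hope for that case, so it does not constitute a proof.
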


Our result is tight since there are 2-degenrate graphs which require $\Delta+1$ colors (e.g., cycle, non-regular subcubic graphs, etc.). Most of the work in this field has been nonconstructive, using probabilistic methods. In contrast, our proof is constructive and yields an efficient polynomial time algorithm. It is easy to see that its complexity is $O(\Delta n^2)$. (We have presented the proof in a non-algorithmic way. But it is easy to extract the underlying algorithm from it.)

\noindent \textbf{Remark: }It may be noted that though $a'(G) \le \Delta+1$ for 2-degenerate graphs, it is not so in general. In fact every $\Delta$-regular graph on $2n$ vertices with $\Delta > n$ requires at least $\Delta+2$ colors to be acyclically edge colored (See \cite{MBSCMK}).

\section{Preliminaries}

Let $G=(V,E)$ be a simple, finite and connected 2-degenerate graph of $n$ vertices and $m$ edges. Let $x \in V$. Then $N_{G}(x)$ will denote the neighbours of $x$ in $G$. For an edge $e \in E$, $G-e$ will denote the graph obtained by deletion of the edge $e$. For $x,y \in V$, when $e=(x,y)=xy$, we may use $G-\{xy\}$ instead of $G-e$. Let $c:E\rightarrow \{1,2,\ldots,k\}$ be an \emph{acyclic edge coloring} of $G$. For an edge $e\in E$, $c(e)$ will denote the color given to $e$ with respect to the coloring $c$. For $x,y \in V$, when $e=(x,y)=xy$ we may use $c(x,y)$ instead of $c(e)$. For $S \subseteq V$, we denote the induced subgraph on $S$ by $G[S]$.

~~~~~~

\noindent \textbf{Partial Coloring:} Let H be a subgraph of $G$. Then an acyclic edge coloring $c'$ of $H$ is also a partial coloring of $G$. Note that $H$ can be $G$ itself. Thus a coloring $c$ of $G$ itself can be considered a partial coloring. A coloring $c$ of $G$ is said to be a proper partial coloring if $c$ is proper. A proper partial coloring $c$ is called acyclic if there are no bichromatic cycles in the graph. Sometimes we also use the word valid coloring instead of acyclic coloring. Note that with respect to a partial coloring $c$, $c(e)$ may not be defined for an edge $e$. So, whenever we use $c(e)$, we are considering an edge $e$ for which $c(e)$ is defined, though we may not always explicitly mention it.

Let $c$ be a partial coloring of $G$. We denote the set of colors in the partial coloring $c$ by $C = \{1,2,\ldots,\Delta+1\}$. For any vertex $u \in V(G)$, we define $F_u(c) =\{c(u,z) \vert z \in N_{G}(u)\}$. For an edge $ab \in E$, we define $S_{ab}(c) = F_b - \{c(a,b)\}$. Note that $S_{ab}(c)$ need not be the same as $S_{ba}(c)$. We will abbreviate the notation to $F_u$ and $S_{ab}$ when the coloring $c$ is understood from the context.

To prove the main result, we plan to use contradiciton. Let $G$ be the minimum counter example for the statement in $Theorem$ \ref{thm:thm1}.  Let $G =(V,E)$ be a graph on $m$ edges where $m \ge 1$. We will remove an edge $e$ from $G$ and get a graph $G'=(V,E')$. By the minimality of $G$, the graph $G'$ will have an acyclic edge coloring $c:E'\rightarrow \{1,2,\ldots,\Delta +1\}$. Our intention will be to extend the coloring $c$ of $G'$ to $G$ by assigning an appropriate color for the edge $e$ thereby contrdicting the assumption that $G$ is a minimum couter example.

~~~~~

The following defintions arise out of our attempt to understand what may prevent us from extending a partial coloring of $G-e$ to $G$.

\noindent \textbf{Maximal bichromatic Path:} An ($\alpha$,$\beta$)-maximal bichromatic path with respect to a partial coloring $c$ of $G$ is a maximal path consisting of edges that are colored using the colors $\alpha$ and $\beta$ alternatingly. An ($\alpha$,$\beta$,$a$,$b$)-maximal bichromatic path is an ($\alpha$,$\beta$)-maximal bichromatic path which starts at the vertex $a$ with an edge colored $\alpha$ and ends at $b$. We emphasize that the edge of the ($\alpha$,$\beta$,$a$,$b$)-maximal bichromatic path incident on vertex $a$ is colored $\alpha$ and the edge incident on vertex $b$ can be colored either $\alpha$ or $\beta$. Thus the notations ($\alpha$,$\beta$,$a$,$b$) and ($\alpha$,$\beta$,$b$,$a$) have different meanings. Also note that any maximal bichromatic path will have at least two edges. The following fact is obvious from the definition of proper edge coloring:

\begin{fact}
\label{fact:fact1}
Given a pair of colors $\alpha$ and $\beta$ of a proper coloring $c$ of $G$, there can be at most one maximal ($\alpha$,$\beta$)-bichromatic path containing a particular vertex $v$, with respect to $c$.
\end{fact}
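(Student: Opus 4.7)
The plan is to observe that in a proper edge coloring, at every vertex $v$ there is at most one incident edge of each color, hence at most one edge colored $\alpha$ and at most one edge colored $\beta$. In particular, the total number of edges at $v$ that could possibly lie on an $(\alpha,\beta)$-bichromatic path is at most two.

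I would then argue that a maximal $(\alpha,\beta)$-bichromatic path through $v$ is in fact uniquely determined by $v$ (once $v$ lies on such a path at all). Starting from $v$, there are only two possible ways to leave $v$ along an edge colored $\alpha$ or $\beta$ (namely, the unique $\alpha$-edge at $v$, if any, and the unique $\beta$-edge at $v$, if any). From whichever endpoint is reached, the continuation of the bichromatic path is again forced: the next edge must use the other color from the pair $\{\alpha,\beta\}$, and by proper coloring there is at most one such edge at that vertex. Iterating, the whole walk in each direction from $v$ is deterministic until it reaches a vertex where no continuing edge of the required color exists; at that point maximality stops the extension.

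I would conclude by contradiction: suppose $P$ and $P'$ are two distinct maximal $(\alpha,\beta)$-bichromatic paths, both containing $v$. Each of them uses some subset of the at most two candidate edges at $v$. Since both are maximal and the forward/backward extensions from $v$ along each of those edges are forced, $P$ and $P'$ must consist of exactly the same sequence of vertices and edges, contradicting that they are distinct.

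The main obstacle, such as it is, is just being careful about the case where $v$ is an endpoint of the maximal bichromatic path (so only one of the two candidate edges at $v$ is used) versus the case where $v$ is an internal vertex (both candidate edges are used); in both cases the deterministic-extension argument gives uniqueness, so no real difficulty arises.
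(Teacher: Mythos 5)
Your argument is the natural one, and it is essentially what the paper has in mind: the paper offers no proof at all (it declares the fact ``obvious from the definition of proper edge coloring''), and the intended justification is exactly your observation that each vertex is incident to at most one edge of color $\alpha$ and at most one of color $\beta$, so extensions of an $(\alpha,\beta)$-alternating path are forced. Equivalently, in the subgraph $H$ spanned by the edges colored $\alpha$ or $\beta$ every vertex has degree at most $2$, and properness forces the alternation automatically along any path of $H$.

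There is, however, one step that does not go through as written: ``the whole walk in each direction from $v$ is deterministic \emph{until it reaches a vertex where no continuing edge of the required color exists}.'' If the component of $v$ in $H$ is a cycle --- which a merely proper (not acyclic) coloring allows --- the forced walk never reaches such a vertex; it closes on itself, and the maximal $(\alpha,\beta)$-alternating paths containing $v$ are then the cycle minus any one of its edges. These are several distinct maximal paths through $v$ (e.g.\ a $4$-cycle colored $\alpha,\beta,\alpha,\beta$ has two distinct maximal alternating paths through each vertex), so uniqueness --- and indeed the fact as literally stated for an arbitrary proper coloring --- fails in that case. Your closing remark that ``in both cases the deterministic-extension argument gives uniqueness, so no real difficulty arises'' glosses over exactly this possibility. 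To make the proof airtight you should note that the component of $v$ in $H$ is either a path or a cycle, that uniqueness holds precisely when it is a path, and that this is the situation in which the fact is invoked in the paper: there the colorings are either valid (hence have no $(\alpha,\beta)$-bichromatic cycle) or the relevant vertex misses one of the two colors, which forces its component to be a path. This is as much a looseness in the paper's statement as in your write-up, but a correct proof must either add that hypothesis or handle (i.e.\ exclude) the cycle case explicitly.
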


A color $\alpha \neq c(e)$ is a \emph{candidate} for an edge \emph{e} in $G$ with respect to a partial coloring $c$ of $G$ if none of the adjacent edges of \emph{e} are colored $\alpha$. A candidate color $\alpha$ is \emph{valid} for an edge \emph{e} if assigning the color $\alpha$ to \emph{e} does not result in any bichromatic cycle in $G$.

Let $e=(a,b)$ be an edge in $G$. Note that any color $\beta \notin F_a \cup F_b$ is a candidate color for the edge $ab$ in $G$ with respect to the partial coloring $c$ of $G$. But $\beta$ may not be valid. What may be the reason? It is clear that color $\beta$ is not $valid$ if and only if there exists $\alpha \neq \beta$ such that a ($\alpha$,$\beta$)-bichromatic cycle gets formed if we assign color $\beta$ to the edge $e$. In other words, if and only if, with respect to coloring $c$ of $G$ there existed a ($\alpha$,$\beta$,$a$,$b$) maximal bichromatic path with $\alpha$ being the color given to the first and last edge of this path. Such paths play an important role in our proof. We call them $critical\ paths$. It is formally defined below:

~~~~~~~~~

\noindent\textbf{Critical Path:} Let $ab \in E$ and $c$ be a partial coloring of $G$. Then a $(\alpha,\beta,$a$,$b$)$ maximal bichromatic path which starts out from the vertex $a$ via an edge colored  $\alpha$ and ends at the vertex $b$ via an edge colored $\alpha$ is called an $(\alpha,\beta,ab)$ critical path. Note that any critical path will be of odd length. Moreover the smallest length possible is three.

~~~~~~~~~

An obvious strategy to extend a valid partial coloring $c$ of $G$ would be to try to assign one of the candidate colors to an uncolored edge $e$. The condition that a candidate color being not valid for the edge $e$ is captured in the following fact.

\begin{fact}
\label{fact:fact2}
Let $c$ be a partial coloring of $G$. A candidate color $\beta$ is not $valid$ for the edge $e=(a,b)$ if and only if $\exists \alpha \in S_{ab} \cap S_{ba}$ such that there is a $(\alpha,\beta,ab)$  critical path in $G$ with respect to the coloring $c$.
\end{fact}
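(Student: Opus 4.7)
The plan is to prove Fact~\ref{fact:fact2} as a direct ``iff'' argument from the definitions, handling the easy direction first and then extracting a critical path from a hypothetical bichromatic cycle in the other direction.

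For the reverse direction (existence of a critical path implies invalidity), I would observe that an $(\alpha,\beta,ab)$ critical path $P$ has both of its extreme edges (incident to $a$ and to $b$) colored $\alpha$, while its interior alternates between $\alpha$ and $\beta$. Closing $P$ with the edge $ab$ and assigning it color $\beta$ produces a cycle whose edges alternate strictly between $\alpha$ and $\beta$; this is a bichromatic cycle, so $\beta$ is not valid.

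For the forward direction, I would suppose $\beta$ is a candidate but not valid, and let $C$ be a bichromatic cycle created by coloring $e = ab$ with $\beta$. The cycle $C$ must contain the edge $ab$ (otherwise $C$ would already be present in $c$, contradicting the acyclicity of the partial coloring). Let $\alpha$ be the second color appearing on $C$. Because $\beta$ is a candidate, no edge adjacent to $ab$ is colored $\beta$; in particular the two edges of $C$ meeting $a$ and $b$ respectively (other than $ab$) must both be colored $\alpha$. This places $\alpha \in F_a$ and $\alpha \in F_b$, and since $ab$ is uncolored in $c$ we have $F_a = S_{ba}$ and $F_b = S_{ab}$, yielding $\alpha \in S_{ab} \cap S_{ba}$ as required.

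It then remains to identify $C - \{ab\}$ with an $(\alpha,\beta,ab)$ critical path. The alternating structure is automatic from proper edge coloring: around a $\{\alpha,\beta\}$-bichromatic cycle the two colors must strictly alternate. The only substantive point is \emph{maximality} of the bichromatic path at its endpoints $a$ and $b$, and this is exactly where the ``candidate'' hypothesis is used: neither $a$ nor $b$ is incident to any $\beta$-edge in $c$, so the bichromatic path cannot be extended past either endpoint. I expect this maximality check to be the only place requiring care, but it is immediate, so no real obstacle is anticipated; Fact~\ref{fact:fact1} could be invoked to note that this maximal path is uniquely determined.
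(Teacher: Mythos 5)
Your proof is correct and follows essentially the same reasoning the paper itself uses: the paper states Fact~\ref{fact:fact2} without a formal proof, justifying it in the preceding prose by exactly this equivalence between a bichromatic cycle through $ab$ (after coloring $ab$ with $\beta$) and a pre-existing $(\alpha,\beta,ab)$ critical path, with the candidacy of $\beta$ guaranteeing maximality at $a$ and $b$. Your write-up just makes that sketch explicit, so there is nothing to correct.
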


~~~~~~~~~

If all the candidate colors turn out to be $invalid$, we try to $slightly\ modify$ the partial coloring $c$ in such a way that with respect to the modified coloring, one of the candidate colors becomes valid. An obvious way to modify is to recolor an edge so that some critical paths are $broken$ and a candidate color becomes valid. Sometimes we resort to a slightly more sophisticated strategy to modify the coloring namely $color\ exchange$ defined below:

~~~~~~~

\noindent \textbf{Color Exchange:} Let $c$ be a partial coloring of $G$. Let $u,i,j \in V(G)$ and $ui,uj \in E(G)$. We define $Color\ Exchange$ with respect to the edge $ui$ and $uj$, as the modification of the current partial coloring $c$ by exchanging the colors of the edges $ui$ and $uj$ to get a partial coloring $c'$, i.e., $c'(u,i)=c(u,j)$, $c'(u,j)=c(u,i)$ and $c'(e)=c(e)$ for all other edges $e$ in $G$. The color exchange with respect to the edges $ui$ and $uj$ is said to be proper if the coloring obtained after the exchange is proper. The color exchange with respect to the edges $ui$ and $uj$ is $valid$ if and only if the coloring obtained after the exchange is acyclic. The following fact is obvious:

\begin{fact}
\label{fact:fact3}
Let $c'$ be the partial coloring obtained from a valid partial coloring $c$ by the color exchange with respect to the edges $ui$ and $uj$. Then the partial coloring $c'$ will be proper if and only if $c(u,i) \notin S_{uj}$ and $c(u,j) \notin S_{ui}$.
\end{fact}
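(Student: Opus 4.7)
The plan is to prove Fact 3 by a direct case analysis at each vertex incident to the two edges whose colors are exchanged, since these are the only edges whose color changes under the operation. Since $c$ was proper to begin with, any violation of properness in $c'$ must involve one of the edges $ui$ or $uj$, whose endpoints are $u$, $i$, and $j$. So I will check properness separately at $u$, at $i$, and at $j$.

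First I would handle the vertex $u$. Under the exchange, $c'(u,i) = c(u,j)$ and $c'(u,j) = c(u,i)$; all other edges incident to $u$ retain their $c$-colors. Because $c$ was proper at $u$, the colors $c(u,i)$ and $c(u,j)$ are distinct, and no other edge at $u$ carries either of these colors. Hence the multiset of colors at $u$ under $c'$ is the same as under $c$, so properness at $u$ is automatic and imposes no condition.

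Next I would examine vertex $i$. The only edge at $i$ whose color changes is $ui$, whose new color is $c(u,j)$. Properness at $i$ under $c'$ thus fails iff $c(u,j)$ equals the color of some other edge incident to $i$, i.e., iff $c(u,j) \in F_i \setminus \{c(u,i)\} = S_{ui}(c)$. So the necessary and sufficient condition for properness at $i$ is $c(u,j) \notin S_{ui}$. By the same argument applied at $j$, the condition for properness at $j$ is $c(u,i) \notin S_{uj}$.

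Combining the three vertex checks gives that $c'$ is proper iff both $c(u,i) \notin S_{uj}$ and $c(u,j) \notin S_{ui}$ hold, which is exactly the statement of the Fact. The one subtlety is being careful with the asymmetric definition $S_{ab} = F_b \setminus \{c(a,b)\}$, so that the ``other'' color at $i$ is correctly encoded by $S_{ui}$ (not $S_{iu}$); once this is pinned down, both directions of the biconditional fall out simultaneously and no further argument is needed.
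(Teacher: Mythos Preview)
Your proof is correct; the paper states Fact~\ref{fact:fact3} as ``obvious'' and gives no proof, and your vertex-by-vertex check at $u$, $i$, and $j$ is exactly the natural argument one would supply. The care you take with the asymmetric definition $S_{ab} = F_b \setminus \{c(a,b)\}$ is appropriate and the reasoning is complete.
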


~~~~~~

\noindent The color exchange is useful in breaking some critical paths as is clear from the following lemma:

\begin{lem}
\label{lem:lem1}
Let $u,i,j,a,b \in V(G)$, $ui,uj,ab \in E$. Also let $\{\lambda,\xi\} \in C$ such that $\{\lambda,\xi\} \cap \{c(u,i),c(u,j)\} \neq \emptyset$ and $\{i,j\} \cap \{a,b\} = \emptyset$. Suppose there exists an ($\lambda$,$\xi$,$ab$)-critical path that contains vertex $u$, with respect to a valid partial coloring $c$ of $G$. Let $c'$ be the partial coloring obtained from $c$ by the color exchange with respect to the edges $ui$ and $uj$. If $c'$ is proper, then there will not be any ($\lambda$,$\xi$,$ab$)-critical path in $G$ with respect to the partial coloring $c'$.
\end{lem}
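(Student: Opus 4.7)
The plan is a proof by contradiction: assume a $(\lambda,\xi,ab)$-critical path $P'$ exists in $c'$. Write $P: a = x_0, x_1, \ldots, x_l = b$ for the given critical path in $c$ through $u$. Because $c$ is proper, each of $\lambda$ and $\xi$ has at most one edge at $u$; moreover, if $u$ is an internal vertex of $P$ then both the $\lambda$- and $\xi$-edges at $u$ must belong to $P$, while if $u$ is an endpoint of $P$ then by maximality of $P$ only the $P$-edge is a $\{\lambda,\xi\}$-edge at $u$ (the other color must be absent at $u$). Since $\{c(u,i),c(u,j)\}\cap\{\lambda,\xi\}\neq\emptyset$, at least one of $ui,uj$ is the unique edge of its color at $u$, and therefore must be a $\{\lambda,\xi\}$-edge of $P$ whenever the corresponding color is actually present at $u$ on $P$.

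I would first dispose of the subcase $\{c(u,i),c(u,j)\}=\{\lambda,\xi\}$. If $u$ is an internal vertex of $P$, the two edges of $P$ at $u$ are forced to be exactly $ui$ and $uj$; the exchange swaps their colors, and the next $P$-edge past either $i$ or $j$ (which exists because $i,j \notin \{a,b\}$) retains its color and hence collides with the re-colored edge at $u$, violating properness of $c'$. If $u$ is an endpoint of $P$, one of $ui,uj$ is forced to carry the color that maximality of $P$ already forbids at $u$, a contradiction already in $c$. This leaves the subcase in which exactly one of $c(u,i),c(u,j)$ lies in $\{\lambda,\xi\}$; WLOG $c(u,i) \in \{\lambda,\xi\}$ and $c(u,j) \notin \{\lambda,\xi\}$. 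In $c'$ the $c(u,i)$-colored edge at $u$ has moved from $ui$ to $uj$, while any other $\{\lambda,\xi\}$-edge at $u$ is unchanged.

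In this surviving subcase I would split on whether $P'$ contains $u$. If $P'$ does contain $u$, it must use the new $c(u,i)$-colored edge $uj$; a local check shows $j$ cannot lie on $P$ (otherwise $j$ would acquire two edges of color $c(u,i)$ in $c'$, contradicting properness), so $j \notin P$. Then the maximal $(\lambda,\xi)$-bichromatic path $Q_j$ of $c$ through $j$ is distinct from $P$, so by Fact~\ref{fact:fact1} applied in $c$ to the vertices $a$ and $b$, $Q_j$ contains neither $a$ nor $b$. But the portion of $P'$ on the $j$-side of $u$ coincides with (the initial half of) $Q_j$, since the only edges changing color between $c$ and $c'$ are $ui,uj$; hence $P'$ cannot terminate at $b$, a contradiction. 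If instead $P'$ avoids $u$, a similar clash argument forces $P'$ to avoid $i$ and $j$ as internal vertices (an internal occurrence at $i$ would demand a missing $c(u,i)$-edge, and at $j$ would create two $c(u,i)$-edges), so $P'$ is an unchanged maximal bichromatic path in $c$ from $a$ to $b$; Fact~\ref{fact:fact1} then gives $P'=P$, contradicting $u\in P\setminus P'$. The main obstacle is the bookkeeping in this last subcase, where careful local analyses at $u$, $i$, and $j$, together with the uniqueness statement of Fact~\ref{fact:fact1}, are needed to rule out every possible shape of $P'$.
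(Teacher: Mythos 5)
Your proof is correct, but in the main case it takes a different route from the paper's. You handle the subcase $\{c(u,i),c(u,j)\}=\{\lambda,\xi\}$ essentially as the paper does (the paper phrases the clash via Fact~\ref{fact:fact3}, you phrase it as a properness violation in $c'$ at the internal vertex $i$ or $j$, plus the easy endpoint observation). For the remaining case, with $\gamma=c(u,i)\in\{\lambda,\xi\}$ the paper argues directly and positively: it orients the old critical path $P$ so that $i$ precedes $u$, notes that after the exchange no $\gamma$-colored edge remains at $i$, and concludes that the unique maximal $(\lambda,\xi)$-path through $a$ in $c'$ is the unchanged initial segment of $P$, now curtailed at $i\neq b$; Fact~\ref{fact:fact1} finishes it in one stroke. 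You instead argue by contradiction on a hypothetical critical path $P'$ in $c'$, splitting on whether $P'$ contains $u$: if it does, it is forced through $uj$, you show $j\notin P$, and the tail of $P'$ beyond $j$ lives inside the $c$-maximal path $Q_j$ through $j$, which by Fact~\ref{fact:fact1} avoids both $a$ and $b$; if it does not, $P'$ avoids $i$ and $j$ as well, is color-unchanged and still maximal in $c$ (its endpoints $a,b$ are disjoint from $\{u,i,j\}$), so Fact~\ref{fact:fact1} forces $P'=P$, contradicting $u\in P\setminus P'$. Both arguments hinge on the same locality observation (only $ui,uj$ change) and on Fact~\ref{fact:fact1}; the paper's version is shorter because it tracks one path and one terminating vertex, while yours costs an extra case analysis and the auxiliary path $Q_j$ but never needs to choose an orientation of $P$ or discuss which of $i$, $u$ is met first. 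Two small glosses in your write-up are worth tightening but are not gaps: when $u$ is an endpoint of $P'$ and $\gamma=\xi$, the correct conclusion is that maximality of $P'$ is violated outright (rather than that $uj\in P'$), and in Case~A the tail of $P'$ beyond $j$ is merely contained in $Q_j$, not necessarily equal to half of it; the containment is all your argument uses.
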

\begin{proof}
Firstly, $\{\lambda,\xi\} \neq \{c(u,i),c(u,j)\}$. This is because, if there is a ($\lambda$,$\xi$,$ab$)-critical path that contains vertex $u$, with respect to a valid partial coloring $c$ of $G$, then it has to contain the edge $ui$ and $uj$. Since $i \notin \{a,b\}$, vertex $i$ is an internal vertex of the critical path which implies that both the colors $\lambda$ and $\xi$ (that is $c(u,i)$ and $c(u,j)$) are present at vertex $i$. That means $c(u,j) \in S_{ui}$ and this contradicts $Fact$ \ref{fact:fact3}, since we are assuming that the color exchange is proper. Thus $\{\lambda,\xi\} \neq \{c(u,i),c(u,j)\}$.

Now let $P$ be the ($\lambda$,$\xi$,$ab$) critical path with respect to the coloring $c$. Without loss of generality assume that $\gamma = c(u,i) \in \{\lambda,\xi\}$. Since vertex $u$ is contained in path $P$, by the maximality of the path $P$, it should contain the edge $ui$ since $c(u,i)=\gamma \in \{\lambda,\xi\}$. Let us assume without loss of generality that path $P$ starts at vertex $a$ and reaches vertex $i$ before it reaches vertex $u$. Now after the color exchange with respect to the edges $ui$ and $uj$, i.e., with respect to the coloring $c'$, there will not be any edge adjacent to vertex $i$ that is colored $\gamma$. So if any ($\lambda$,$\xi$) maximal bichromatic path starts at vertex $a$, then it has to end at vertex $i$. Since $i \neq b$, by $Fact$ \ref{fact:fact1} we infer that the ($\lambda$,$\xi$,$ab$) critical path does not exist.
\renewcommand{\qedsymbol}{\bqed}
\end{proof}

\section{Proof of Theorem 1}

\begin{proof}
We prove the theorem by way of contradiction. Let $G$ be a 2-degenerate graph with $n$ vertices and $m$ edges which is a minimum counter example for the theorem statement. Then the theorem is true for all 2-degenerate graphs with at most $m-1$ edges. To prove the theorem for $G$, we may assume that $G$ is connected. We may also assume that the minimum degree, $\delta(G)\ge 2$, since otherwise if there is a vertex $v$, with $degree(v)= 1$, we can easily extend the acyclic edge coloring of $G-e$ (where e is the edge incident on $v$) to $G$. Keeping the assumption that $G$ is a minimum counter example in mind we will show that any partial coloring $c$ of $G$ should satisfy certain properties which in turn will lead to a contradiction.

~~~~~

\textbf{Selection of the Primary Pivot:} Let $W_0 = \{z \in V(G)\ \vert\ degree_{G}(z)=2\}$. Since $G$ is 2-degenerate $W_0 \neq \emptyset$. We may assume that $V-W_0 \neq \emptyset$ because otherwise, $G$ is a cycle and it is easy to see that it is $\Delta+1 = 3$ acyclically edge colorable. Thus $\Delta(G) \ge 3$. Let $G'=G[V-W_0]$ and $W_1 = \{z \in V(G')\ \vert\ degree_{G'}(z) \le 2\}$. By the definition of 2-degeneracy there exists at least one vertex of degree at most 2 in $G'$ and thus $W_1 \neq \emptyset$.

Let $V' = V(G')$. If $V'-W_1 \neq \emptyset$, then there exists at least one vertex of degree at most 2 in $G'[V'-W_1]$. Let $G''=G[V'-W_1]$ and $W_2 = \{z \in V(G'')\ \vert\ degree_{G''}(z) \le 2\}$. Let $q \in W_2$. Clearly $N_{G}(q) \cap W_1 \neq \emptyset$ and let $x \in N_{G}(q) \cap W_1$. On the other hand if $V'-W_1 = \emptyset$, then let $x \in W_1$. We call $x$ the $Primary\ Pivot$, since $x$ plays an important role in our proof. Let $N'_{G}(x) = N_{G}(x) \cap W_0$ and $N''_{G}(x) = N_{G}(x) - N'_{G}(x)$ = $N_{G'}(x)$. Since $x \in W_1$, it is easy to see that $\vert N''_{G}(x) \vert \le 2$ and $\vert N'_{G}(x) \vert \ge 1$.

~~~~~~

Let $N'_{G}(x) = \{y_1,\ y_2,\ldots, y_t\}$. Also $\forall y_i$, let $N_{G}(y_i)=\{x,y'_i\}$ (See $figure$ \ref{fig:fig1}). $\forall y_i$, let $G_i$ denote the graph obtained by removing the edge $(x,y_i)$ from the graph $G$. Let $N'_{G_i}(x) = N'_{G}(x) - \{y_i\}$ and $N''_{G_i}(x) = N_{G_i}(x) - N'_{G_i}(x)$. By induction on the number of edges, graph $G_i$ is $\Delta+1$ acyclically edge colorable. Let $c_i$ be a valid coloring of $G_i$ and thus a partial coloring of $G$. We denote the set of colors by $C = \{1,2,\ldots,\Delta+1\}$.

~~~~~

\begin{figure}[!h]
\begin{center}
\includegraphics[width= 80 mm, height = 60 mm]{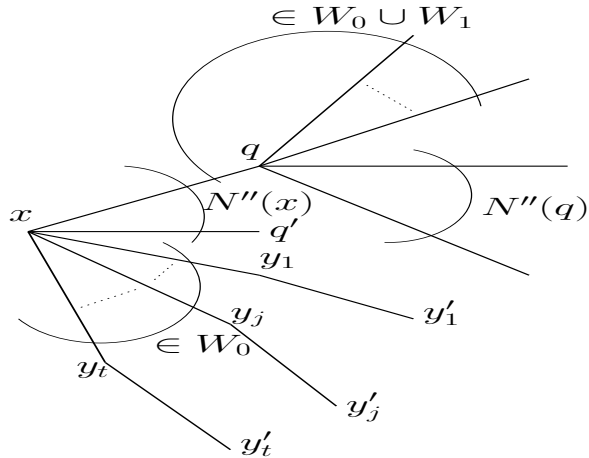}
\caption{Vertex $x$ and its neighbours}
\label{fig:fig1}
\end{center}
\end{figure}

\textbf{Comment:} Note that the figures given in this paper are only for providing visual aid for the reader. They do not capture all possible configurations.

~~~~~~~~

\subsection{Properties of any valid coloring $c_i$ of $G_i$}

~~~~~~~

Let $F_x(c_i)=\{c_i(x,z) \vert z \in N_{G_i}(x)\}$. Let $F'_{x}(c_i) =\{c_i(x,z) \vert z \in N'_{G_i}(x)\}$ and $F''_{x}(c_i) =\{c_i(x,z) \vert z \in N''_{G_i}(x)\}$. Note that $F_{x}(c_i)$ is the disjoint union of $F'_{x}(c_i)$ and $F''_{x}(c_i)$ and also $\vert F''_{x}(c_i) \vert \le 2$.

~~~~~

\begin{lem}
\label{lem:lem2}
With respect to any valid coloring $c_i$ of $G_i$, $c_i(y_i,y'_i) \in F''_{x}(c_i)$.
\end{lem}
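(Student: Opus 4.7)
The plan is to argue by contradiction: suppose $c_i(y_i,y'_i) \notin F''_x(c_i)$, and then extend $c_i$ to a valid acyclic edge coloring of $G$ by choosing a color for the uncolored edge $xy_i$. Any such extension contradicts the minimality of the counterexample $G$.

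First I would count candidate colors for $xy_i$. Because $y_i$ has degree $2$ in $G$ with $N_G(y_i) = \{x, y'_i\}$, we have $F_{y_i}(c_i) = \{c_i(y_i, y'_i)\}$. Also $|F_x(c_i)| = \deg_G(x) - 1 \le \Delta - 1$. Therefore the set of colors forbidden by properness, $F_x(c_i) \cup F_{y_i}(c_i)$, has size at most $\Delta$, leaving at least one candidate out of $\Delta + 1$. By Fact~\ref{fact:fact2}, a candidate $\beta$ fails to be valid if and only if some $\alpha \in S_{xy_i} \cap S_{y_ix} = \{c_i(y_i, y'_i)\} \cap F_x(c_i)$ admits an $(\alpha, \beta, xy_i)$-critical path.

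I would split into two cases on whether $c_i(y_i, y'_i) \in F_x(c_i)$. If not, then $S_{xy_i} \cap S_{y_ix} = \emptyset$, so no $\alpha$ exists and every candidate is valid---contradiction. Otherwise, since by assumption $c_i(y_i, y'_i) \notin F''_x(c_i)$, the color $c_i(y_i, y'_i)$ must lie in $F'_x(c_i)$, hence equals $c_i(x, y_j)$ for a unique $j \ne i$. Now $c_i(y_i,y'_i)$ is already counted inside $F_x(c_i)$, so the number of candidates is $\Delta + 1 - |F_x(c_i)| = \Delta + 2 - \deg_G(x) \ge 2$.

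The main obstacle is to show, in this second case, that at most one candidate can be invalid. With $\alpha = c_i(x, y_j)$ forced, any $(\alpha, \beta, xy_i)$-critical path must begin at $x$ via the edge $xy_j$. Since $y_j$ also has degree $2$ in $G$ with $N_G(y_j) = \{x, y'_j\}$, the path can continue from $y_j$ only along $y_j y'_j$, which forces $\beta = c_i(y_j, y'_j)$. Thus at most this single candidate can be invalid, and the surplus of candidates guarantees a valid choice, giving the desired contradiction. The whole argument hinges on this degree-$2$ constraint on $y_j$: it is what pins the invalid color to a unique value and makes the counting work, without needing any color exchange or appeal to Lemma~\ref{lem:lem1}.
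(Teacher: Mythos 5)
Your proposal is correct and follows essentially the same route as the paper: first rule out $c_i(y_i,y'_i)\notin F_x(c_i)$ (where every candidate for $xy_i$ is automatically valid), then in the case $c_i(y_i,y'_i)=c_i(x,y_j)\in F'_x(c_i)$ count at least two candidates and use the degree-$2$ constraint on $y_j$ to pin the only possibly invalid candidate down to $c_i(y_j,y'_j)$. Phrasing the obstruction via Fact~\ref{fact:fact2} and critical paths rather than directly via bichromatic cycles is only a cosmetic difference.
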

\begin{proof}
\renewcommand{\qedsymbol}{\bqed}
It is easy to see that $c_i(y_i,y'_i) \in F_{x}(c_i)$. Otherwise all the candidate colors are valid for the edge $xy_i$, since any cycle involving the edge $xy_i$ will contain  the edge $y_iy'_i$ as well as an edge incident on $x$ in $G_i$ and thus the cycle will have at least 3 colors. Now if $c_i(y_i,y'_i) \in  F'_{x}(c_i)$ we have $\vert F_{x}(c_i) \cup \{c_i(y_i,y'_i)\}\vert \le \Delta-1$. Thus there are at least two \emph{candidate colors} for the edge $xy_i$. Let $y_j \in N'_{G_i}(x)$ be the vertex such that $c_i(y_i,y'_i)=c_i(x,y_j)$. When we color edge $xy_i$ there is a possibility of a bichromatic cycle only if we assign $c_i(y_j,y'_j)$ to the edge $xy_i$ since $degree_{G_i}(y_j) = 2$. But since we have at least two \emph{candidate colors} for edge $xy_i$, this situation can easily be avoided. We infer that $c_i(y_i,y'_i) \in F''_{x}(c_i)$.
\end{proof}

~~~~~

\begin{lem}
\label{lem:lem3}
With respect to any valid coloring $c_i$ of $G_i$, $\vert F''_{x}(c_i) \vert = 2$
\end{lem}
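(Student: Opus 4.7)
My plan is to argue by contradiction against the minimality of $G$. Since $c_i$ is a proper edge coloring, $|F''_x(c_i)| = |N''_{G_i}(x)| = |N''_G(x)|$, and this quantity is at most $2$ by the way $x$ was chosen. It is also at least $1$: in the subcase $V'-W_1\ne\emptyset$ this is forced by the vertex $q\in W_2$ being a neighbour of $x$ outside $W_0$, while in the subcase $V'-W_1=\emptyset$ it follows from Lemma~\ref{lem:lem2}, whose conclusion $c_i(y_i,y'_i)\in F''_x(c_i)$ would be vacuous if $F''_x$ were empty (and the proof of Lemma~\ref{lem:lem2} already produces a valid extension in that case). So it suffices to rule out $|N''_G(x)|=1$. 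Assume for contradiction $N''_G(x)=\{u\}$; Lemma~\ref{lem:lem2} then forces $c_i(y_i,y'_i)=c_i(x,u)$.

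To obtain the contradiction I will extend $c_i$ to a valid acyclic coloring of $G$ by assigning a color to $xy_i$, after at most one local modification of $c_i$. Because $\deg_G(y_i)=2$, we have $F_{y_i}(c_i)=\{c_i(x,u)\}$, and Fact~\ref{fact:fact2} says that a candidate color $\beta\in C\setminus F_x(c_i)$ is invalid only when a $(c_i(x,u),\beta)$-maximal bichromatic path runs $x\to u\to\cdots\to y'_i\to y_i$; in particular every invalid candidate lies in $F_u(c_i)$. There are $\Delta+2-\deg_G(x)\ge 2$ candidate colors. Case A: if some candidate $\beta\notin F_u$ exists, I simply color $xy_i=\beta$ and the extension is valid.

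Case B: every candidate lies in $F_u$, which forces $F_x(c_i)\cup F_u(c_i)=C$. A size count then gives $|F_x\setminus F_u|=\Delta+1-|F_u|\ge 1$, and because $c_i(x,u)\in F_x\cap F_u$ the surplus color must appear on some edge $xy_j$ with $j\ne i$. I perform a color exchange between $xu$ and $xy_j$. By Fact~\ref{fact:fact3} the exchange is proper provided $c_i(x,y_j)\notin F_u$ (built into the choice of $j$) and $c_i(x,u)\notin F_{y_j}=\{c_i(x,y_j),c_i(y_j,y'_j)\}$; the first condition is immediate, and the second amounts to $c_i(y_j,y'_j)\ne c_i(x,u)$, which I will arrange using the fact that the edges colored $c_i(x,u)$ in $G$ form a matching, so the choice of $y_j$ within $F_x\setminus F_u$ can avoid that color class. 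Lemma~\ref{lem:lem1}, applied at $x$ with exchange pair $(xu,xy_j)$, then kills every previously obstructing $(c_i(x,u),\beta,xy_i)$-critical path. In the new coloring $c'_i$ the color $c_i(x,u)$ at $x$ now sits on $xy_j$, so any surviving $(c_i(x,u),\beta)$-critical path from $x$ to $y_i$ would have to begin along $xy_j$ and continue through $y'_j$, forcing $\beta=c_i(y_j,y'_j)$. Hence at most one of the (at least two) candidates remains invalid, and I color $xy_i$ with a surviving valid $\beta$.

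The main obstacle is Case B, and it lives in two places. First, I must guarantee the existence of a suitable exchange partner $y_j$ with both $c_i(x,y_j)\in F_x\setminus F_u$ and $c_i(y_j,y'_j)\ne c_i(x,u)$; this reduces to a matching argument on the color class $c_i(x,u)$ combined with the count $|F_x\setminus F_u|\ge 1$. Second, I must rule out that the color exchange itself creates a new bichromatic cycle through the rewritten edges $xu$ and $xy_j$; this is settled by a second invocation of Lemma~\ref{lem:lem1} on any newly created critical path (or, in the stubborn sub-case where $F_x\cup F_{y'_i}=C$ prevents a clean exchange, by an auxiliary recoloring of $y_iy'_i$, whose validity is automatic because $y_i$ has no other colored edge). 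This bookkeeping, routine in style but delicate in detail, is where the bulk of the technical effort sits; once it is in place, both cases produce a valid extension of $c_i$ to $G$, contradicting the minimality of $G$ and forcing $|F''_x(c_i)|=2$.
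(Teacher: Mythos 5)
Your reduction (if $|F''_x(c_i)|\le 1$ then some valid $c_i$ extends to $G$, contradicting minimality) is a legitimate framing, and Case A is fine, but Case B contains genuine gaps. First, the existence of a proper exchange partner is not established: you need $y_j$ with $c_i(x,y_j)\in F_x\setminus F_u$ \emph{and} $c_i(y_j,y'_j)\neq c_i(x,u)$, but $F_x\setminus F_u$ may consist of a single color, so there may be exactly one eligible $y_j$, and nothing prevents $c_i(y_j,y'_j)=c_i(x,u)$. The ``matching'' observation does not help: $y_jy'_j$ and $xu$ are (in general) vertex-disjoint edges, so both may well carry the color $c_i(x,u)$, and then Fact~\ref{fact:fact3} fails for every available choice of $j$. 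Second, even when the exchange is proper, Fact~\ref{fact:fact3} gives only properness and Lemma~\ref{lem:lem1} only destroys the old $(c_i(x,u),\beta,xy_i)$ critical paths; neither rules out a \emph{new} bichromatic cycle created by the exchange itself before $xy_i$ is even colored, e.g.\ a cycle through $xy_j$ (now colored $c_i(x,u)$), $y_jy'_j$, and another edge of $F_x$ colored $c_i(y_j,y'_j)$. Your appeal to ``a second invocation of Lemma~\ref{lem:lem1}'' does not address this, since that lemma says nothing about cycles arising in the recolored graph; in the paper, every use of color exchange is accompanied by an ad hoc validity argument for exactly this reason.

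Ironically, your fallback move is essentially the paper's entire proof, and it makes Case B (and the exchange machinery) unnecessary. Since $|F_{y'_i}(c_i)|\le\Delta$, there is a candidate color $\gamma$ for $y_iy'_i$; recoloring $y_iy'_i$ with $\gamma$ is automatically valid because $y_i$ is pendant in $G_i$, so the result $c'_i$ is again a valid coloring of $G_i$ and Lemma~\ref{lem:lem2} applies to it, forcing $c'_i(y_i,y'_i)=\gamma\in F''_x(c_i)$. But $\gamma\neq c_i(y_i,y'_i)$, and both colors would have to lie in the supposed singleton $F''_x(c_i)$ --- an immediate contradiction, with no extension to $G$ and no case analysis. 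If you prefer to keep your ``extend to $G$'' framing, the same recoloring works, but you must then finish it: the new color $\gamma$ lies outside $F''_x$, so either $\gamma\notin F_x$ (every candidate for $xy_i$ is valid) or $\gamma\in F'_x$ (at most one of the at least two candidates is invalid, exactly as in the proof of Lemma~\ref{lem:lem2}). In your write-up this concluding step is absent, and the fallback is attached to an unrelated trigger condition ($F_x\cup F_{y'_i}=C$) rather than to the failure of the exchange, so as it stands the argument does not close.
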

\begin{proof}
\renewcommand{\qedsymbol}{\bqed}
Suppose not. Then $\vert F''_{x}(c_i)\vert \le 1$. Since $ \vert F_{y'_i}(c_i) \vert \le \Delta$, we have at least one \emph{candidate color} for the edge $y_iy'_i$. Note that any \emph{candidate color}, is $valid$ for the edge $y_iy'_i$ in $G_i$ since $y_i$ is a pendant vertex in $G_i$. Let $c'_i$ be the valid coloring obtained by recoloring the edge $y_iy'_i$ with a candidate color. By $Lemma$ \ref{lem:lem2}, we have $c_i(y_i,y'_i) \in F''_{x}(c_i)$. Clearly since $\vert F''_{x}(c_i)\vert \le 1$ and $c'_i(y_i,y'_i) \neq c_i(y_i,y'_i)$, we can infer that $c'_i(y_i,y'_i) \notin F''_{x}(c_i)$, a contradiction to $Lemma$ \ref{lem:lem2}.
\end{proof}

~~~~~

\noindent An immediate consequence of $Lemma$ \ref{lem:lem3} is that $\vert N''_{G}(x) \vert = 2$. Moreover by the way we have selected vertex $x$ at least one of them should belong to $W_1 \cup W_2$. We make the following assumption:

\begin{asm}
\label{asm:asm1}
With respect to any valid coloring $c_i$ of $G_i$, without loss of generality let $F''_{x}(c_i) = \{1,2\}$ and $N''_{G}(x)=\{q,q'\}$. Thus $\{c_i(x,q),c_i(x,q')\}= \{1,2\}$. Also without loss of generality we assume that $q \in W_2 \cup W_1$ (see $figure$ \ref{fig:fig1}).
\end{asm}

~~~~~~

\begin{lem}
\label{lem:lem4}
With respect to any valid coloring $c_i$ of $G_i$, colors $1,\ 2 \notin S_{y_iy'_i}$.
\end{lem}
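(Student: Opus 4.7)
The plan is to derive a contradiction against Lemma~\ref{lem:lem2} applied to a slightly modified coloring, in the same spirit as the proof of Lemma~\ref{lem:lem3}. First I would fix notation: by Lemma~\ref{lem:lem2} we have $c_i(y_i,y'_i)\in F''_x(c_i)=\{1,2\}$, so without loss of generality assume $c_i(y_i,y'_i)=1$. Then $1\notin S_{y_iy'_i}$ holds automatically from the definition $S_{y_iy'_i}=F_{y'_i}-\{c_i(y_i,y'_i)\}$. Thus it suffices to rule out $2\in S_{y_iy'_i}$.

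Suppose for contradiction that $2\in S_{y_iy'_i}$. Then $\{1,2\}\subseteq F_{y'_i}$. Since $y_i$ is a pendant vertex in $G_i$ (its only neighbour in $G_i$ is $y'_i$), the candidate colors for the edge $y_iy'_i$ are exactly $C\setminus F_{y'_i}$. Because $|C|=\Delta+1$ and $|F_{y'_i}|\le \deg_{G}(y'_i)\le \Delta$, there is at least one candidate color $\alpha$, and necessarily $\alpha\notin\{1,2\}$ since $\{1,2\}\subseteq F_{y'_i}$. Moreover, any candidate color is automatically valid for $y_iy'_i$: since $y_i$ is a pendant in $G_i$, no cycle of $G_i$ contains the edge $y_iy'_i$, so recoloring it cannot create any bichromatic cycle.

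Let $c'_i$ be the valid coloring of $G_i$ obtained from $c_i$ by recoloring $y_iy'_i$ with $\alpha$. Recoloring this edge does not alter any edge incident on $x$, so $F''_x(c'_i)=F''_x(c_i)=\{1,2\}$. But $c'_i(y_i,y'_i)=\alpha\notin\{1,2\}=F''_x(c'_i)$, which contradicts Lemma~\ref{lem:lem2} applied to the valid coloring $c'_i$. Hence $2\notin S_{y_iy'_i}$, completing the proof.

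The argument is essentially routine once Lemmas~\ref{lem:lem2} and~\ref{lem:lem3} are in place; the only point to be careful about is confirming that any candidate color for $y_iy'_i$ is in fact valid (not just proper), which is immediate from the pendant status of $y_i$ in $G_i$, and that the existence of a candidate outside $\{1,2\}$ is guaranteed by the palette size $\Delta+1$ together with the standing assumption $\{1,2\}\subseteq F_{y'_i}$. There is no real obstacle beyond these bookkeeping checks.
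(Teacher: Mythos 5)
Your proposal is correct and is essentially the paper's own argument: both recolor the edge $y_iy'_i$ with a candidate color (valid because $y_i$ is pendant in $G_i$) and then invoke Lemma~\ref{lem:lem2} on the modified coloring, using that the recoloring leaves $F''_x$ untouched. The only difference is presentational — you argue by contradiction after the WLOG $c_i(y_i,y'_i)=1$, whereas the paper argues directly that the candidate must be the other color of $\{1,2\}$ and hence both colors are absent from $S_{y_iy'_i}$.
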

\begin{proof}
\renewcommand{\qedsymbol}{\bqed}
Since $ \vert F_{y'_i}(c_i) \vert \le \Delta$, we have at least one \emph{candidate color} $\gamma \neq c_i(y_i,y'_i)$ for the edge $y_iy'_i$. Note that $\gamma$ is $valid$ for the edge $y_iy'_i$ in $G_i$ since $y_i$ is a pendant vertex in $G_i$. Let $c'_i$ be the valid coloring obtained by recoloring the edge $y_iy'_i$ with $\gamma$. Now since $c_i$ as well as $c'_i$ are valid, by $Lemma$ \ref{lem:lem2}, we have $\{c_i(y_i,y'_i),c'_i(y_i,y'_i)\}=F''_{x}(c_i)=\{1,2\}$ (by $Assumption$ \ref{asm:asm1}). Since $c_i(y_i,y'_i) \notin S_{y_iy'_i}$ and $c'_i(y_i,y'_i) \notin S_{y_iy'_i}$, we have $1,\ 2 \notin S_{y_iy'_i}$.
\end{proof}

~~~~~~~~~

\noindent Let $C' = C- \{1,2\}$. For each color $\gamma \in C'$, we define a graph $G_{i,\gamma}$ as below:
\begin{displaymath}
G_{i,\gamma} = \left\{ \begin{array}{ll}
 G_i & \textrm{if $\gamma \in C'-F'_{x}(c_i)$}\\
 G_i-{xy_a},\ where\ c_i(x,y_a)=\gamma & \textrm{if $\gamma \in F'_{x}(c_i)$}
  \end{array} \right.
\end{displaymath}

\noindent Also let $c_{i,\gamma}$ be the valid coloring of $G_{i,\gamma}$ derived from $c_i$ of $G_i$, that is by discarding the color of the edge $xy_a$, where $y_a$ is the vertex such that $c_i(x,y_a)=\gamma$. Also if $c_{i,\gamma}$ is a valid coloring of $G_{i,\gamma}$, then $c_{i,\gamma}$ is said to be derivable from $c_1$ if we can extend the coloring $c_{i,\gamma}$ of $G_{i,\gamma}$ to the coloring $c_1$ of $G_1$.

~~~~~~~

\begin{lem}
\label{lem:lem5}
Let $c_i$ be any valid coloring of $G_i$. With respect to coloring $c_{i,\gamma}$ of $G_{i,\gamma}$, $\forall \gamma \in C'-F'_{x}(c_i)$, $\exists (\mu,\gamma,xy_i)$ critical path, where $\mu=c_i(y_i,y'_i)$.
\end{lem}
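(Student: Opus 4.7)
The plan is a direct application of Fact 2 combined with the minimality of $G$. Since $G_{i,\gamma}=G_i$ and $c_{i,\gamma}=c_i$ for every $\gamma \in C'-F'_x(c_i)$, I would view $c_i$ as a partial coloring of $G$ in which only the edge $xy_i$ is uncolored and try to color $xy_i$ with $\gamma$. First I would verify that $\gamma$ is a candidate color for $xy_i$: by hypothesis $\gamma \notin F'_x(c_i)$, and by Assumption 1, $\gamma \in C' = C-\{1,2\}$ implies $\gamma \notin F''_x(c_i)$; hence $\gamma \notin F_x(c_i)$, so no edge at $x$ uses $\gamma$. Moreover, Lemma 2 gives $\mu=c_i(y_i,y'_i)\in F''_x(c_i)=\{1,2\}$, so $\gamma\neq\mu$ and no edge at $y_i$ uses $\gamma$ either. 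Therefore $\gamma$ is a candidate for $xy_i$.

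Next, by the minimality of $G$ as a counter-example, $\gamma$ cannot be valid for $xy_i$ --- otherwise assigning $\gamma$ to $xy_i$ would extend $c_i$ to an acyclic edge coloring of $G$ using $\Delta+1$ colors, contradicting the choice of $G$. Fact 2 then guarantees some color $\alpha \in S_{xy_i}(c_i) \cap S_{y_ix}(c_i)$ together with an $(\alpha,\gamma,xy_i)$ critical path in $G$ under $c_i$.

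Finally I would identify $\alpha$ uniquely. The only neighbor of $y_i$ in $G_i$ is $y'_i$, so $F_{y_i}(c_i)=\{\mu\}$ and consequently $S_{xy_i}(c_i)=\{\mu\}$. Hence the intersection $S_{xy_i}(c_i)\cap S_{y_ix}(c_i)$ can contain only $\mu$, which forces $\alpha=\mu$ and delivers the promised $(\mu,\gamma,xy_i)$ critical path. I do not expect a substantive obstacle; the only care required is keeping the definitions of $S_{ab}$, $F'_x$, and $F''_x$ consistent with the convention that $c_i(x,y_i)$ is undefined in $G_i$, after which the lemma falls out as an immediate corollary of Fact 2 plus Lemma 2.
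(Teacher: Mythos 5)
Your proposal is correct and follows essentially the same route as the paper: the paper argues by contraposition that absence of a $(\mu,\gamma,xy_i)$ critical path makes $\gamma$ valid for $xy_i$ (via Fact \ref{fact:fact2}), contradicting the minimality of $G$, which is exactly your argument run in the direct direction. Your extra step pinning down $\alpha=\mu$ via $S_{xy_i}(c_i)=\{\mu\}$ (since $y_i$ is pendant in $G_i$) is just an explicit writing-out of what the paper leaves implicit, so there is no substantive difference.
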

\begin{proof}
\renewcommand{\qedsymbol}{\bqed}
Recall that when $\gamma \in C'-F'_{x}(c_i)$, we have $G_{i,\gamma}=G_i$ and hence $c_{i,\gamma}=c_i$. Suppose if there is no $(\mu,\gamma,xy_i)$ critical path, where $\gamma \in C'-F'_{x}(c_i)$, then by $Fact$ \ref{fact:fact2} color $\gamma$ is valid for the edge $xy_i$. Thus we get a valid coloring of $G$, a contradiction.
\end{proof}

\begin{lem}
\label{lem:lem6}
Let $c_i$ be any valid coloring of $G_i$. With respect to coloring $c_{i,\gamma}$ of $G_{i,\gamma}$, $\forall \gamma \in C'-F'_{x}(c_i)$, $\exists (\nu,\gamma,x,y'_i)$ maximal bichromatic path, where $\{\nu\}=\{1,2\}-\{c_i(y_i,y'_i)\}$.
\end{lem}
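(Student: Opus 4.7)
The plan is to leverage Lemma 5 after a small modification of $c_i$. Without loss of generality assume $\mu=c_i(y_i,y'_i)=1$, so we must produce a $(2,\gamma,x,y'_i)$ maximal bichromatic path. Recall that Lemma 5 already gives us a $(1,\gamma,xy_i)$ critical path; the idea is to recolor $y_iy'_i$ with color $2$ in order to ``rotate'' this critical path into a $(2,\gamma,xy_i)$ critical path, from which the required maximal bichromatic path drops out.

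First I would construct a valid coloring $c'_i$ of $G_i$ by setting $c'_i(y_iy'_i)=2$ and leaving all other edges unchanged. By Lemma~\ref{lem:lem4}, $2\notin S_{y_iy'_i}(c_i)$, and since $c_i(y_iy'_i)=1\neq 2$, we actually get $2\notin F_{y'_i}(c_i)$; thus the recoloring is proper. Since $y_i$ is a pendant vertex in $G_i$, no bichromatic cycle can pass through $y_iy'_i$, so $c'_i$ is acyclic, hence valid. Note also that $F'_x(c'_i)=F'_x(c_i)$ because no edge incident on $x$ is recolored, so the hypothesis $\gamma\in C'-F'_x(c_i)$ is equivalent to $\gamma\in C'-F'_x(c'_i)$.

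Next I apply Lemma~\ref{lem:lem5} to the valid coloring $c'_i$: with respect to $c'_i$ there is a $(\mu',\gamma,xy_i)$ critical path $P'$, where $\mu'=c'_i(y_iy'_i)=2$. Since the only neighbour of $y_i$ in $G_i$ is $y'_i$, the last edge of $P'$ is $y'_iy_i$ with colour $2$, so the penultimate vertex of $P'$ is $y'_i$ and the edge incident on $y'_i$ along $P'$ is coloured $\gamma$. Let $P''$ be obtained from $P'$ by deleting its last edge $y'_iy_i$; it is a $(2,\gamma)$ bichromatic path from $x$ to $y'_i$ starting at $x$ with colour $2$ and arriving at $y'_i$ via colour $\gamma$. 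Because $c_i$ and $c'_i$ agree on every edge of $P''$ (the only recoloured edge is $y_iy'_i$, which is not in $P''$), this same path $P''$ is a $(2,\gamma)$ bichromatic path in the coloring $c_i$.

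Finally I would verify that $P''$ is maximal in $c_i$. At the $y'_i$ end, the next edge would have to be coloured $2$, but Lemma~\ref{lem:lem4} gives $2\notin S_{y_iy'_i}(c_i)$ and $c_i(y_iy'_i)=1\neq 2$, hence $2\notin F_{y'_i}(c_i)$. At the $x$ end, extension would require an edge at $x$ coloured $\gamma$, but $\gamma\notin F'_x(c_i)$ by hypothesis and $\gamma\notin F''_x(c_i)=\{1,2\}$ since $\gamma\in C'$, so $\gamma\notin F_x(c_i)$. Hence $P''$ is a $(\nu,\gamma,x,y'_i)$ maximal bichromatic path with $\nu=2$, as required. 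I do not foresee a serious obstacle here; the only subtlety is ensuring that after the single-edge recoloring we really can invoke Lemma~\ref{lem:lem5} with the same parameter set $C'-F'_x$, which is handled by the observation that $F'_x$ is unchanged.
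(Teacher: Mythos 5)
Your proof is correct and takes essentially the same route as the paper: both recolor $y_iy'_i$ with $\nu$ (legitimate by Lemma 4 and the fact that $y_i$ is pendant in $G_i$) and then invoke Lemma 5 for the recolored coloring; the paper phrases this as a contradiction, while you run the argument directly by truncating the resulting $(\nu,\gamma,xy_i)$ critical path and explicitly checking maximality at $x$ and $y'_i$. No gaps.
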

\begin{proof}
\renewcommand{\qedsymbol}{\bqed}
Recall that when $\gamma \in C'-F'_{x}(c_i)$, we have $G_{i,\gamma}=G_i$ and hence $c_{i,\gamma}=c_i$. Suppose if there is no $(\nu,\gamma,x,y'_i)$ maximal bichromatic path, where $\gamma \in C'-F'_{x}(c_i)$, then by $Lemma$ \ref{lem:lem4}, color $\nu$ is a candidate for the edge $y_iy'_i$. Now recolor the edge $y_iy'_i$ with color $\nu$ to get a valid coloring $c'_{i}$ of $G_{i}$. Since by our assumption that there is no $(\nu,\gamma,x,y'_i)$ maximal bichromatic path with respect to $c_{i,\gamma}=c_i$, there cannot be any $(\nu,\gamma,xy_i)$ critical path with respect to the coloring $c'_{i}$, a contradiction to $Lemma$ \ref{lem:lem5} (Note that the color $\mu$ discussed in Lemma \ref{lem:lem5} and assumption is same as $\nu=c'_i(y_i,y'_i)$ in $c'_i$).
\end{proof}

~~~~~

\begin{asm}
\label{asm:asm21}
Since $\vert F_{x}(c_i)  \vert \le \Delta -1$, we have $\vert C-F_{x}(c_i) \vert \ge 2$. Since $C-F_{x}(c_i)= C'-F'_{x}(c_i)$, we have $\vert C'-F'_{x}(c_i) \vert \ge 2$. Thus $degree_{G_i}(y'_i) \ge 3$ and hence $degree_{G}(y'_i) \ge 3$. Let $\alpha,\beta \in C'-F'_{x}(c_i)$.
\end{asm}

~~~~~

\begin{lem}
\label{lem:lem7}
Let $c_i$ be any valid coloring of $G_i$. With respect to coloring $c_{i,\gamma}$ of $G_{i,\gamma}$, $\forall \gamma \in F'_{x}(c_i)$, $\exists (\mu,\gamma,xy_i)$ critical path, where $\mu=c_i(y_i,y'_i)$.
\end{lem}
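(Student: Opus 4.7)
The plan is to argue by contradiction: suppose no $(\mu,\gamma,xy_i)$ critical path exists in $c_{i,\gamma}$. Since $\gamma\in F'_x(c_i)$, there is a unique $y_a\in N'_{G_i}(x)$ with $c_i(x,y_a)=\gamma$, and $G_{i,\gamma}=G_i-xy_a$. Write $\mu_a=c_i(y_a,y'_a)$; by Lemma \ref{lem:lem2} applied to $c_i$, $\mu_a\in F''_x(c_i)=\{1,2\}$.

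The next step I would take is to produce a new valid coloring $c'_i$ of $G_i$ by recoloring the edge $xy_a$ with some color $\delta\in C-F_x(c_i)$ (by Assumption \ref{asm:asm21} there are at least two such colors). Once this is done, $\gamma\notin F_x(c'_i)$, so in particular $\gamma\in C'-F'_x(c'_i)$; Lemma \ref{lem:lem5} applied to $c'_i$ at color $\gamma$ would then produce a $(\mu,\gamma,xy_i)$ critical path in $c'_i$, where $\mu=c'_i(y_i,y'_i)=c_i(y_i,y'_i)$. Because $\delta\in C'$ and $\delta\ne\gamma$ (since $\gamma\in F_x(c_i)$ but $\delta\notin F_x(c_i)$), the color $\delta$ of the edge $xy_a$ in $c'_i$ is distinct from both $\mu$ and $\gamma$, so that edge cannot appear on the critical path. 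Consequently the path lies entirely in $G_i-xy_a=G_{i,\gamma}$ and agrees in color with $c_{i,\gamma}$, which yields a $(\mu,\gamma,xy_i)$ critical path in $c_{i,\gamma}$ — contradicting the assumption.

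The main obstacle is thus to find a valid $\delta$ for $xy_a$ in $G_{i,\gamma}$. By Fact \ref{fact:fact2}, a candidate $\delta\in C-F_x(c_i)$ fails to be valid iff there is a $(\mu_a,\delta,xy_a)$ critical path in $c_{i,\gamma}$ (the only color present at $y_a$ is $\mu_a$); since such a path avoids $xy_a$, it survives unchanged as a path in $c_i$. Suppose for contradiction every candidate $\delta$ is obstructed. If $\mu_a=\mu$, apply Lemma \ref{lem:lem5} to $c_i$ at $\delta$ to obtain a $(\mu,\delta,xy_i)$ critical path; together with the obstructing $(\mu,\delta,xy_a)$ critical path we have two $(\mu,\delta)$ maximal bichromatic paths through $x$ with distinct other endpoints $y_i\ne y_a$, contradicting Fact \ref{fact:fact1}. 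If $\mu_a\ne\mu$, then $\{\mu,\mu_a\}=\{1,2\}$ and $\nu:=\{1,2\}-\{\mu\}=\mu_a$; Lemma \ref{lem:lem6} then furnishes a $(\mu_a,\delta,x,y'_i)$ maximal bichromatic path in $c_i$ which by Fact \ref{fact:fact1} must coincide with the obstructing $(\mu_a,\delta,xy_a)$ critical path through $x$, forcing $y'_i=y_a$. But $y'_i=y_a$ forces $y_i$ and $y_a$ to be adjacent with $y'_a=y_i$, so the single edge $y_iy_a$ carries both colors $\mu=c_i(y_i,y'_i)$ and $\mu_a=c_i(y_a,y'_a)$, contradicting $\mu\ne\mu_a$. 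Either case yields a contradiction, so the desired $\delta$ exists and the overall argument completes.
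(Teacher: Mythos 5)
Your reduction of the problem to ``find a color $\delta\in C-F_x(c_i)$ that is valid for the edge $xy_a$'' is essentially the same difficulty the paper faces (there one first colors $xy_i$ with $\gamma$ and must then validly color $xy_j=xy_a$), and your transfer step via Lemma \ref{lem:lem5} applied to the modified coloring $c'_i$ is fine. But there is a genuine gap in how you dispose of the obstructions: you assert that $\mu_a=c_i(y_a,y'_a)\in F''_x(c_i)=\{1,2\}$ ``by Lemma \ref{lem:lem2} applied to $c_i$.'' Lemma \ref{lem:lem2} says nothing about $y_a$: it constrains only $c_i(y_i,y'_i)$ for the particular vertex $y_i$ whose edge to $x$ was deleted to form $G_i$ (its proof uses that $y_i$ is pendant in $G_i$ and that the uncolored edge is $xy_i$). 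For any other degree-two neighbour $y_a$ of $x$ there is no such constraint, and $\mu_a$ can perfectly well lie outside $\{1,2\}$ --- which is exactly why the paper's own proof of this lemma (Claim \ref{clm:clm1}) must treat three cases: $\eta\notin F_x(c_i)$, $\eta\in\{1,2\}$, and $\eta\in F'_x(c_i)$, where $\eta$ plays the role of your $\mu_a$.

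Consequently your final case analysis (``$\mu_a=\mu$'' versus ``$\mu_a\neq\mu$, hence $\mu_a=\nu$'') only covers the paper's case $\eta\in\{1,2\}$, and the argument is incomplete when $\mu_a\notin\{1,2\}$. Those missing cases are in fact the easier ones and can be patched along the paper's lines: if $\mu_a\notin F_x(c_i)$, then no $(\mu_a,\delta,xy_a)$ critical path can exist at all (such a path must leave $x$ on an edge colored $\mu_a$), so every candidate $\delta$ is valid; and if $\mu_a\in F'_x(c_i)$, say $\mu_a=c_i(x,y_k)$ with $degree_{G}(y_k)=2$, then an obstructing $(\mu_a,\delta,xy_a)$ critical path must pass through $y_k$ and use the edge $y_ky'_k$, forcing $\delta=c_i(y_k,y'_k)$; since by Assumption \ref{asm:asm21} there are two candidates $\alpha\neq\beta$, at least one of them is unobstructed. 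With these two cases added (and a one-line remark that a candidate $\delta$ must also be chosen different from $\mu_a$, which is automatic when $\mu_a\in\{1,2\}$ but not otherwise), your argument becomes a correct proof; as written, however, the appeal to Lemma \ref{lem:lem2} is invalid and leaves a real hole. The two sub-cases you do treat are handled correctly and mirror the paper's case $\eta\in\{1,2\}$, including the use of Fact \ref{fact:fact1} against Lemmas \ref{lem:lem5} and \ref{lem:lem6}.
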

\begin{proof}
Let $c_i(x,y_j)= \gamma$, where $\gamma \in F'_{x}(c_i)$. Suppose if there is no $(\mu,\gamma,xy_i)$ critical path, then by $Fact$ \ref{fact:fact2} color $\gamma$ is valid for the edge $xy_i$ with respect to the coloring $c_{i,\gamma}$. Color the edge $xy_i$ with color $\gamma$ to get a valid coloring $d$ of $G-\{xy_j\}$.

\noindent Now we will show that we can extend the coloring $d$ of $G-\{xy_j\}$ to a valid coloring of the graph $G$ by giving a valid color for the edge $xy_j$, leading to a contradiction of our assumption that $G$ was a minimum counter example. We claim the following:

~~~~~

\begin{clm}
\label{clm:clm1}
With respect to the coloring $d$, either color $\alpha$ or $\beta$ is valid for the edge $xy_j$ (Recall that $\alpha , \beta \in C' - F'_{x}(c_i)$ by $Assumption \ref{asm:asm21}$)
\end{clm}
\begin{proof}
Without loss of generality, let $d(y_j,y'_j)=\eta$. Note that $\eta \neq \gamma = c_i(x,y_j)$. Now if,
\begin{enumerate}
\item $\eta \notin F_{x}(c_i)$. In view of Assumption \ref{asm:asm21}, $\alpha$, $\beta$ $\notin F_{x}(c_i)$. Noting that $\eta$ cannot be equal to both $\alpha$ and $\beta$, without loss of generality, let $\eta \neq \alpha$. Then color the edge $(x,y_j)$ with color $\alpha$ to get a proper coloring $d'$. If a bichromatic cycle gets formed, then it should contain the edge $xy_j$ and also involve both the colors $\eta$ and $\alpha$ since $degree_{G}(y_j)=2$. But since $\eta \notin F_{x}(c_i)$, such a bichromatic cycle is not possible. Thus the coloring $d'$ is valid.

\item $\eta \in \{1,2\}=\{\mu,\nu\}=F''_{x}(c_i)$. Recolor the edge $xy_j$ with color $\alpha$ to get a coloring $d'$. We claim that the coloring $d'$ is valid. This is because if it is not valid, then there has to be a $(\alpha,\eta)$ bichromatic cycle containing the edge $xy_j$ with respect to $d'$. This implies that there has to be a $(\eta,\alpha,xy_j)$ critical path with respect to the coloring $d$ and hence with respect to the coloring $c_{i,\gamma}$ (Note that the coloring $d$ is obtained from $c_{i,\gamma}$ just by giving the color $\gamma$ to the edge $xy_i$ and $\eta, \gamma \neq \alpha,\beta$).

If $\eta= \mu$, this means that there was a $(\eta=\mu,\alpha,xy_j)$ critical path with respect to $c_{i,\gamma}$. But this is not possible by $Fact$ \ref{fact:fact1} since there is already a $(\mu,\alpha,xy_i)$ critical path with respect to $c_{i,\gamma}$ (by $Lemma$ \ref{lem:lem5}) and $y_i \neq y_j$.

Thus $\eta = \nu$. This means that there has to be a $(\eta=\nu,\alpha,xy_j)$ critical path with respect to $c_{i,\gamma}$. But this is not possible by $Fact$ \ref{fact:fact1} since there is already a $(\nu,\alpha,x,y'_i)$ maximal bichromatic path with respect to $c_{i,\gamma}$ (by $Lemma$ \ref{lem:lem6}) and $y'_i \neq y_j$ ( $y'_i \neq y_j$ since by $Assumption$ \ref{asm:asm21}, $degree_{G_i}(y'_i) \ge 3$. But $degree_{G_i}(y_j) = 2$). Thus there cannot be any bichromatic cycles with respect to the coloring $d'$. Thus the coloring $d'$ is valid.

\item $\eta \in F'_{x}(c_i)$. Let $y_k \in N'_{G}(x)$ be such that $d(x,y_k)=\eta$. With respect to colors $\{\alpha,\beta\}$, without loss of generality let $d(y_k,y'_k) \neq \beta$. Recall that $d(y_j,y'_j)=\eta$. Now recolor the edge $xy_j$ with color $\beta$ to get a coloring $d'$. Now if a bichromatic cycle gets formed, then it should contain the edge $xy_j$ and also involve both the colors $\eta$ and $\beta$. Thus the bichromatic cycle should contain the edge $xy_k$. Since $degree_{G}(y_k)=2$, the bichromatic cycle should contain the edge $y_ky'_k$. But by our assumption, $c_i(y_k,y'_k) \neq \beta$, a contradiction. Thus the coloring $d'$ is valid.
\end{enumerate}

Hence either color $\alpha$ or $\beta$ is valid for the edge $xy_j$.

\end{proof}
\noindent Thus we have a valid coloring (i.e, $d'$) for the graph $G$, a contradiction.
\renewcommand{\qedsymbol}{\bqed}
\end{proof}

~~~~

\begin{lem}
\label{lem:lem8}
Let $c_i$ be any valid coloring of $G_i$. With respect to coloring $c_{i,\gamma}$ of $G_{i,\gamma}$, $\forall \gamma \in F'_{x}(c_i)$, $\exists (\nu,\gamma,x,y'_i)$ maximal bichromatic path, where $\{\nu\}=\{1,2\}-\{c_i(y_i,y'_i)\}$.
\end{lem}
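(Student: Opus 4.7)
The plan is to mirror the proof of Lemma~\ref{lem:lem6}, replacing its appeal to Lemma~\ref{lem:lem5} with an appeal to Lemma~\ref{lem:lem7}. Suppose for contradiction that for some $\gamma \in F'_x(c_i)$, there is no $(\nu,\gamma,x,y'_i)$ maximal bichromatic path with respect to $c_{i,\gamma}$. Since $\nu \in \{1,2\}$, Lemma~\ref{lem:lem4} gives $\nu \notin S_{y_iy'_i}(c_i)$, so $\nu$ is a candidate color for $y_iy'_i$. Recolor $y_iy'_i$ with $\nu$ to obtain a coloring $c''_i$ of $G_i$. I would then verify this recoloring is valid: properness holds because $\nu$ is a candidate, and acyclicity holds because $y_i$ has degree $1$ in $G_i$ (as $xy_i$ has been removed), so no cycle passes through $y_i$ and the only modified edge is incident to $y_i$.

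Next, I would apply Lemma~\ref{lem:lem7} to $c''_i$, noting that $c''_i(y_iy'_i)=\nu$ and $F'_x(c''_i)=F'_x(c_i)$ (since the two colorings agree on every edge at $x$): there must exist a $(\nu,\gamma,xy_i)$ critical path with respect to $c''_{i,\gamma}$. Since $y_i$ has degree $1$ in $G_{i,\gamma}$, this critical path must reach $y_i$ via the edge $y'_iy_i$, which is colored $\nu$ in $c''_{i,\gamma}$. Stripping this last edge leaves a $(\nu,\gamma)$-alternating path $P$ from $x$ to $y'_i$ that begins with a $\nu$-colored edge and ends with a $\gamma$-colored edge.

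The concluding step is to argue that $P$ is in fact a $(\nu,\gamma,x,y'_i)$ maximal bichromatic path already with respect to $c_{i,\gamma}$, yielding the desired contradiction. First, $c''_{i,\gamma}$ and $c_{i,\gamma}$ differ only on $y_iy'_i$, and $P$ avoids $y_iy'_i$, so $P$ exists unchanged in $c_{i,\gamma}$. For maximality at $y'_i$: the last edge of $P$ is colored $\gamma$, and we need no $\nu$-colored edge at $y'_i$ in $c_{i,\gamma}$, which follows from Lemma~\ref{lem:lem4} combined with $c_i(y_iy'_i)=\mu \neq \nu$. For maximality at $x$: the first edge of $P$ is colored $\nu$, and we need no $\gamma$-colored edge at $x$ in $c_{i,\gamma}$, which holds because the unique $\gamma$-colored edge at $x$, namely $xy_a$, was removed when forming $G_{i,\gamma}$.

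The main obstacle is the maximality verification at $y'_i$: ensuring $P$ does not extend past $y'_i$ via some additional $\nu$-colored edge. This is precisely what Lemma~\ref{lem:lem4} rules out, so once that observation is in hand the proof reduces cleanly to combining the recoloring trick of Lemma~\ref{lem:lem6} with Lemma~\ref{lem:lem7}.
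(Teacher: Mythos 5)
Your proposal is correct and follows essentially the same route as the paper: assume the $(\nu,\gamma,x,y'_i)$ maximal bichromatic path is missing, use Lemma~\ref{lem:lem4} to recolor $y_iy'_i$ with $\nu$ (valid since $y_i$ is pendant in $G_i$), and invoke Lemma~\ref{lem:lem7} for the recolored coloring with $\mu$ replaced by $\nu$. The only difference is presentational — you derive the critical path from Lemma~\ref{lem:lem7} and convert it back into the forbidden maximal bichromatic path (spelling out the maximality checks the paper leaves implicit), whereas the paper argues the contrapositive directly.
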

\begin{proof}
Suppose if there is no $(\nu,\gamma,x,y'_i)$ maximal bichromatic path, where $\gamma \in F'_{x}(c_i)$, then by $Lemma$ \ref{lem:lem4}, color $\nu$ is a candidate for the edge $y_iy'_i$. Now recolor the edge $y_iy'_i$ with color $\nu$ to get a valid coloring $c'_{i,\gamma}$ of $G_{i}$. Since by our assumption that there is no $(\nu,\gamma,x,y'_i)$ maximal bichromatic path with respect to $c_{i,\gamma}$, there cannot be any $(\nu,\gamma,xy_i)$ critical path with respect to the coloring $c'_{i,\gamma}$, a contradiction to $Lemma$ \ref{lem:lem7} (Note that the color $\mu$ discussed in Lemma \ref{lem:lem7} and assumption is same as $\nu=c'_{i,\gamma}(y_i,y'_i)$ in $c'_{i,\gamma}$).

\renewcommand{\qedsymbol}{\bqed}
\end{proof}

~~~~~

\noindent \textbf{Critical Path Property:}In the rest of the paper we will have to repeatedly use the properties (namely the presence of $(\mu,\gamma,xy_i)$ critical path in $G_{i,\gamma}$, where $\mu=c_{i,\gamma}(y_i,y'_i)$) described by $Lemma$ \ref{lem:lem5} and $Lemma$ \ref{lem:lem7}. Therefore we will name these properties as the \emph{Critical Path Property} of the graph $G_{i,\gamma}$

~~~~~

\noindent If $c_i$ is any valid coloring of $G_i$, then in $G_{i,\gamma}$, $\forall \gamma \in C'$, by $Critical\ Path\ Property$ (i.e., $Lemma$ \ref{lem:lem5} or $Lemma$ \ref{lem:lem7}) there exists a $(\mu,\gamma,xy_i)$ critical path and by $Lemma$ \ref{lem:lem6} and $Lemma$ \ref{lem:lem8} there exists a $(\nu,\gamma,x,y'_i)$ maximal bichromatic path, where $\mu=c_i(y_i,y'_i)$ and $\{\nu\} = F''_{x}(c_i)- \{\mu\}$. Recall that $\vert S_{ab} \vert \le \Delta-1$ for any $ab \in E$. As an immidiate consequence we have,
\begin{eqnarray}
\label{eqn:eqn1}
 S_{xq} = S_{xq'}=S_{y_iy'_i}=C-\{1,2\}=C'.
\end{eqnarray}

\noindent In view of $(\ref{eqn:eqn1})$, we have
\begin{equation}
\label{eqn:eqn2}
\vert S_{xq} \vert = \vert S_{xq'} \vert = \vert S_{y_iy'_i} \vert = \vert C' \vert = \Delta -1.
\end{equation}

~~~~~~

\begin{lem}
\label{lem:lem9}
Let $c_i$ be any valid coloring of $G_i$. Let $\mu=c_i(y_i,y'_i) \in \{1,2\}$. Also let $y_j \in N'_{G}(x)-\{y_i\}$. Then $\forall \gamma \in C'$, the $(\mu,\gamma,xy_i)$ critical path in $G_{i,\gamma}$ does not contain the vertex $y_j$.
\end{lem}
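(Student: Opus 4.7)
The plan is to argue by contradiction: assume the $(\mu,\gamma,xy_i)$ critical path $P$ in $G_{i,\gamma}$ passes through $y_j$. Since $P$ starts at $x$ and ends at $y_i$, and $y_j \notin \{x, y_i\}$ (because $y_j \in N'_G(x) - \{y_i\}$), the vertex $y_j$ must be internal on $P$. Then both colors $\mu$ and $\gamma$ appear on edges at $y_j$ in $P$.

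Next I would use the structural fact that $y_j \in W_0$, so $\mathrm{degree}_G(y_j) = 2$ with $N_G(y_j) = \{x, y'_j\}$. Hence the only two edges incident on $y_j$ in $G$ are $xy_j$ and $y_jy'_j$, and these must be precisely the two edges of $P$ at $y_j$, carrying colors $\mu$ and $\gamma$ in some order. Thus $\{c_i(x,y_j), c_i(y_j,y'_j)\} = \{\mu, \gamma\}$ (up to the possible removal of $xy_j$ in the passage to $G_{i,\gamma}$, which I handle below). Since $c_i(x, y_j) \in F'_x(c_i)$ and $\mu \in F''_x(c_i) = \{1,2\}$, and $F'_x(c_i) \cap F''_x(c_i) = \emptyset$, we must have $c_i(x,y_j) = \gamma$ and $c_i(y_j, y'_j) = \mu$.

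Finally I split on the definition of $G_{i,\gamma}$. If $\gamma \in C' - F'_x(c_i)$, then $c_i(x,y_j) = \gamma$ immediately contradicts $\gamma \notin F'_x(c_i)$. If $\gamma \in F'_x(c_i)$ and $y_a$ is the vertex with $c_i(x,y_a) = \gamma$, then by properness $y_j = y_a$; but in $G_{i,\gamma}$ the edge $xy_a$ has been deleted, so $y_j$ has degree $1$ in $G_{i,\gamma}$ and cannot be an internal vertex of $P$, again a contradiction.

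The only subtle point (the main obstacle) is handling the case $\gamma \in F'_x(c_i)$ carefully, because there the path lives in $G_{i,\gamma} = G_i - xy_a$ rather than in $G_i$; the argument must be phrased so that the forced identity $c_i(x,y_j)=\gamma$ is read off from $G_i$ (where the edge $xy_j$ still has a color) and then compared with the degree drop at $y_a$ in $G_{i,\gamma}$. Everything else is a routine application of the definitions of $F'_x, F''_x$ together with Assumption \ref{asm:asm1} and the construction of $G_{i,\gamma}$.
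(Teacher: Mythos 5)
Your proof is correct, and its skeleton matches the paper's: assume the critical path meets $y_j$, note $y_j\neq x,y_i$ forces $y_j$ to be internal, and use $degree_G(y_j)=2$ to conclude the path must use the edge $xy_j$, then derive a color contradiction. Where you diverge is in the finishing move. The paper exploits the definition of a $(\mu,\gamma,xy_i)$ critical path directly: since $x$ is an endpoint, the unique path edge incident on $x$ is its first edge and is colored $\mu$, so $c_i(x,y_j)=\mu\in\{1,2\}$, which immediately contradicts $c_i(x,y_j)\in F'_x(c_i)\subseteq C'$; no case analysis on how $G_{i,\gamma}$ was formed is needed, and the possible deletion of a $\gamma$-colored edge at $x$ never enters the picture. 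You instead only use that the edge at $y_j$ cannot be colored $\mu$, force $c_i(x,y_j)=\gamma$, and then split on whether $\gamma\in F'_x(c_i)$, handling the deleted edge $xy_a$ via a degree-one argument and properness. That works, and you correctly flag the subtlety that the identity $c_i(x,y_j)=\gamma$ can only be read off when $xy_j$ survives in $G_{i,\gamma}$ (otherwise $y_j$ already has degree one and cannot be internal), but it buys you nothing over the paper's one-line observation that the path's edge at the endpoint $x$ carries the color $\mu$; using that fact would let you delete your entire final case split.
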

\begin{proof}
Suppose there exists a $(\mu,\gamma,xy_i)$ critical path that contains the vertex $y_j$, then $y_j$ cannot be an end vertex as $y_i \neq y_j$. Thus $y_j$ is an internal vertex. Now since $degree_{G}(y_j)=2$, the $(\mu,\gamma,xy_i)$ critical path should contain the edge $xy_j$ as well. But the $(\mu,\gamma,xy_i)$ critical path ends at vertex $x$ with color $\mu$ which implies $c_i(x,y_j) =\mu$, a contradiction since $c_i(x,y_j) \notin \{1,2\}=\{\mu,\nu\}$.
\renewcommand{\qedsymbol}{\bqed}
\end{proof}

~~~~~~

\begin{lem}
\label{lem:lem10}
Let $c_i$ be any valid coloring of $G_i$ and let $u \in \{q,q'\}$ . Let $\mu=c_i(y_i,y'_i)= c_i(x,u)\in \{1,2\}$ and $\nu = \{1,2\}-\{\mu\}$. Then $\forall \gamma \in C'$, the $(\mu,\gamma,xy_i)$ critical path in $G_{i,\gamma}$ has length at least five.
\end{lem}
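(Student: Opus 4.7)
The plan is to proceed by contradiction: suppose the $(\mu,\gamma,xy_i)$ critical path $P$ in $G_{i,\gamma}$ has length exactly three. Since critical paths have odd length with minimum value three, ruling out length three suffices. A critical path of length three that starts at $x$ via a $\mu$-edge and ends at $y_i$ via a $\mu$-edge has the schematic form $x\stackrel{\mu}{-}v_1\stackrel{\gamma}{-}v_2\stackrel{\mu}{-}y_i$. In $G_{i,\gamma}$ the unique $\mu$-colored edge at $x$ is $xu$ (by the hypothesis $c_i(x,u)=\mu$, together with $c_i(x,u')=\nu$ and every remaining edge at $x$ colored in $F'_x\subseteq C'$, while $\mu\notin C'$), and the unique neighbour of $y_i$ in $G_{i,\gamma}$ is $y'_i$ (since $xy_i$ was deleted). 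Hence $P=x\stackrel{\mu}{-}u\stackrel{\gamma}{-}y'_i\stackrel{\mu}{-}y_i$, which forces an edge $uy'_i$ with $c_i(u,y'_i)=\gamma$.

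The next step is to extract the exact colour-palette at $u$ and $y'_i$ from equation~(\ref{eqn:eqn1}), namely $S_{xu}=S_{y_iy'_i}=C'$. Unpacking these identities yields $F_u = C\setminus\{\nu\}$ and $F_{y'_i}=C\setminus\{\nu\}$. Consequently neither $u$ nor $y'_i$ has any $\nu$-colored incident edge; and, by properness, each has a unique $\gamma$-colored incident edge, which must then be $uy'_i$ in both cases. The connected component of the $(\nu,\gamma)$-colored subgraph of $G_{i,\gamma}$ containing $y'_i$ is therefore the single edge $uy'_i$: from $y'_i$ one cannot leave via $\nu$ (no such edge) or via any $\gamma$-edge other than $uy'_i$, and from $u$ one likewise cannot continue.

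The final step is to invoke Lemma~\ref{lem:lem6} (when $\gamma\in C'-F'_x(c_i)$) or Lemma~\ref{lem:lem8} (when $\gamma\in F'_x(c_i)$), which guarantees a $(\nu,\gamma,x,y'_i)$ maximal bichromatic path in $G_{i,\gamma}$ with respect to $c_{i,\gamma}$. Such a path forces $x$ and $y'_i$ to lie in the same $(\nu,\gamma)$-bichromatic component. But the component of $x$ contains the $\nu$-colored edge $xu'$ (note that $x$ has no $\gamma$-edge in $G_{i,\gamma}$, either because $\gamma\notin F_x$ or because $xy_a$ has been removed), and is therefore disjoint from the two-vertex component $\{u,y'_i\}$ (since $x\notin\{u,y'_i\}$). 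This contradiction rules out length-three critical paths and yields the claimed lower bound of five.

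The main obstacle -- and really the crux of the argument -- is recognizing that equation~(\ref{eqn:eqn1}) does much more than bound the sizes of $S_{xu}$ and $S_{y_iy'_i}$: it pins down the full colour palette at $u$ and at $y'_i$, and the resulting simultaneous absence of $\nu$-edges at both endpoints of the forced $\gamma$-edge $uy'_i$ is exactly what isolates $\{u,y'_i\}$ as its own $(\nu,\gamma)$-component, producing the conflict with the maximal bichromatic path of Lemma~\ref{lem:lem6}/\ref{lem:lem8}. Everything else is a direct deduction from the hypothesis that the critical path has length three.
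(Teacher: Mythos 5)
Your proof is correct, and it takes a recognizably different route from the paper's, though both share the same opening move: a length-three critical path must be $x\stackrel{\mu}{-}u\stackrel{\gamma}{-}y'_i\stackrel{\mu}{-}y_i$, forcing the edge $uy'_i$ to exist with color $\gamma$, and both ultimately rest on equation~(\ref{eqn:eqn1}) determining the full palettes at $u$ and $y'_i$. The paper then \emph{recolors}: it changes $y_iy'_i$ to $\nu$ (proper and valid since $y_i$ is pendant and $1,2\notin S_{y_iy'_i}$), applies the Critical Path Property (Lemma~\ref{lem:lem5}/Lemma~\ref{lem:lem7}) to the modified coloring to force a $(\nu,\gamma,xy_i)$ critical path, which must pass through $y'_i$ and then through $u$ via the $\gamma$-edge $uy'_i$, giving $\nu\in S_{xu}$ and contradicting~(\ref{eqn:eqn1}). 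You instead stay with $c_{i,\gamma}$ itself, invoke the $(\nu,\gamma,x,y'_i)$ maximal bichromatic path guaranteed by Lemma~\ref{lem:lem6}/Lemma~\ref{lem:lem8}, and observe that since $\nu$ is absent at both $u$ and $y'_i$ while $uy'_i$ is the unique $\gamma$-edge at each, the $(\nu,\gamma)$-component of $y'_i$ is exactly the single edge $uy'_i$, which cannot contain $x$ --- an immediate contradiction. In effect, the recoloring that the paper redoes inside this proof is already encapsulated in the proofs of Lemmas~\ref{lem:lem6} and~\ref{lem:lem8}, so your argument is slightly more economical (no new coloring to validate, the component-isolation observation replaces the reapplication of the Critical Path Property), whereas the paper's version depends only on Lemmas~\ref{lem:lem5}/\ref{lem:lem7} and~(\ref{eqn:eqn1}). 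Both are sound; the only implicit step in yours (also implicit in the paper) is that deleting $xy_a$ in forming $G_{i,\gamma}$ does not alter the palettes at $u$ or $y'_i$, which holds because neither vertex is $x$ or $y_a$.
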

\begin{proof}
Suppose not. Then the $(\mu,\gamma,xy_i)$ critical path has length three which implies that the vertices in the critical path are $x$, $u$, $y'_i$, $y_i$ in that order. Thus $\mu \in F_{u}(c_i)$ and $F_{u}(c_i)=S_{xu} \cup \{\mu\}$. Now change the color of the edge $y'_iy_i$ to $\nu$. It is proper since by $\ref{eqn:eqn1}$, we have $\{1,2\}=\{\mu,\nu\} \notin S_{y_iy'_i}$. It is valid since $y_i$ is a pendant vertex in $G_{i,\gamma}$. Now in view of $Critical\ Path\ Property$ (i.e., $Lemma$ \ref{lem:lem5} or $Lemma$ \ref{lem:lem7}) there has to be a $(\nu,\gamma,xy_i)$ critical path that passes through the vertex $y'_i$ with respect to this new coloring. Since $c_{i,\gamma}(u,y'_i)=\gamma$, this $(\nu,\gamma,xy_i)$ critical path should contain vertex $u$ as an internal vertex, which implies that color $\nu \in F_{u}(c_i)$. Recalling that $F_{u}(c_i)=S_{xu} \cup \{\mu\}$, we have $\nu \in S_{xu}$, a contradiction in view of $(\ref{eqn:eqn1})$. Thus the $(\mu,\gamma,xy_i)$ critical path has length at least five with respect to the coloring $c_{i,\gamma}$ of $G_{i,\gamma}$.
\renewcommand{\qedsymbol}{\bqed}
\end{proof}

~~~~~

\subsection{The structure of the minimum counter example in the vicinity of the primary pivot, $x$}

\begin{lem}
\label{lem:lem11}
The minimum counter example $G$ satisfies the following properties,
\begin{enumerate}
\item[(a)] $\forall u,v \in N_{G}(x)$, $(u,v) \notin E(G)$.
\item[(b)] $\forall y_i \in N'_{G}(x)$ and $\forall v \in N_{G}(x)$, we have $(v,y'_i) \notin E(G)$.
\end{enumerate}
\end{lem}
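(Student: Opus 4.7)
Plan: I proceed by contradiction, splitting into sub-cases according to which vertices in $N_G(x) = N'_G(x) \cup \{q,q'\}$ form the forbidden edge. In each sub-case the common strategy is to exhibit a color $\gamma \in C'$ for which the critical path guaranteed by the Critical Path Property (Lemma \ref{lem:lem5} or \ref{lem:lem7}) would be forced to traverse specific small-degree vertices; the contradiction then arises either because a required colored edge fails to exist at a degree-$2$ vertex, or because equation (\ref{eqn:eqn1}) forces $F_q = F_{q'} = C \setminus \{\mu\}$ and hence no $\mu$-colored edge is incident to $q$ (or $q'$).

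For part (a), three sub-cases arise. When $y_j \sim v$ for $v \in \{q, q'\}$, Lemmas \ref{lem:lem2} and \ref{lem:lem4} conflict directly: at $v$ both $c_j(y_j, v)$ and $c_j(x, v)$ must lie in $\{1,2\}$ and be distinct, yet Lemma \ref{lem:lem4} applied to $c_j$ gives $\{1,2\} \cap (F_v \setminus \{c_j(y_j, v)\}) = \emptyset$. When $y_j \sim y_k$ with $j \neq k$, the degree-$2$ condition forces $y'_j = y_k$ and $y'_k = y_j$; picking any $\gamma \in C' - F'_x(c_j)$ (non-empty by Assumption \ref{asm:asm21}), the $(\mu,\gamma,xy_j)$ critical path would need a $\gamma$-colored edge at $y_k$, but $y_k$'s two edges carry colors $\mu$ and $c_j(x, y_k) \in F'_x(c_j)$, neither equal to $\gamma$. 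When $q \sim q'$, set $\eta := c_1(q, q') \in C'$ and WLOG $c_1(x,q) = \mu = 1$; the $(1, \eta, xy_1)$ critical path is forced to begin $x \to q \to q'$ via the uniquely colored edges $xq$ and $qq'$, and then needs a $1$-colored edge at $q'$; but equation (\ref{eqn:eqn1}) gives $F_{q'} = C \setminus \{1\}$, so no such edge exists.

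For part (b), the two sub-cases follow the same template. When $y'_i \sim y_j$ with $j \neq i$, the degree-$2$ condition on $y_j$ forces $y'_j = y'_i =: w$; setting $\gamma := c_i(y_j, w)$, Lemma \ref{lem:lem4} gives $\gamma \in C'$. The $(\mu, \gamma, xy_i)$ critical path must end $\cdots - y_j - w - y_i$ (because $wy_j$ is the unique $\gamma$-edge at $w$), and so requires a $\mu$-colored edge at $y_j$; but $y_j$'s two edges carry colors $\gamma$ and $c_i(x, y_j) \in F'_x(c_i) \subseteq C'$, neither equal to $\mu$. When $y'_i \sim q$ (the case $y'_i \sim q'$ is symmetric), set $\gamma := c_i(y'_i, q) \in C'$ (again by Lemma \ref{lem:lem4}). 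The $(\mu, \gamma, xy_i)$ critical path must end $\cdots - q - y'_i - y_i$; if $c_i(x,q) = \mu$, the path is forced to collapse to $x - q - y'_i - y_i$ of length $3$, contradicting Lemma \ref{lem:lem10}; if $c_i(x,q) = \nu$, then equation (\ref{eqn:eqn1}) gives $F_q = C \setminus \{\mu\}$, so $q$ has no $\mu$-colored edge to extend the path backward.

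The main technical obstacle is ensuring that the chosen $\gamma$ genuinely lies in $C'$ in each sub-case, which repeatedly uses Lemma \ref{lem:lem4} combined with properness at the shared external vertex, and to handle the possibility that $G_{i,\gamma}$ differs from $G_i$ by the removal of one edge $xy_l$; in every sub-case, the removed edge is not among the edges at $y_j, y_k, w, q, q'$ used in the argument, so the edge-counting contradictions go through unchanged.
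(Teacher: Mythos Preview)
Your proof is correct and follows essentially the same case-decomposition strategy as the paper: split according to which pair of vertices in $N'_G(x)\cup\{q,q'\}$ would form the forbidden edge, and in each sub-case derive a contradiction from the Critical Path Property together with equation~(\ref{eqn:eqn1}) and Lemma~\ref{lem:lem10}. The tactical details differ in a few places: for $y_j\sim y_k$ the paper uses the one-line degree argument ($\mathrm{degree}_G(y'_j)\ge 3$ versus $\mathrm{degree}_G(y_k)=2$ from~(\ref{eqn:eqn2})), whereas you trace the critical path; for $y_j\sim v\in\{q,q'\}$ you invoke Lemmas~\ref{lem:lem2} and~\ref{lem:lem4} while the paper argues directly from properness and~(\ref{eqn:eqn1}); and for $y'_i\sim q'$ the paper recolors $y_iy'_i$ to reduce to the Lemma~\ref{lem:lem10} contradiction, whereas you use~(\ref{eqn:eqn1}) to show $\mu\notin F_{q'}$. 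These are interchangeable maneuvers within the same overall argument.
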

\begin{proof}
To prove $(a)$ we consider the following cases:\newline
\noindent \emph{case 1.1: $u,v \in N'_{G}(x)$} \newline
Let $u=y_k$ and $v= y_j$. Now if $u \in N_{G}(v)$, then $u = y'_j$. Recalling that $\Delta(G) \ge 3$, in view of $(\ref{eqn:eqn2})$, we have $degree_G(u)=degree_G(y'_j) \ge 3$. But $degree_G(u)=degree_G(y_k) = 2$, a contradiction. \newline

\noindent \emph{case 1.2: $u,v \in N''_{G}(x)$} \newline
Then we need to show that $q' \notin N_{G}(q)$. To see this consider the coloring $c_i$ of graph $G_i$. We know that $\{c_i(x,q), c_i(x,q')\}=\{\mu,\nu\}$. Without loss of generality let $c_i(x,q)=c_i(y_i,y'_i)=\mu$. Note that by $(\ref{eqn:eqn2})$, we have $S_{xq} = C'$. If $q' \in N_{G}(q)$, then $c_i(q,q') \in C'$. Let $c_i(q,q')=\gamma \notin \{\mu,\nu\}$. Now in $G_{i,\gamma}$, the $(\mu,\gamma)$ maximal bichromatic path that starts at vertex $x$ contains only edges $xq$ and $qq'$ since $\mu \notin F_{q'}(c_i)$ (by $(\ref{eqn:eqn2})$). Thus by $Fact$ \ref{fact:fact1}, there cannot be a $(\mu,\gamma,xy_i)$ critical path in $G_{i,\gamma}$, a contradiction to $Critical\ Path\ Property$ (i.e., $Lemma$ \ref{lem:lem5} or $Lemma$ \ref{lem:lem7}). Thus $q' \notin N_{G}(q)$. \newline

\noindent \emph{case 1.3: $u \in N''_{G}(x)$ and $v \in N'_{G}(x)$} \newline
Let $v = y_i$. Then we have to show that $y'_i \notin N''_{G}(x)=\{q,q'\}$. To see this consider the coloring $c_i$ of graph $G_i$. Recall that $\{c_i(x,q), c_i(x,q')\}=\{\mu,\nu\}$. Without loss of generality let $c_i(x,q)=c_i(y_i,y'_i)=\mu$. Now if $y'_i = q$, then we have $c(q,y_i)=c_i(y'_i,y_i)=\mu$, a contradiciton since $c(x,q)=\mu$. On the other hand if $y'_i = q'$, then $c(q',y_i)=c_i(y'_i,y_i)=\mu$. This means that $\mu \in S_{xq'}$, a contradiction in view of $(\ref{eqn:eqn1})$. Thus $y'_i \neq q,q'$.
\newline

Thus $\forall u,v \in N_{G}(x)$, we have $(u,v) \notin E(G)$ \newline

\noindent To prove $(b)$ we consider the following cases:\newline
\noindent \emph{case 2.1: $v \in N'_{G}(x)$} \newline
Let $v= y_j \in N'_{G}(x)$. If $(v,y'_i)=(y_j,y'_i) \in E(G)$, then $y'_i = y'_j$. Consider the coloring $c_j$ of graph $G_j$. Let $c_j(y_j,y'_j)=\mu$. Recall that by $(\ref{eqn:eqn2})$, we have $S_{y_jy'_j} = C'$. If $y'_i = y'_j$, then $c_j(y'_j,y_i) \in C'$. Let $c_j(y'_j,y_i)=\gamma$. Now in $G_{j,\gamma}$, the $(\mu,\gamma)$ maximal bichromatic path that starts at vertex $y_j$ contains only edges $y_jy'_j$, $y'_jy_i$ and thus ends at vertex $y_i$ since $\mu \notin F_{y_i}(c_j)$. This is because $N_{G_{j,\gamma}}(y_i)=\{y'_j,x\}$ and we have $c_j(y'_j,y_i)=\gamma$ and $c_j(x,y_i) \neq \mu$( since by $Assumption$ \ref{asm:asm1}, $\mu \in c_j(x,q),c_j(x,q')\}$ ). Thus by $Fact$ \ref{fact:fact1}, there cannot be a $(\mu,\gamma,xy_j)$ critical path in $G_{j,\gamma}$, a contradiction to $Critical\ Path\ Property$ (i.e., $Lemma$ \ref{lem:lem5} or $Lemma$ \ref{lem:lem7}). Thus $y'_j \neq y'_i$. \newline

\noindent \emph{case 2.2: $v \in N''_{G}(x)=\{q,q'\}$} \newline
Then we have to show that $y'_i \notin N_{G}(q) \cup N_{G}(q')$. To see this consider the coloring $c_i$ of graph $G_i$. Recall that $\{c_i(x,q), c_i(x,q')\}=\{\mu,\nu\}$. Without loss of generality let $c_i(x,q)=c_i(y_i,y'_i)=\mu$. Suppose $y'_i \in N_{G}(q)$, then we have $c(y'_i,q) \in S_{xq}$. Thus by $(\ref{eqn:eqn1})$, we have $c_i(y'_i,q)\neq \nu$. Now there exists  a $(\mu,c_i(y'_i,q)\neq \nu,xy_i)$ critical path of length 3, a contradiction to $Lemma$ \ref{lem:lem10}. Now if $y'_i \in N_{G}(q')$, then we recolor the edge $y_iy'_i$ with color $\nu$ to get a valid coloring $c'_i$. Now there exists a $(\nu,c_i(y'_i,q'),xy_i)$ critical path of length 3, a contradiction to $Lemma$ \ref{lem:lem10}. Thus $y'_i \notin N_{G}(q) \cup N_{G}(q')$. \newline

Thus $\forall y_i \in N'_{G}(x)$ and $\forall v \in N_{G}(x)$, we have $(v,y'_i) \notin E(G)$.

\renewcommand{\qedsymbol}{\bqed}
\end{proof}

\subsection{Modification of valid coloring $c_1$ of $G_1$ to get valid coloring $c_j$ of $G_j$}

\begin{asm}
\label{asm:asm3}
Let $c_1$ be a valid coloring of $G_1$ and without loss of generality let $c_1(x,q)=1$, $c_1(x,q')=2$ and $c_1(y_1,y'_1)=\mu =1$.
\end{asm}

\noindent \textbf{Remark: }In view of $Assumption$ \ref{asm:asm3}, the $Critical\ Path\ Property$ with respect to the coloring $c_1$ of $G_1$ reads as follows: With respect to the coloring $c_{1,\gamma}$, there exists a $(1,\gamma,xy_1)$ critical path, for all $\gamma \in C'$ .

~~~~~~~~~~~~

Let $f_1$ be the coloring of $G_1$ obtained from $c_1$ by exchanging the colors of the edges $xq$ and $xq'$. Also for $\gamma \in C'$, we define the coloring $f_{1,\gamma}$ as the coloring obtained from $c_{1,\gamma}$ by exchanging the colors with respect to the edges $xq$ and $xq'$. Note that $f_{1,\gamma}$ can be obtained from $f_1$ just by discarding the $\gamma$ colored edge incident on vertex $x$ for $\gamma \in F'_{x}(f_1)$.

\begin{clm}
\label{clm:clm2}
The coloring $f_1$ is proper but is not valid.
\end{clm}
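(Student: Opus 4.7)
The plan is to prove Claim~\ref{clm:clm2} in two independent steps: first that $f_1$ is proper, then that $f_1$ is not acyclic.

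Properness is immediate from Fact~\ref{fact:fact3}. Since $f_1$ is obtained from $c_1$ by the color exchange on the edges $xq$ and $xq'$, I only need to check $c_1(x,q)=1\notin S_{xq'}$ and $c_1(x,q')=2\notin S_{xq}$, both of which follow from $S_{xq}=S_{xq'}=C'=C-\{1,2\}$ as stated in $(\ref{eqn:eqn1})$.

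For the second step I would argue by contradiction: suppose $f_1$ is a valid coloring of $G_1$. The swap does not touch $y_1y'_1$, so $f_1(y_1,y'_1)=1$, and the unordered pair of colors on $\{xq,xq'\}$ is preserved, giving $F''_x(f_1)=\{1,2\}$ and $F'_x(f_1)=F'_x(c_1)$. Consequently Assumption~\ref{asm:asm1} holds for $f_1$ with $\mu_{f_1}=1$, and Assumption~\ref{asm:asm21} still supplies some $\gamma\in C'-F'_x(c_1)=C'-F'_x(f_1)$. For this $\gamma$ we have $G_{1,\gamma}=G_1$ and $f_{1,\gamma}=f_1$, so the Critical Path Property (Lemma~\ref{lem:lem5}) applied to $f_1$ guarantees a $(1,\gamma,xy_1)$ critical path in $G$ with respect to $f_1$.

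The contradiction will come from Lemma~\ref{lem:lem1} applied to the original valid coloring $c_1$ with $u=x$, $i=q$, $j=q'$, $a=x$, $b=y_1$, and $\{\lambda,\xi\}=\{1,\gamma\}$. The hypotheses are easy to verify: $\{1,\gamma\}\cap\{c_1(x,q),c_1(x,q')\}=\{1\}\neq\emptyset$; $\{q,q'\}\cap\{x,y_1\}=\emptyset$, since by $(\ref{eqn:eqn2})$ both $q$ and $q'$ have degree $\Delta\ge 3$ whereas $y_1$ has degree $2$; Lemma~\ref{lem:lem5} applied to $c_1$ itself provides a $(1,\gamma,xy_1)$ critical path containing $x$ (as an endpoint); and the exchange is proper by the first step. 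Lemma~\ref{lem:lem1} then rules out any $(1,\gamma,xy_1)$ critical path with respect to $f_1$, contradicting what the Critical Path Property forced in the previous paragraph. The only subtle point is that $u=x$ coincides with the endpoint $a$ of the critical path rather than being an internal vertex of it, but because $c_1(u,j)=2\notin\{1,\gamma\}$ the ``internal vertex $i$'' clause in the proof of Lemma~\ref{lem:lem1} is not invoked, and traversing the critical path from $b=y_1$ toward $a=x$ one meets $i=q$ strictly before reaching $u$, so the argument of Lemma~\ref{lem:lem1} applies as written.
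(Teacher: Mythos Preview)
Your proof is correct and follows essentially the same line as the paper's. Both establish properness via $(\ref{eqn:eqn1})$ and then, assuming $f_1$ were valid, use Lemma~\ref{lem:lem1} (with exactly the same choice $u=x$, $i=q$, $j=q'$, $ab=xy_1$, $\lambda=1$, $\xi=\gamma$) together with the $(1,\gamma,xy_1)$ critical path in $c_1$ to rule out such a critical path in $f_1$. The only cosmetic difference is how the contradiction is finished: the paper invokes Fact~\ref{fact:fact2} to conclude $\gamma$ is valid for $xy_1$, yielding a valid coloring of $G$; you instead re-invoke Lemma~\ref{lem:lem5} for the (hypothetically valid) $f_1$ to force a $(1,\gamma,xy_1)$ critical path under $f_1$, which is the same conclusion packaged differently. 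Your extra remarks verifying $\{q,q'\}\cap\{x,y_1\}=\emptyset$ and addressing the $u=a$ corner case in Lemma~\ref{lem:lem1} are helpful clarifications that the paper leaves implicit.
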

\begin{proof}
The coloring $f_1$ is proper since in view of $(\ref{eqn:eqn1})$, $2 \notin S_{xq}$ and $1 \notin S_{xq'}$. Suppose the coloring $f_1$ is valid. Let $\gamma$ be a candidate color for the edge $xy_1$. Clearly $\gamma \in C-F_{x}(f_1)$. Now since $f_1$ is proper, taking $u=x$, $i=q$, $j=q'$, $ab=xy_1$, $\lambda = 1$ and $\xi = \gamma$, $Lemma$ \ref{lem:lem1} can be applied. There existed a $(1,\gamma,xy_1)$ critical path with respect to coloring $c_1$. By $Lemma$ \ref{lem:lem1}, we infer that there cannot be any $(1,\gamma,xy_1)$ critical path with respect to the coloring $f_1$. Thus by Fact \ref{fact:fact2}, candidate color $\gamma$ is valid for the edge $xy_1$. Thus we have obtained a valid coloring for the minimum counter example $G$, a contradiction.
\end{proof}

By $Claim$ \ref{clm:clm2}, there exist bichromatic cycles with respect to the coloring $f_1$. It is clear that each bichromatic cycle with respect to $f_1$ has to contain either the edge $xq$ or $xq'$ since we have changed only the colors of the edges $xq$ and $xq'$ to get the coloring $f_1$ from $c_1$. Thus each such bichromatic cycle should be either a $(1,\gamma)$ bichromatic cycle or a $(2,\gamma)$ bichromatic cycle. Note that each of these bichromatic cycles should pass through the vertex $x$. Moreover observe that there cannot be any $(1,2)$ bichromatic cycle since color $1 \notin S_{xq}$ with respect to $f_1$ in view of $(\ref{eqn:eqn1})$. Thus $\gamma \in F'_{x}(f_1)$. From this we infer that $\vert F'_{x}(f_1) \vert \ge 1$. Recalling $Assumption$ \ref{asm:asm21}, we have $\vert C-F_{x}(f_1) \vert \ge 2$. It follows that $\vert C' \vert \ge 3$. Thus we have,
\begin{equation}
\label{eqn:eqn3}
 \Delta(G) \ge degree_{G_1}(q) \ge \vert S_{xq} \vert +1 \ge \vert C' \vert +1 \ge 4.
\end{equation}

Let
\[ \mbox{$C_1= C_1(f_1)=\{\gamma \in F'_{x}(c_1) \vert\ \exists (1,\gamma)$ bichromatic cycle with respect to coloring $f_1$\}.} \]
\[ \mbox{$C_2= C_2(f_1)=\{\gamma \in F'_{x}(c_1) \vert\ \exists (2,\gamma)$ bichromatic cycle with respect to coloring $f_1$\}.} \]

\noindent Note that from the discussion above, any bichromatic cycle with respect to the coloring $f_1$ contains a vertex $y_i \in N'_{G_1}(x)$. But $degree_{G_1}(y_i) = 2$ and therefore $\vert S_{xy_i} \vert = 1$. Thus $S_{xy_i}$ contains exactly one of the color 1 or 2. Thus with a fixed color $\gamma \in C_1 \cup C_2$ there exists exactly one of $(1,\gamma)$ or $(2,\gamma)$ bichromatic cycle, which implies that the sets $C_1$ and $C_2$ cannot have any element in common (See $figure$ \ref{fig:fig2}). Thus we have,
\begin{equation}
\label{eqn:eqn4}
 C_1 \cap C_2 = \emptyset.
\end{equation}

~~~~~~

\begin{figure}[!h]
\begin{center}
\includegraphics[width= 120 mm, height = 60 mm]{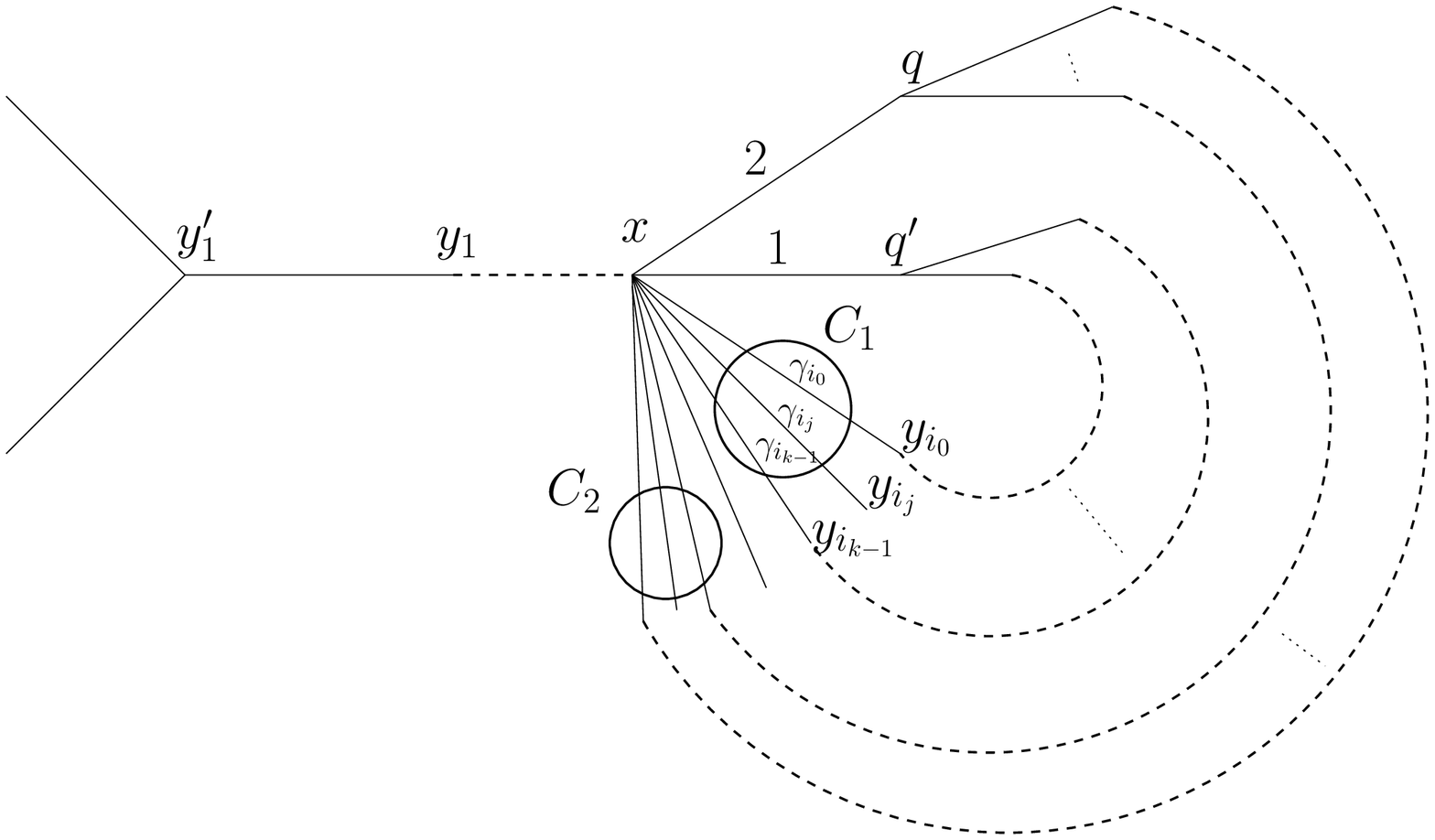}
\caption{Bichromatic cycles of $C_1$ and $C_2$}
\label{fig:fig2}
\end{center}
\end{figure}

~~~~~~

Recall that in view of $Critical\ Path\ Property$ (i.e., $Lemma$ \ref{lem:lem5} or $Lemma$ \ref{lem:lem7}), for a coloring $c_{1,\gamma}$ of $G_{1,\gamma}$, $\forall \gamma \in C'$, there exists a $(1,\gamma,xy_1)$ critical path. With respect to the new coloring $f_{1,\gamma}$, since the colors of only edges $xq$ and $xq'$ are changed, this path starts from $y_1$ and reaches the vertex $q$. But since color 1 is not present at vertex $q$ with respect to the coloring $f_{1,\gamma}$, the bichromatic path ends at vertex $q$. Thus the $(1,\gamma,xy_1)$ critical path with respect to coloring $c_{1,\gamma}$  gets curtailed to $(\gamma,1,q,y_1)$ maximal bichromatic path with respect to $f_{1,\gamma}$. Also note that in view of $Lemma$ \ref{lem:lem10} the length of this $(\gamma,1,q,y_1)$ maximal bichromatic path is at least four. This is true for the coloring $f_1$ also i.e., there exists a $(\gamma,1,q,y_1)$ maximal bichromatic path with respect to $f_{1}$. To see this observe that $f_1$ is obtained from $f_{1,\gamma}$ by putting back the edge $xy_a$, where $c_1(x,y_a)=\gamma$. This cannot alter the $(\gamma,1,q,y_1)$ maximal bichromatic path since $x$ does not belong to this path and also $y_a \neq y_1$. Also in view of Lemma \ref{lem:lem9}, none of the above maximal bichromatic paths contain vertex $y_j$, $\forall y_j \in N'_{G}(x)-\{y_1\}$. Thus the coloring $f_1$ satisfies the following property which we name as $Property\ A$:

~~~~~~

\noindent \textbf{Property A}:
A partial coloring of $G$ is said to satisfy $Property\ A$ iff $\forall \gamma \in C -\{1,2\}$, there exists a $(\gamma,1,q,y_1)$ maximal bichromatic path of length at least four. Moreover none of the above maximal bichromatic paths contain vertex $x$ or vertex $y_i$, where $y_i \in N'_{G}(x)-\{y_1\}$.

~~~~~~~

\begin{clm}
\label{clm:clm3}
There exists a proper coloring $f'_1$ obtained from $f_1$ such that $\forall i \in \{1,2\}$, $\vert C_i\vert \le 1$, where $C_i=C_i(f'_1)$. Moreover $f'_1$ satisfies $Property\ A$.
\end{clm}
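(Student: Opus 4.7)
The plan is to construct $f'_1$ from $f_1$ by a sequence of local recolorings that successively shrink $|C_1|$ and $|C_2|$ to at most $1$ each, while keeping the coloring proper and Property A intact.

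The structural starting point is this: for each $\gamma\in C_1(f_1)$, the associated $(1,\gamma)$-bichromatic cycle must pass through $x$, through $q'$ (the unique color-$1$ neighbor of $x$ in $f_1$), and through the degree-$2$ vertex $y_\gamma\in N'_G(x)$ with $f_1(x,y_\gamma)=\gamma$; in particular the cycle forces $f_1(y_\gamma,y'_\gamma)=1$, as $y_\gamma$ has only one other incident edge. The situation is symmetric for $C_2$. Moreover, by Property A no $(\gamma',1,q,y_1)$-maximal bichromatic path visits $y_\gamma$, and since $y_\gamma y'_\gamma$ is the unique color-$1$ edge at $y'_\gamma$, a short internal-vertex argument (if $y'_\gamma$ were internal to the path it would enter via the color-$1$ edge $y_\gamma y'_\gamma$ and therefore visit $y_\gamma$) shows these paths also avoid $y'_\gamma$.

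The central modification step is then the following. Assume $|C_1|\ge 2$ and pick any $\gamma\in C_1$ to eliminate. Recolor the edge $y_\gamma y'_\gamma$ from $1$ to a color $\eta\notin\{1,2,\gamma\}\cup F_{y'_\gamma}(f_1)$. Choosing $\eta$ outside $\{1,\gamma\}$ guarantees properness at $y_\gamma$, and $\eta\notin F_{y'_\gamma}(f_1)$ gives properness at $y'_\gamma$. The $(1,\gamma)$-cycle is broken so $\gamma$ leaves $C_1$; no new element enters $C_1\cup C_2$, because the recoloring removes a color-$1$ edge (forbidding new $(1,\cdot)$-cycles through $x$) and, since $\eta\neq 2$, introduces no new color-$2$ edge (forbidding new $(2,\cdot)$-cycles through $x$). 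Property A survives because every $(\gamma',1,q,y_1)$-maximal bichromatic path in $f_1$ avoids both $y_\gamma$ and $y'_\gamma$, so the identical edge sequence is still a $(\gamma',1)$-bichromatic path in the modified coloring; maximality at $q$ persists since $1\notin F_q(f_1)$ by Equation~$(\ref{eqn:eqn1})$ and the edges at $q$ are untouched, and Fact~\ref{fact:fact1} identifies this path with the unique $(\gamma',1)$-maximal bichromatic path through $q$. Iterating this step on all but one $\gamma\in C_1$, and symmetrically on $C_2$, yields $f'_1$.

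The main technical obstacle I anticipate is the existence of the color $\eta$: a naive count gives $|C\setminus(\{1,2,\gamma\}\cup F_{y'_\gamma}(f_1))|\ge (\Delta+1)-|\{1,2,\gamma\}\cup F_{y'_\gamma}(f_1)|$, which can drop to zero when $|F_{y'_\gamma}(f_1)|=\Delta$ and $\{2,\gamma\}\cap F_{y'_\gamma}(f_1)=\emptyset$. In that tight case, the plan is to first perform a color exchange at $y'_\gamma$ (guided by Lemma~\ref{lem:lem1} and Fact~\ref{fact:fact3}) to free up an admissible color, and then recolor $y_\gamma y'_\gamma$. Showing that this auxiliary exchange can always be chosen so that it is proper at $y'_\gamma$, does not route any critical path of Property A through $y_\gamma$ or $y'_\gamma$, and introduces no new $(1,\cdot)$- or $(2,\cdot)$-bichromatic cycle through $x$ will be the delicate part; the slack will come from the degree-$\ge 3$ bound on $y'_\gamma$ analogous to Assumption~\ref{asm:asm21} and from the fact, established above, that the critical paths protected by Property A are disjoint from both $y_\gamma$ and $y'_\gamma$.
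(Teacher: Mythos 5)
Your strategy (break each $(1,\gamma)$-cycle by recoloring the edge $y_\gamma y'_\gamma$ away from color $1$) is genuinely different from the paper's, which instead cyclically permutes (``deranges'') the colors of the edges $xy_{i_0},\ldots,xy_{i_{k-1}}$ around $x$; but your version has a real gap exactly at the point you flag, and it is not a marginal case. By equation $(\ref{eqn:eqn2})$, $\vert S_{y_iy'_i}\vert=\Delta-1$ for every $y_i\in N'_G(x)$, so $degree_G(y'_\gamma)=\Delta$ and hence in $f_1$ \emph{exactly one} color of $C$ is missing at $y'_\gamma$ (colors $1$ and $\gamma$ are certainly present, since the $(1,\gamma)$-cycle passes through $y_\gamma y'_\gamma$ and $y'_\gamma$). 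Your recoloring needs that unique missing color to differ from $2$, and nothing rules out that it equals $2$; indeed the paper's Claim \ref{clm:clm4} later establishes precisely the configuration $2\notin S_{y_jy'_j}$ with a $(1,\gamma)$-cycle through $y'_j$, so this situation is entirely consistent with everything known at this stage. Thus your ``tight case'' is the case that must be handled, and the proposed fallback cannot work as described: a color exchange in the paper's sense swaps the colors of two edges incident to a common vertex and therefore leaves the set $F_{y'_\gamma}$ unchanged, so no exchange ``at $y'_\gamma$'' can free up an admissible color there; freeing a color would force you to bring color $2$ onto an edge at $y'_\gamma$, which reintroduces all the problems you are trying to avoid. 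The paper's derangement sidesteps this entirely: it recolors only the edges $xy_{i_j}$ using colors already in $C_1$, properness is automatic because $S_{xy_{i_j}}=\{1\}$, and any newly created cycle would have to be a $(1,\gamma)$-cycle with $\gamma\in C_1$, which is then shown not to exist.

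A secondary issue: even when your $\eta$ exists, you only verify that no new $(1,\cdot)$- or $(2,\cdot)$-cycle arises. Since you do not require $\eta\notin F'_{x}(f_1)$, the recoloring can create an $(\eta,\gamma)$-bichromatic cycle through $x$ (via $xy_\gamma$, $y_\gamma y'_\gamma$, and an $\eta$-colored edge $xy_l$ with $f_1(y_l,y'_l)=\gamma$). This does not contradict the literal wording of Claim \ref{clm:clm3}, but the paper's construction guarantees, and Lemma \ref{lem:lem12} subsequently uses, that every bichromatic cycle of $f'_1$ is of type $(1,\gamma)$ or $(2,\gamma)$ through $x$; a coloring produced by your method need not have this property, so even in the favourable case it would not plug into the remainder of the proof. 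Your Property A argument itself (the protected $(\gamma',1,q,y_1)$ paths avoid $y_\gamma$ by Property A and avoid $y'_\gamma$ because its unique $1$-colored edge is $y_\gamma y'_\gamma$, with endpoints excluded via Lemma \ref{lem:lem11}) is sound.
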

\begin{proof}
If $\vert C_1\vert \le 1$ and $\vert C_2\vert \le 1$, then let $f'_1 = f_1$. If $\vert C_1\vert \le 1$, then let $f''_1 = f_1$. Otherwise if $\vert C_1\vert \ge 2$, then let $C_1 = \{\gamma_{i_0},\gamma_{i_1}, \ldots,\gamma_{i_{k-1}}\}$ and also let $y_{i_j}$ be the vertex such that $f_1(x,y_{i_j}) = \gamma_{i_j}$, $\forall j \in \{0,1,2,\ldots k-1\}$ (see $Figure$ \ref{fig:fig2}). Now let the coloring $f''_1$ be defined as $f''_1(x,y_{i_j}) = \gamma_{i_l}$, where $l = j+1 (mod\ k)$, $\forall j \in \{0,1,2,\ldots k-1\}$ and $f''_1(e)=f_1(e)$ for all other edges. (\emph{Note that we have only shifted the colors of the edges $xy_{i_0},xy_{i_1},\ldots,xy_{i_{k-1}}$ circularly. We call this procedure \bf{deranging of colors}.})

Note that we are changing only the colors of the edges $xy_{i_j}$ for $j = 0,1,2,\ldots,k-1$. Also we are using only the colors $\gamma_{i_j} \in C_1$ for recoloring. Since with respect to the coloring $f_1$, $(1,\gamma_{i_j})$ bichromatic cycle passed through $y_{i_j}$ and $degree_{G_1}(y_{i_j}) = 2$, we have $S_{xy_{i_j}} = \{1\}$. Thus the coloring $f''_1$ is proper.

Since for all $y_{i_j}$, $0 \le j\le k-1$ we have $S_{xy_{i_j}} = \{1\}$ with respect to the coloring $f''_1$, it is clear that any $new$ bichromatic cycle created (in the process of getting $f''_1$ from $f'_1$ ) has to be a $(1,\gamma)$ bichromatic cycles, where $\gamma \in C_1$.

We claim that the coloring $f''_1$ does not have any $(1,\gamma)$ bichromatic cycle for $\gamma \in C_1$. To see this consider a $\gamma \in C_1$, say $\gamma_{i_1}$. There existed a $(1,\gamma_{i_1})$ bichromatic cycle with respect to $f_1$. It contained the edge $xy_{i_1}$. Now with respect to $f''_1$ edge $xy_{i_1}$ is colored with color $\gamma_{i_2}$. Thus the $(1,\gamma_{i_1})$ maximal bichromatic path which contains the vertex $x$ has one end at vertex $y_{i_1}$ since color $\gamma_{i_1}$ is not present at the vertex $y_{i_1}$ with respect to $f''_1$. Thus $(1,\gamma_{i_1})$ bichromatic cycle cannot exist with respect to the coloring $f''_1$. This argument works for all $\gamma \in C_1$ and thus for any color $\gamma \in C_1$, there is no $(1,\gamma)$ bichromatic cycle with respect to $f''_1$.

If $\vert C_2\vert \le 1$, then $f'_1 = f''_1$. Otherwise if $\vert C_2\vert \ge 2$, by performing similar recoloring (now starting with $f''_1$) as we did to get rid of the $(1,\gamma)$ bichromatic cycles, we can get a coloring $f'''_1$ without any $(2,\gamma)$ bichromatic cycle. Now let $f'_1 = f'''_1$. Thus we get a coloring $f'_1$ from $f_1$ which has $\vert C_1\vert \le 1$ and $\vert C_2\vert \le 1$.

Note that we are changing only the colors of the edges $xy_i$, for $y_i \in N'_{G_1}(x)$. But the coloring $f_1$ satisfied $Property\ A$ and hence none of the $(\gamma,1,q,y_1)$ maximal bichromatic paths , $\forall \gamma \in C -\{1,2\}$, contained the vertex $y_i$ or $x$. Thus these bichromatic paths have not been altered (i.e., neither $broken$ nor $extended$) by the recoloring to get $f'_1$ from $f_1$. Thus the coloring $f'_1$ satisfies $Property\ A$.
\end{proof}

\begin{obs}
\label{obs:obs1}
Note that the color of the edge $y_1y'_1$ is unaltered in $f'_1$, i.e., $f'_1(y_1,y'_1)=f_1(y_1,y'_1)=c_1(y_1,y'_1)=1$. Also only the colors of certain edges incident on the vertex $y_i$, where $y_i \in N'_{G}(x)-\{y_1\}$ are modified when we obtained $f'_1$ starting from $c_1$. (\emph{This information is required later in the proof}).
\end{obs}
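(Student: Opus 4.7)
The plan is a bookkeeping argument: I would audit which edges change color at each of the (at most) three stages producing $f'_1$ from $c_1$, namely (i) the color exchange on $xq$ and $xq'$ yielding $f_1$, (ii) the circular deranging of the colors $\gamma_{i_j}\in C_1$ on the edges $xy_{i_j}$ yielding $f''_1$ (skipped when $|C_1|\le 1$), and (iii) the analogous deranging for $C_2$ yielding $f'_1$. The observation then reduces to verifying that no stage recolors the edge $y_1y'_1$, and that the edges recolored in stages (ii) and (iii) all lie among $\{xy_i : y_i \in N'_G(x)\setminus\{y_1\}\}$.

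For the first assertion, I would note that stage (i) only recolors $xq$ and $xq'$, neither of which equals $y_1y'_1$; and stages (ii) and (iii) only recolor edges of the form $xy_{i_j}$, so the edge $y_1y'_1$ is again untouched. Chaining the three stages yields $f'_1(y_1,y'_1)=f_1(y_1,y'_1)=c_1(y_1,y'_1)=1$, as claimed.

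For the second assertion, the main step is to exclude $y_1$ from the index set $\{y_{i_0},\ldots,y_{i_{k-1}}\}$ of each deranging stage. This follows from the fact that $G_1=G-xy_1$: the edge $xy_1$ carries no color under $f_1$, whereas each $y_{i_j}$ is by definition the endpoint of an edge $xy_{i_j}$ that actually carries a color $\gamma_{i_j}\in F'_x(c_1)=F'_x(f_1)$; hence $y_{i_j}\ne y_1$ for every $j$ in both deranging stages. Consequently each edge recolored in stages (ii) and (iii) is of the form $xy_i$ with $y_i \in N'_G(x)\setminus\{y_1\}$, which is indeed incident on such a $y_i$. I expect no substantive obstacle beyond this indexing point; once the exclusion of $y_1$ from each derangement cycle is in place, the observation is immediate from the definition of each of the three recoloring stages.
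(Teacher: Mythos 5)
Your proposal is correct and follows essentially the same route as the paper, which treats this observation as immediate bookkeeping from the construction: the passage $c_1\to f_1$ touches only $xq$ and $xq'$, and the derangements in Claim \ref{clm:clm3} touch only edges $xy_i$ with $y_i\in N'_{G_1}(x)=N'_G(x)-\{y_1\}$ (your point that $y_1$ cannot appear among the $y_{i_j}$ because $xy_1$ is absent, hence uncolored, in $G_1$ is exactly the implicit reason). Note only that, read literally, the second sentence of the observation ignores the recolored edges $xq,xq'$; your stage-by-stage accounting handles this correctly, and the paper itself records the swap separately later (Observation \ref{obs:obs3}).
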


\noindent It is easy to see that $f'_1$ is proper but not valid. It is not valid because, if it is valid then since $f'_1$ satifies $Property\ A$, there are $(\gamma,1,q,y_1)$ maximal bichromatic paths , $\forall \gamma \in C -\{1,2\}$. Thus by $Fact$ \ref{fact:fact1}, for any $\theta \in C-F_{x}(f'_1)$, there cannot be a $(1,\theta,xy_1)$ critical path. Thus by $Fact$ \ref{fact:fact2}, color $\theta$ is valid for the edge $xy_1$. Thus we have a valid coloring for the graph G, a contradiciton. Thus $f'_1$ is not valid. It implies that at least one of $C_1$ or $C_2$ is nonempty. In the next lemma we further refine the proper coloring $f'_1$.

\begin{lem}
\label{lem:lem12}
There exists a proper coloring $h_1$ of $G_1$ obtained from $f'_1$ such that there is at most one bichromatic cycle. Moreover $h_1$ satisfies $Property\ A$.
\end{lem}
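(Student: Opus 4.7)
The plan is to handle the remaining case $|C_1(f'_1)| = |C_2(f'_1)| = 1$; if either of $|C_1|,|C_2|$ is zero then $f'_1$ already has at most one bichromatic cycle, so I take $h_1 := f'_1$ and Property A is inherited unchanged from $f'_1$. So assume $C_1 = \{\gamma_1\}$ and $C_2 = \{\gamma_2\}$, and let $y_a, y_b \in N'_G(x) - \{y_1\}$ be the (distinct) vertices with $f'_1(x, y_a) = \gamma_1$ and $f'_1(x, y_b) = \gamma_2$. My plan is to perform a color exchange (in the sense of Fact \ref{fact:fact3}) between the edges $xy_a$ and $xy_b$ and take $h_1$ to be the resulting coloring. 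This exchange is proper: since $y_a$ has degree $2$ and lies on the $(1,\gamma_1)$-bichromatic cycle, we have $f'_1(y_a, y'_a)=1$ and so $S_{xy_a} = \{1\}$; likewise $S_{xy_b} = \{2\}$; and because $\gamma_1,\gamma_2 \in C'$ we have $\gamma_1 \ne 2$ and $\gamma_2 \ne 1$, so Fact \ref{fact:fact3} applies.

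The next step is to analyze the bichromatic cycles of $h_1$. Every bichromatic cycle of $f'_1$ passes through $x$ (this was established just after Claim \ref{clm:clm2}), so any cycle in $h_1$ that avoids the two recolored edges $xy_a, xy_b$ is inherited from $f'_1$. But the two inherited cycles $(1,\gamma_1)$ and $(2,\gamma_2)$ are both now broken: no fresh $(1,\gamma_1)$-cycle appears through $xy_b$ because $y_b y'_b$ is colored $2$ (not $1$), and symmetrically for $(2,\gamma_2)$ at $y_a$. Hence the only bichromatic cycles that $h_1$ could contain are a $(1,\gamma_2)$-cycle through $xq'$ and $xy_a$, or a $(2,\gamma_1)$-cycle through $xq$ and $xy_b$.

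The main obstacle is showing that at most one of these two candidate cycles coexists in $h_1$. Coexistence would force, in $f'_1$, the vertices $y_a$ and $y_b$ to lie in the same component of the $(1,\gamma_2)$-bichromatic subgraph (yielding the first cycle once $xy_a$ becomes $\gamma_2$) and simultaneously in the same component of the $(2,\gamma_1)$-subgraph. I plan to rule this dual containment out using Property A, which provides $(\gamma_1,1,q,y_1)$ and $(\gamma_2,1,q,y_1)$ maximal bichromatic paths that avoid $x$ and every $y_i \in N'_G(x)-\{y_1\}$, together with Lemma \ref{lem:lem10}'s length bound on the relevant critical paths and Lemma \ref{lem:lem11}'s non-adjacency properties around $x$; the aim is to force either a forbidden edge from some $y'_a$ or $y'_b$ into $N_G(x)$, or a collision between two bichromatic components that Fact \ref{fact:fact1} requires to be disjoint. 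If any stubborn subcase remains in which both cycles persist, I will remove the surplus cycle by one further local modification in the spirit of Claim \ref{clm:clm3}'s deranging of colors on edges incident to $x$.

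Finally, I must verify Property A for $h_1$. The color exchange modifies only $xy_a$ and $xy_b$, with $y_a, y_b \in N'_G(x) - \{y_1\}$. By Property A of $f'_1$, every $(\gamma,1,q,y_1)$ maximal bichromatic path ($\gamma \in C'$) avoids $x$ and every vertex in $N'_G(x)-\{y_1\}$, so all these paths are entirely disjoint from the altered edges and thus persist verbatim in $h_1$. Hence Property A carries over to $h_1$ and the plan is complete.
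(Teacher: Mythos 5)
Your reduction to the case $\vert C_1\vert=\vert C_2\vert=1$, the properness of the swap of the colors of $xy_a$ and $xy_b$ (via $S_{xy_a}=\{1\}$, $S_{xy_b}=\{2\}$ and Fact \ref{fact:fact3}), the observation that the only possible bichromatic cycles of $h_1$ are a $(1,\gamma_2)$ cycle through $xq',xy_a$ or a $(2,\gamma_1)$ cycle through $xq,xy_b$, and the preservation of $Property\ A$ are all fine. The genuine gap is the central claim that at most one of these two candidate cycles can appear: you never prove it, you only list tools you hope will do it. And in fact nothing you cite rules out their coexistence. The two candidate cycles live on the disjoint color pairs $\{1,\gamma_2\}$ and $\{2,\gamma_1\}$, so Fact \ref{fact:fact1} imposes no conflict between them; the $(\gamma,1,q,y_1)$ paths of $Property\ A$ are only forced to avoid $x$ and the vertices $y_i$, so a $(1,\gamma_2)$ cycle through $x,q',y_a,y'_a$ can be vertex-disjoint from the $(\gamma_2,1,q,y_1)$ path, and similarly on the other side; Lemma \ref{lem:lem10} and Lemma \ref{lem:lem11} constrain lengths and adjacencies near $x$ but give no parity or disjointness obstruction to both cycles closing up simultaneously. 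So the step "at most one cycle survives" is unsupported.

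Your fallback also fails: the derangement of Claim \ref{clm:clm3} only destroys several cycles that all share a \emph{common} color ($1$ or $2$), by cyclically shifting the colors of the corresponding edges $xy_{i_j}$, all of whose other endpoints see that common color. If after your swap both a $(1,\gamma_2)$ and a $(2,\gamma_1)$ cycle exist, you are back in exactly the configuration you started from (one color-$1$ cycle and one color-$2$ cycle through $x$), and the only available "derangement" on $\{xy_a,xy_b\}$ is the swap itself, which merely returns you to $f'_1$. This is precisely the difficulty the paper's proof is engineered to avoid: it first proves (Claim \ref{clm:clm4}) that $2\notin S_{y_jy'_j}$ and then recolors the edge $y_jy'_j$ with color $2$, so that the only cycle that can be created is a $(\gamma,2)$ cycle; the bad case then has $\vert C_2\vert=2$, $\vert C_1\vert=0$, i.e.\ both cycles share color $2$, and the derangement trick legitimately finishes the argument. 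Without an argument of this kind (forcing the two surviving cycles onto a common color, or an actual proof that your swap creates at most one cycle), your proof is incomplete.
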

\begin{proof}
By $Claim$ \ref{clm:clm3}, we have $\vert C_1\vert \le 1$ and $\vert C_2\vert \le 1$. If exactly one of $C_1$, $C_2$ is singleton, then let $h_1=f'_1$. Otherwise we have $\vert C_1\vert = 1$ and $\vert C_2\vert = 1$.

\begin{asm}
\label{asm:asm4}
Without loss of generality let $C_1=\{\gamma\}$ and $C_2=\{\theta\}$. Let $f'_1(x,y_j)=\gamma$ and $f'_1(x,y_k)=\theta$. Thus $f'_1(y_j,y'_j)=1$ and $f'_1(y_k,y'_k)=2$, since there are $(1,\gamma)$ and $(2,\theta)$ bichromatic cycles passing through the vertex $x$.
\end{asm}

\begin{clm}
\label{clm:clm4}
Color $2 \notin S_{y_jy'_j}$ 
\end{clm}
\begin{proof}
Suppose not, then $2 \in S_{y_jy'_j}$. Since there is a $(1,\gamma)$ bichromatic cycle passing through $y'_j$, the colors 1  and $\gamma$ are present at $y'_j$. It follows that there exists $\eta \in C-\{1,2,\gamma\}$ missing at $y'_j$. Now recolor edge $y_jy'_j$ with color $\eta$ to get a coloring $f''_1$. If the color $\eta$ is valid for the edge $y_jy'_j$, then let $h_1 = f''_1$ and we are done as the situation reduces to having only one bichromatic cycle (i.e.,$\vert C_2\vert = 1$ and $\vert C_1\vert = 0$ ). If the color $\eta$ is not valid for the edge $y_jy'_j$, then there has to be a $(\gamma,\eta)$ bichromatic cycle that passes through vertex $x$. Let $f''_1(x,y_l)=\eta$. Since $degree_{G}(y_l)=2$, we have $S_{xy_l} = \{f''_1(y_l,y'_l)\}= \{\gamma\}$. Recall that by $Assumption$ \ref{asm:asm21}, $\alpha \in C'-F'_{x}(c_1)$ and thus $\alpha \in C'-F'_{x}(f''_1)$. Clearly $\alpha \neq \eta$.  Recolor the edge $xy_j$ with color $\alpha$ to get a coloring $f'''_1$. Note that the color $\alpha$ is valid for the edge $xy_j$ because if there is a $(\alpha,\eta)$ bichromatic cycle, then it implies that $S_{xy_l} = \{\alpha\}$. But we know that $S_{xy_l}=\{\gamma\}$, a contradiction. Thus let $h_1 = f'''_1$ and the situation reduces to having only one bichromatic cycle (i.e., $\vert C_2\vert = 1$ and $\vert C_1\vert = 0$)
\end{proof}

In view of $Claim$ \ref{clm:clm4}, color 2 is a candidate for the edge $y_jy'_j$. Recolor edge $y_jy'_j$ with color $2$ to get a coloring $f''_1$. If the color $2$ is valid for the edge $y_jy'_j$, then let $h_1=f''_1$ and the situation reduces to having only one bichromatic cycle (i.e., $\vert C_2\vert = 1$ and $\vert C_1\vert = 0$ ). If the color $2$ is not valid for the edge $y_jy'_j$, then there has to be a $(\gamma,2)$ bichromatic cycle created due to the recoloring, thereby reducing the situation to $\vert C_2\vert = 2$ and $\vert C_1\vert = 0$. Now we can recolor the graph using the procedure similar to that in the proof of $Claim$ \ref{clm:clm3} (i.e., derangement of colors in $C_2$) to get a valid coloring $h_1$ without any bichromatic cycles.

The coloring $f'_1$ satisfied $Property\ A$ and hence none of the $(\gamma,1,q,y_1)$ maximal bichromatic paths , $\forall \gamma \in C -\{1,2\}$, contained the vertex $y_j$. Thus none of the $(\gamma,1,q,y_1)$ maximal bichromatic paths will be $broken$ or $curtailed$ in the process of getting $h_1$ from $f'_1$. This is because we are changing only the colors of the edges incident on the vertex $y_j$ or $y_k$ and if a $(\gamma,1,q,y_1)$ maximal bichromatic path gets $broken$ or $curtailed$, it means that the vertex $y_j$ or $y_k$ was contained in those maximal bichromatic path, a contradiction to $Property\ A$ of $f'_1$ since $y_j \in N'_{G}(x)-\{y_1\}$. On the other hand, if any of these paths gets extended, then vertex $y'_j \in \{y_1,q\}$. But in view of $Lemma$ \ref{lem:lem11} (part $(a)$) this is not possible.
Thus the $(\gamma,1,q,y_1)$ maximal bichromatic paths have not been extended. Thus these bichromatic paths have not been altered by the recolorings to get $h_1$ from $f'_1$. Thus the coloring $h_1$ satisfies $Property\ A$.
\renewcommand{\qedsymbol}{\bqed}
\end{proof}

\begin{obs}
\label{obs:obs2}
Note that the color of the edge $y_1y'_1$ is unaltered in $h_1$, i.e., $h_1(y_1,y'_1)= f'_1(y_1,y'_1)=1$ ( by $Observation$ \ref{obs:obs1}). Also only the colors of certain edges incident on the vertex $y_i$, where $y_i \in N'_{G}(x)-\{y_1\}$ are modified.
\end{obs}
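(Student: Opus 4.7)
The plan is to revisit, step by step, the construction of $h_1$ from $f'_1$ carried out in $Lemma$ \ref{lem:lem12} together with its subclaim $Claim$ \ref{clm:clm4}, enumerate exactly which edges have their color changed along the way, and then verify both assertions of the observation.

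First I would dispose of the easy case inside $Lemma$ \ref{lem:lem12}: if at most one of $C_1,C_2$ is a singleton under $f'_1$, then $h_1=f'_1$ and no edge is recolored at all. So assume $C_1=\{\gamma\}$ and $C_2=\{\theta\}$ with $f'_1(x,y_j)=\gamma$ and $f'_1(x,y_k)=\theta$ as in $Assumption$ \ref{asm:asm4}. Walking through $Claim$ \ref{clm:clm4} and the paragraph following it, the only edges whose colors are modified in passing from $f'_1$ to $h_1$ are: (i) the edge $y_jy'_j$ (recolored either with some $\eta$ or with color $2$); (ii) possibly the edge $xy_j$ (recolored with $\alpha$); and (iii) in the final derangement step modelled on the proof of $Claim$ \ref{clm:clm3}, edges of the form $xy_i$ with $y_i \in N'_{G}(x)$ participating in the $(2,\cdot)$ bichromatic cycles of $C_2$.

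Next I would check that every $y$-vertex appearing above lies in $N'_{G}(x)-\{y_1\}$. By construction $y_j$, $y_k$ and each vertex $y_i$ from the derangement step belong to $N'_{G}(x)$; they are all distinct from $y_1$ because the corresponding edges $xy_j$, $xy_k$, $xy_i$ are edges of $G_1$, whereas $xy_1 \notin E(G_1)$. This already yields the second assertion of the observation: every edge whose color changes in going from $f'_1$ to $h_1$ is incident on some $y_i \in N'_{G}(x)-\{y_1\}$.

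For the first assertion, the main point to pin down is that the edge $y_1y'_1$ itself is never among the recolored edges. Since every recolored edge is incident on some $y_i \in N'_{G}(x)-\{y_1\}$, the only way $y_1y'_1$ could be one of them is if $y'_1 \in N_{G}(x)$, i.e.\ $y'_1$ equals some $y_j$, $y_k$ or $y_i$ above. But $y_1 \in N_{G}(x)$ is adjacent to $y'_1$, so this would place two adjacent vertices inside $N_{G}(x)$, directly contradicting $Lemma$ \ref{lem:lem11}(a). Therefore $y_1y'_1$ is untouched during the construction of $h_1$, and combining with $Observation$ \ref{obs:obs1} we obtain $h_1(y_1,y'_1)=f'_1(y_1,y'_1)=1$, as claimed.
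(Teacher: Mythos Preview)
Your verification is correct and follows the natural approach: the paper states this observation without proof, and you supply exactly the bookkeeping that justifies it, walking through the cases of $Lemma$ \ref{lem:lem12} and $Claim$ \ref{clm:clm4} and noting that every modified edge is incident on some $y_i \in N'_{G}(x)-\{y_1\}$. Your use of $Lemma$ \ref{lem:lem11}(a) to rule out $y_1y'_1$ coinciding with one of the recolored edges (in particular with $y_jy'_j$) is the right move and is precisely what the paper's structural lemma is there for.
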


~~~~~~~~~~

\noindent It is easy to see that $h_1$ is proper but not valid. It is not valid because, if it is valid then since $h_1$ satifies $Property\ A$, there are $(\gamma,1,q,y_1)$ maximal bichromatic paths , $\forall \gamma \in C -\{1,2\}$. Thus by $Fact$ \ref{fact:fact1}, for any $\theta \in C-F_{x}(h_1)$, there cannot be a $(1,\theta,xy_1)$ critical path. Thus by $Fact$ \ref{fact:fact2}, color $\theta$ is valid for the edge $xy_1$. Thus we have a valid coloring for the graph G, a contradiciton. Thus $h_1$ is not valid. Then in view of $Lemma$ \ref{lem:lem12}, we make the following assumption:

\begin{asm}
\label{asm:asm5}
Without loss of generality let the only bichromatic cycle in the coloring $h_1$ of $G_1$ pass through the vertex $y_j$, $j \neq 1$. Also let $h_1(x,y_j)=\rho$.
\end{asm}

\noindent We get a coloring $c_j$ of $G_j$ from $h_1$ of $G_1$ by:
\begin{enumerate}
\item Removing the edge $xy_j$.
\item Adding the edge $xy_1$ and coloring it with the color $h_1(x,y_j)=\rho$.
\end{enumerate}

~~~~~~

Note that the coloring $c_j$ is proper since $\rho \neq c_j(y_1,y'_1)=h_1(y_1,y'_1)=1$ (by $Observation$ \ref{obs:obs2}) and $\rho \notin S_{y_1x}(c_j)$ (by the definition of $c_j$). Note that by removing the edge $xy_j$ we have broken the only bichromatic cycle that existed with respect to $h_1$. The coloring $c_j$ is valid because if there is a bichromatic cycle in $G_j$ with respect to $c_j$ then it should contain the edge $xy_1$ and thus it should be a $(1,\rho)$ bichromatic cycle since $c_j(x,y_1)=h_1(x,y_j)=\rho$ and $c_j(y_1,y'_1)=1$. Suppose there exists a $(1,\rho)$ bichromatic cycle in $G_j$ with respect to $c_j$, then by $Fact$ \ref{fact:fact2} there must have been a $(1,\rho,xy_1)$ critical path with respect to $h_1$. But by $Lemma$ \ref{lem:lem12}, the coloring $h_1$ satisfies $Property\ A$ and thus there was a $(\rho,1,q,y_1)$ maximal bichromatic path implying by $Fact$ \ref{fact:fact1} that there cannot be a $(1,\rho,xy_1)$ critical path with respect to $h_1$, a contradiction. It follows that the coloring $c_j$ of $G_j$ is acyclic. Therefore all the Lemmas in previous sections are applicable to the coloring $c_j$ also.

Now we may assume that $c_j(y_j,y'_j) =2$ because if $c_j(y_j,y'_j) =1$, then we can change the color of the edge $y_jy'_j$ to 2 without altering the validity of the coloring since $y_j$ is a pendant vertex in $G_j$. Thus we make the following assumption:

\begin{asm}
\label{asm:asm6}
Without loss of generality let $c_j(y_j,y'_j) =2$. Also recall that $c_j(x,q)=2$ and $c_j(x,q')=1$.
\end{asm}

\noindent \textbf{Remark: }In view of $Assumption$ \ref{asm:asm6}, the $Critical\ Path\ Property$ with respect to the coloring $c_j$ of $G_j$ reads as follows: With respect to the coloring $c_{j,\gamma}$, there exists a $(2,\gamma,xy_j)$ critical path, for all $\gamma \in C'$. The reader may contrast the $Critical\ Path\ Property$ of $c_j$ with that of $c_1$ (See remark after $Assumption$ \ref{asm:asm3}). This correspondence is very important for the proof.

~~~~

\begin{obs}
\label{obs:obs3}
Note that $c_j(x,q)=2$, $c_j(x,q')=1$, $c_j(y_1,y'_1)=1$, $c_j(y_j,y'_j)=2$ and $c_j(x,y_1)= \rho \notin \{1,2\}$. Also if $e$ is an edge such that none of its end points is $x$ or $y_i$, where $y_i \in N'_{G}(x)$, we have $c_j(e) = c_1(e)$.
\end{obs}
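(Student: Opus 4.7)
The plan is to verify each of the six assertions of Observation \ref{obs:obs3} by tracking the sequence of colorings $c_1 \to f_1 \to f'_1 \to h_1 \to c_j$ and recording exactly which edges are modified at each stage. The argument is essentially bookkeeping: there is no new combinatorial content beyond what was already established by Observation \ref{obs:obs1}, Observation \ref{obs:obs2}, Claim \ref{clm:clm3}, Lemma \ref{lem:lem12}, and the explicit definition of $c_j$. The main care needed is to confirm that no transformation in the chain inadvertently alters an edge outside the prescribed neighborhood of $x$.

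First I would collect the ``support'' of each transformation. The step $c_1 \to f_1$ only changes colors on $xq$ and $xq'$, by definition of color exchange. The step $f_1 \to f'_1$ only recolors edges $xy_{i_j}$ with $y_{i_j}\in N'_{G}(x)$, as recorded in Observation \ref{obs:obs1}. The step $f'_1 \to h_1$ only recolors edges incident on $y_j$ or $y_k$ (both in $N'_G(x)-\{y_1\}$), as recorded in Observation \ref{obs:obs2}. Finally, the step $h_1 \to c_j$ removes the edge $xy_j$ and inserts the edge $xy_1$ colored $\rho := h_1(x,y_j)$.

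Next I would read off each claimed equality from this list. For $c_j(x,q)=2$: by Assumption \ref{asm:asm3}, $c_1(x,q)=1$, after the exchange $f_1(x,q)=2$, and none of the subsequent three steps touch the edge $xq$ (since $q\in N''_G(x)$, not $N'_G(x)$, and the final step concerns only $xy_j$ and $xy_1$); the same argument gives $c_j(x,q')=1$. For $c_j(y_1,y'_1)=1$: Observation \ref{obs:obs2} gives $h_1(y_1,y'_1)=1$, and the last step removes/adds only edges incident to $x$, so this value survives. The equality $c_j(y_j,y'_j)=2$ is Assumption \ref{asm:asm6}. For $c_j(x,y_1)=\rho$, this holds by the very definition of the last step; the fact that $\rho\notin\{1,2\}$ uses that $h_1$ is proper at $x$, so $\rho=h_1(x,y_j)$ must differ from $h_1(x,q)=2$ and $h_1(x,q')=1$.

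Finally, for the preservation claim: if $e$ is an edge neither of whose endpoints lies in $\{x\}\cup N'_G(x)$, then $e$ is in particular not incident to $x$, to any $y_i$, to $q$, or to $q'$; by the support list above, no transformation in the chain touches $e$, so $c_j(e)=c_1(e)$. The only subtlety worth double-checking is that the step $f'_1\to h_1$ in Lemma \ref{lem:lem12} is indeed supported on edges incident to $y_j$ or $y_k$ only (and not, say, on $xy_l$ or $y_ly'_l$ in the subcases of Claim \ref{clm:clm4}); but inspecting that proof shows the recolorings used are precisely of the form ``recolor $y_jy'_j$'' or ``recolor $xy_j$'' or a derangement of the colors of $xy_{i_j}$ for $y_{i_j}\in N'_G(x)$, all of which are incident to $x$ or to a vertex of $N'_G(x)$. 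Hence every claim of the observation follows directly.
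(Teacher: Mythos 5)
Your proposal is correct and takes essentially the same route as the paper, which states this observation without proof precisely because it is the bookkeeping you carry out: tracking the supports of the modifications $c_1 \to f_1 \to f'_1 \to h_1 \to c_j$ (all contained in edges incident to $x$ or to vertices of $N'_G(x)$) together with Observations \ref{obs:obs1}--\ref{obs:obs2}, Assumptions \ref{asm:asm3} and \ref{asm:asm6}, and properness of $h_1$ at $x$ for $\rho \notin \{1,2\}$. One harmless overstatement: an edge with no endpoint in $\{x\}\cup N'_G(x)$ may well be incident to $q$ or $q'$ (e.g.\ $qp$), but this is never needed, since the only modified edges touching $q,q'$ are $xq$ and $xq'$, which are incident to $x$.
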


\begin{lem}
\label{lem:lem13}
Coloring $c_{j,\gamma}$ of $G_{j,\gamma}$ satisfies $Property\ A$.
\end{lem}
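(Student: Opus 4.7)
The plan is to show that every $(\gamma, 1, q, y_1)$-maximal bichromatic path guaranteed by Property A of $h_1$ transfers verbatim over to $c_{j,\gamma}$. The guiding observation is that the transformation $h_1 \mapsto c_j \mapsto c_{j,\gamma}$ only modifies edges incident to the vertex $x$: the edge $xy_j$ is removed, the edge $xy_1$ is added with color $\rho$, and possibly one additional edge $xy_a$ at $x$ is deleted when passing from $G_j$ to $G_{j,\gamma}$. Since Property A paths are forbidden from containing $x$, they are insulated from all of these modifications.

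First I would fix any $\gamma \in C - \{1, 2\}$ and invoke $Lemma$ \ref{lem:lem12} to obtain, under $h_1$ in $G_1$, a $(\gamma, 1, q, y_1)$-maximal bichromatic path $P_\gamma$ of length at least four that avoids $x$ and every $y_i \in N'_G(x) - \{y_1\}$. Because $P_\gamma$ contains no edge incident to $x$, it lies inside the common subgraph $G - \{\textrm{edges incident to } x\}$, which sits inside $G_{j,\gamma}$. Since $c_j$ agrees with $h_1$ on every edge not incident to $x$, and $c_{j,\gamma}$ differs from $c_j$ only by the possible deletion of one edge at $x$, the colors of the edges of $P_\gamma$ are identical under $c_{j,\gamma}$ and under $h_1$. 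So the same edge sequence produces a $\gamma$-$1$ bichromatic path in $G_{j,\gamma}$, starting at $q$ with a $\gamma$-colored edge, ending at $y_1$, of length at least four, and avoiding $x$ and all $y_i \in N'_G(x) - \{y_1\}$.

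Next I would verify maximality of $P_\gamma$ at its two endpoints in $c_{j,\gamma}$. At $q$ no incident edge is touched by the transformation, so maximality is inherited directly from $h_1$. At $y_1$, the only edge in $h_1$ was $y_1 y'_1$ of color $1$, and in $c_j$ the edge $xy_1$ of color $\rho$ is newly added. I would split on $\rho$ versus $\gamma$. If $\rho \neq \gamma$, then since properness of $c_j$ at $y_1$ forces $\rho \neq 1 = c_j(y_1,y'_1)$, the edge $xy_1$ is colored outside $\{\gamma, 1\}$ and cannot extend $P_\gamma$; moreover if $\gamma \in F'_x(c_j)$, the deleted edge $xy_a$ has $y_a \neq y_1$ (since $c_j(x,y_1) = \rho \neq \gamma$) and hence does not touch $P_\gamma$ either. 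If $\rho = \gamma$, then $c_j(x,y_1) = \gamma$ puts $\gamma \in F'_x(c_j)$ with $xy_1$ itself being the $\gamma$-colored edge at $x$, so by definition $G_{j,\gamma} = G_j - xy_1$ and the edge $xy_1$ is already absent, leaving $y_1$ with only $y_1 y'_1$ of color $1$.

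The main obstacle is precisely this $\rho = \gamma$ case, where a naive inspection of $c_j$ would extend $P_\gamma$ through $xy_1$ to $x$, destroying the $x$-avoidance clause of Property A. The resolution is the surgical deletion of $xy_1$ dictated by the very definition of $G_{j,\gamma}$ in that branch, which kills the extension at its source. With maximality at both endpoints secured, the length bound and the vertex-avoidance properties pass down unchanged from $P_\gamma$, so $c_{j,\gamma}$ satisfies Property A.
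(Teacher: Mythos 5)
Your proof is correct and follows essentially the same route as the paper: the Property~A paths avoid $x$ and every $y_i \in N'_{G}(x)-\{y_1\}$, while every edit made in passing from $h_1$ to $c_j$ to $c_{j,\gamma}$ is confined to edges incident on $x$, so the paths carry over intact. Your split on $\rho \neq \gamma$ versus $\rho = \gamma$ is exactly the paper's two cases (the case $\gamma=\rho$ being rescued by the deletion of $xy_1$ in forming $G_{j,\rho}$), with your explicit maximality check at the endpoints $q$ and $y_1$ simply spelling out what the paper leaves implicit.
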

\begin{proof}
We consider the following cases:\newline
\noindent \emph{case 1: $\gamma \in C'-{\rho}$}\newline
Recall that the coloring $h_1$ satisfied $Property\ A$. In getting $c_j$ from $h_1$, we have only colored the edge $xy_1$ with color $\rho$ and have discarded the edge $xy_j$. Thus $\forall \gamma \in C'-\{\rho\}$, there exists a $(\gamma,1,q,y_1)$ maximal bichromatic path in $c_j$ also. Noting that by $Property\ A$, the maximal bichromatic path does not contain vertex $x$ or $y_i$, where $\forall y_i \in N'_{G}(x)-\{y_1\}$, we infer that even in $G_{j,\gamma}$ the $(\gamma,1,q,y_1)$ maximal bichromatic path is unaltered.

\noindent \emph{case 2: $\gamma =\rho$}\newline
Then $G_{j,\rho}$ is the graph obtained by removing the edge $xy_1$ from $G_j$ since $c_j(x,y_1)=\rho$. Recall that with respect to the coloring $h_1$ we have a $(\rho,1,q,y_1)$ maximal bichromatic path. Removal of edge $xy_1$ from $G_1$ cannot alter this path since $h_1$ satisfies $Property\ A$ and thus edge $xy_1$ is not in the path. Now the graph obtained is nothing but the graph $G_{j,\rho}$ with respect to the coloring $G_{j,\rho}$. Thus $G_{j,\rho}$ saitsfies $Property\ A$.
\renewcommand{\qedsymbol}{\bqed}
\end{proof}

\noindent \textbf{Property B}:
Let $c_{1,\eta}$ be a partial coloring of $G_{1,\eta}$, for $\eta \in C-\{1,2\}$. Then $c_{1,\eta}$ is said to satisfy $Property\ B$ iff $\forall \gamma \in C -\{1,2\}$, there exists a $(\gamma,2)$ maximal bichromatic path which starts at vertex $q$ and involves the vertex $y'_j$. Also the length of the segment of this bichromatic path between the vertices $q$ and $y'_j$ is at least three. Moreover in none of the above maximal bichromatic paths the segment between the vertices $q$ and $y'_j$ contains vertex $x$ or vertex $y_i$, where $y_i \in N'_{G}(x)$.

\begin{lem}
\label{lem:lem14}
Coloring $c_{1,\eta}$ of $G_{1,\eta}$ satisfies $Property\ B$, for $\eta \in C -\{1,2\}$.
\end{lem}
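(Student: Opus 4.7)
The plan is to transfer a suitable subpath of the critical path guaranteed by the $Critical\ Path\ Property$ for the companion coloring $c_j$ back into $c_{1,\eta}$, using $Observation$ \ref{obs:obs3}. Fix $\eta,\gamma\in C-\{1,2\}$. Because $c_j(y_j,y'_j)=c_j(x,q)=2$ by $Assumption$ \ref{asm:asm6}, the $Critical\ Path\ Property$ applied to $c_j$ yields a $(2,\gamma,xy_j)$ critical path $P$ in $G_{j,\gamma}$, and $Lemma$ \ref{lem:lem10} (with $u=q$) tells us that $P$ has length at least five. Stripping the two terminal color-$2$ edges $xq$ and $y'_jy_j$ from $P$ leaves a $(\gamma,2)$ bichromatic subpath $Q$ from $q$ to $y'_j$ of length at least three; this $Q$ is the candidate segment for $Property\ B$.

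I would then verify that no internal vertex of $P$ equals $x$ or any $y_i\in N'_G(x)$. The vertex $x$ sits only at the starting endpoint of $P$, and any degree-$2$ vertex $y_i\in N'_G(x)$ becoming internal would force $P$ to use both $xy_i$ and $y_iy'_i$, putting $x$ internally as well, a contradiction. Hence every edge of $Q$ has both endpoints outside $\{x\}\cup N'_G(x)$, and by $Observation$ \ref{obs:obs3} the colorings $c_1$ and $c_j$ agree on every such edge. Since $c_{1,\eta}$ differs from $c_1$ only by the possible deletion of a single edge incident to $x$ (namely the $\eta$-colored one, should $\eta\in F'_x(c_1)$), the path $Q$ survives unchanged under $c_{1,\eta}$ as a $(\gamma,2)$ bichromatic path of length at least three from $q$ to $y'_j$ whose interior and endpoints avoid $x$ and every $y_i\in N'_G(x)$.

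The step I expect to be the main obstacle is confirming maximality, i.e.\ that $q$ is really an endpoint (rather than an interior vertex) of the maximal $(\gamma,2)$ bichromatic path of $c_{1,\eta}$ containing $Q$. My strategy is to audit the colors at $q$ under $c_{1,\eta}$: since $c_j$ is proper with $c_j(x,q)=2$, the edge $xq$ is the only color-$2$ edge at $q$ in $c_j$, and by $Observation$ \ref{obs:obs3} every other edge at $q$ retains its $c_j$-color in $c_{1,\eta}$ and hence is not colored $2$, while $c_{1,\eta}(x,q)=c_1(x,q)=1\neq 2$ by $Assumption$ \ref{asm:asm3}. Thus no edge at $q$ is colored $2$ under $c_{1,\eta}$, and the first edge of $Q$ is the unique $\gamma$-colored edge at $q$. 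This forces $q$ to be an endpoint of the maximal $(\gamma,2)$ bichromatic path containing it, so that path begins with $Q$ and therefore involves $y'_j$, establishing $Property\ B$ for $c_{1,\eta}$.
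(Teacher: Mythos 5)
Your proposal is correct and follows essentially the same route as the paper's proof: take the $(2,\gamma,xy_j)$ critical path of length at least five guaranteed for $c_{j,\gamma}$ by the Critical Path Property and Lemma \ref{lem:lem10}, observe that it avoids $x$ and the vertices $y_i \in N'_{G}(x)$ internally (the paper cites Lemma \ref{lem:lem9}, you rederive it directly), strip the terminal color-$2$ edges, and transfer the $q$-to-$y'_j$ segment back to $c_{1,\eta}$ via Observation \ref{obs:obs3}, concluding maximality at $q$ from the absence of a color-$2$ edge there. The only small justification you leave implicit is that an edge $qy_i$ cannot exist (Lemma \ref{lem:lem11}(a)) -- equivalently one can invoke $S_{xq}(c_1)=C'$ from $(\ref{eqn:eqn1})$ -- which is what legitimizes applying Observation \ref{obs:obs3} to every edge at $q$ other than $xq$.
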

\begin{proof}
By $Critical\ Path\ Property$ (i.e., $Lemma$ \ref{lem:lem5} or $Lemma$ \ref{lem:lem7}) and $Lemma$ \ref{lem:lem10}, $\forall \gamma \in C'$, there exists a $(2,\gamma,x,y_j)$ critical path of length at least five in $G_{j,\gamma}$. Also by $Lemma$ \ref{lem:lem9}, these critical paths do not contain vertex $y_i$, $\forall y_i \in N'_{G}(x)-\{y_j\}$. Recall that we obtained $c_j$ from $c_1$ by a series of recolorings. How will the above mentioned critical paths change if we undo all these recolorings and get back $c_1$? Note that in the process of obtaining coloring $c_j$ from $c_1$, we have only changed the colors incident on the vertices $y_i$, where $y_i \in N'_{G}(x)$ and have exchanged the colors of the edges $xq$ and $xq'$ (by $Observation$ \ref{obs:obs3}). Thus only the colors of edge $xq$ and possiblly edge $y_jy'_j$ of these critical paths will get modified when we undo the recolorings. The reader may recall that the first step in getting $c_j$ from $c_1$ was to exchange the colors of edges $xq$ and $xq'$. It follows that with respect to a coloring $c_{1,\eta}$, there exists a $(\gamma,2)$ maximal bichromatic path which $starts\ at$ vertex $q$ and involves the vertex $y'_j$. It also follows that the length of the segment of the bichromatic path between the vertices $q$ and $y'_j$ is at least three. Moreover it is easy to see that none of the above maximal bichromatic paths the segment between the vertices $q$ and $y'_j$ contains vertex $x$ or vertex $y_i$, where $y_i \in N'_{G}(x)$.
\renewcommand{\qedsymbol}{\bqed}
\end{proof}

~~~~~

\subsection{Selection of secondary pivot $p$ and properties of $c_1$ and $c_j$ in the vicinity of $p$}

~~~~~~

Let $N'_{G}(q) = N_{G}(q) \cap (W_1 \cup W_0)$ and $N''_{G}(q) = N_{G}(q) - N'_{G}(q)$. Since $q \in W_2 \cup W_1$ (see $Assumption$ \ref{asm:asm1}) it is easy to see that $\vert N''_{G}(q) \vert \le 2$. Now recall that in view of $(\ref{eqn:eqn2})$ $degree_{G}(q)=\Delta$ and by $(\ref{eqn:eqn3})$, $\Delta \ge 4$. Thus we have $\vert N'_{G}(q) \vert \ge 2$.

Let $p \in N'_{G}(q)$ be such that $p \neq x$. In the rest of the proof, this vertex $p$ will play a central role. Therefore we name it as the $Secondary\ Pivot$. Let $c_1(q,p)=\eta$. Note that $\eta \in C'$ by $(\ref{eqn:eqn1})$. Thus by $Critical\ Path\ Property$ (i.e., $Lemma$ \ref{lem:lem5} or $Lemma$ \ref{lem:lem7}), there exists a $(1,\eta,xy_1)$ critical path with respect to the coloring $c_{1,\eta}$ that passes through the vertex $p$ and clearly $qp$ is the second edge of this critical path. Recalling that this critical path has length at least five (by $Lemma$ \ref{lem:lem10}), we can infer that $p \neq y_1$ and $degree_{G_1}(p)\ge 2$. Now since $p \in W_1 \cup W_0$, there is at most one neighbour of $p$ other than $q$ which is not in $W_0$. If such a vertex exists let it be $p'$. Otherwise clearly ($N_{G}(p) \cap W_0$)$\neq \emptyset$ and let $p' \in N_{G}(p) \cap W_0$. Thus $N_{G}(p)-\{q,p'\} \subseteq W_0$. If $N_{G}(p)-\{q,p'\} \neq \emptyset$, let $N_{G}(p)-\{q,p'\}= \{z_1,z_2,\ldots, z_k\}$. Also $\forall z_i$, let $N_{G}(z_i)=\{p,z'_i\}$ (See $figure$ \ref{fig:fig3}) (At this point the reader may note that the primary pivot $x$ and secondary pivot $p$ are somewhat structurally similar).

\begin{figure}[!h]
\begin{center}
\includegraphics[width= 120 mm,height = 60 mm]{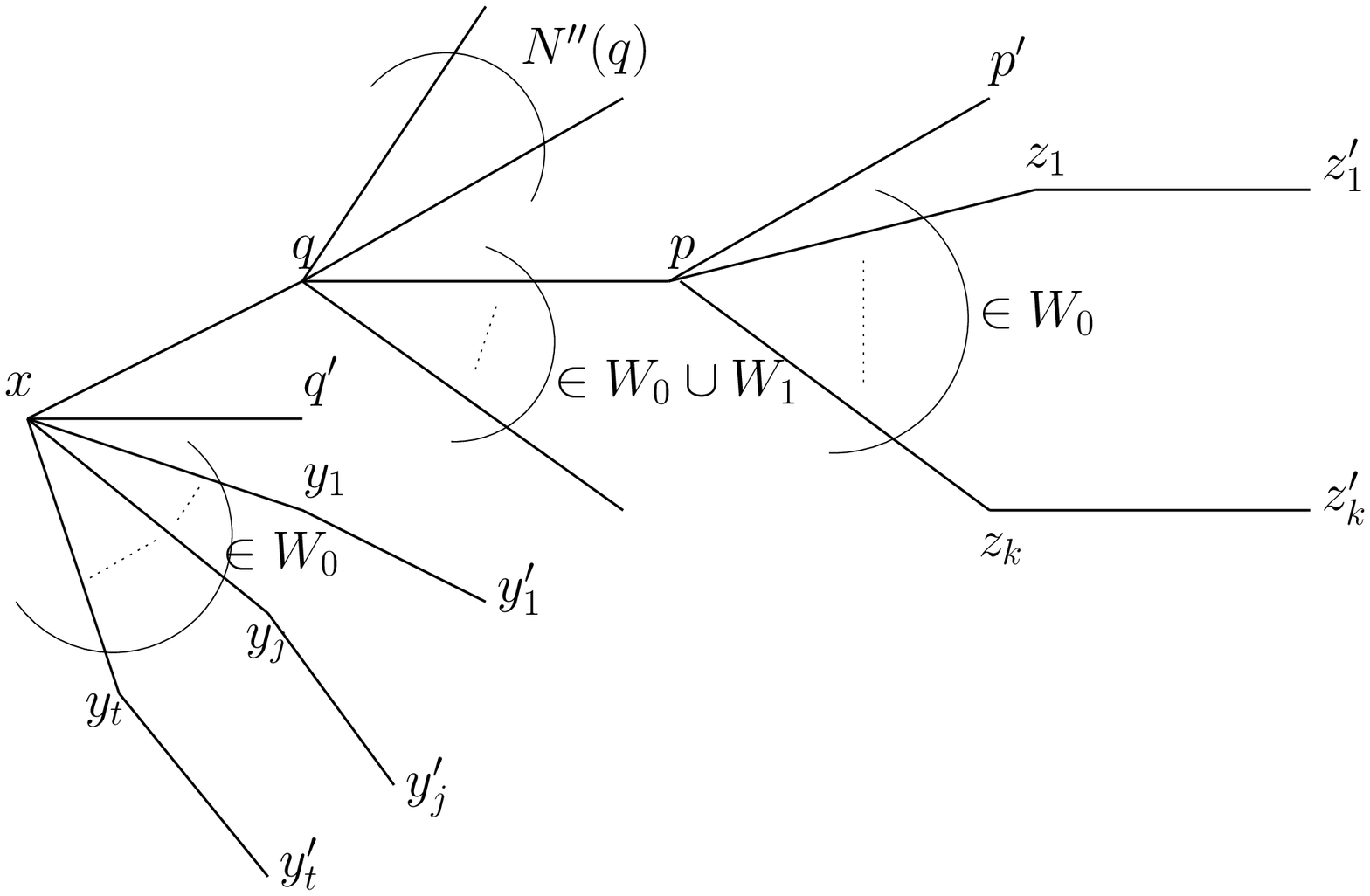}
\caption{Vertex $p$ and its neighbours}
\label{fig:fig3}
\end{center}
\end{figure}

\begin{lem}
\label{lem:lem15}
$x,y_i \notin \{p,p',z_1,\ldots,z_k,z'_1,\ldots,z'_k\}$, for $y_i \in N'_G(x)$.
\end{lem}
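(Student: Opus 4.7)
The plan is to rule out each of the eight possible equalities of the form $v = w$, where $v \in \{x, y_i\}$ and $w \in \{p, p', z_\ell, z'_\ell\}$, by combining Lemma~\ref{lem:lem11}(a),(b) with the degree constraint $degree_G(z_\ell) = 2$ (since $z_\ell \in W_0$) and with the lower bound $degree_G(y'_i) \geq 3$ recorded in Assumption~\ref{asm:asm21}.

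First I would dispose of $p$. By the choice of the secondary pivot, $p \neq x$. If $p = y_i$ for some $y_i \in N'_G(x)$, then $y_i \in N_G(q)$, making $y_i$ and $q$ adjacent elements of $N_G(x)$, contradicting Lemma~\ref{lem:lem11}(a). Next, for $w \in \{p', z_\ell\}$ (both of which lie in $N_G(p)$), the argument is uniform: if $w = x$, then $p \in N_G(x)$, so $p$ and $q$ are adjacent neighbours of $x$, contradicting Lemma~\ref{lem:lem11}(a); if $w = y_i$, then $p \in N_G(y_i) = \{x, y'_i\}$ and $p \neq x$ force $p = y'_i$, so $y'_i \in N_G(q)$ with $q \in N_G(x)$, contradicting Lemma~\ref{lem:lem11}(b).

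Finally, I would handle $z'_\ell$, which is the trickiest case since $z'_\ell$ is one step further from $x$. Suppose $z'_\ell = x$. Then $z_\ell \in N_G(x)$; but $z_\ell$ has degree $2$ while both $q$ and $q'$ have degree at least $3$ (they are in $N''_G(x)$ and $degree_G(q) = \Delta \geq 4$ by $(\ref{eqn:eqn3})$), so $z_\ell = y_i$ for some $i$. The other neighbour of $y_i$ is $y'_i$, which must equal $p$; but then $p = y'_i \in N_G(q)$, once again contradicting Lemma~\ref{lem:lem11}(b). Suppose instead $z'_\ell = y_i$. Then $z_\ell \in N_G(y_i) = \{x, y'_i\}$, and $z_\ell \neq x$ for the degree reason above, so $z_\ell = y'_i$. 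This contradicts Assumption~\ref{asm:asm21}, which guarantees $degree_G(y'_i) \geq 3$, whereas $degree_G(z_\ell) = 2$.

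The main obstacle is the very last case $z'_\ell = y_i$: it is the only one that cannot be closed off using Lemma~\ref{lem:lem11} alone, and requires chaining a structural identification ($z_\ell$ must be $y'_i$) with the degree lower bound on $y'_i$ produced earlier in the proof. Everything else reduces essentially to the observation that $p \notin N_G(x)$ (so $p$, and hence any neighbour of $p$, cannot be adjacent to $x$ through one of the forbidden configurations in Lemma~\ref{lem:lem11}).
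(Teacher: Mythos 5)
Your proof is correct and follows essentially the same route as the paper's: every case is reduced to parts (a) and (b) of Lemma~\ref{lem:lem11} together with the degree bounds on $z_\ell$, $q$, $q'$ and $y'_i$, with the only cosmetic difference that you invoke $degree_G(y'_i)\ge 3$ from Assumption~\ref{asm:asm21} where the paper cites $degree_G(y'_i)=\Delta\ge 4$ from $(\ref{eqn:eqn2})$ and $(\ref{eqn:eqn3})$.
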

\begin{proof}
First note that $x \neq p$, by the definition of $p$. It is easy to see that $x \notin \{p',z_1,\ldots,z_k\}$, by part $(a)$ of $Lemma$ \ref{lem:lem11}. Now $x \notin \{z'_1,\ldots,z'_k\}$ because otherwise $z_i$ will be some $y_i$ and hence $p=y'_i$, But now there is an edge between $q$ and $p$, a contradiciton to part $(b)$ of $Lemma$ \ref{lem:lem11}. Similarly from part $(a)$ of $Lemma$ \ref{lem:lem11}, $y_i \neq p$ and from part $(b)$ of $Lemma$ \ref{lem:lem11}, $y_i \notin \{p',z_1,\ldots,z_k\}$. Now if $y_i \in \{z'_1,\ldots,z'_k\}$, then since $x \neq z_i$, we have $y'_i=z_i$, a contradiciton since $degree_{G}(y'_i)=\Delta \ge 4$ (by $(\ref{eqn:eqn2})$) and $degree_{G}(z_i)=2$. Thus $x,y_i \notin \{p,p',z_1,\ldots,z_k,z'_1,\ldots,z'_k\}$, for $y_i \in N'_G(x)$.
\renewcommand{\qedsymbol}{\bqed}
\end{proof}

\begin{lem}
\label{lem:lem16}
$c_j(q,p) = c_1(q,p)=\eta$.
\end{lem}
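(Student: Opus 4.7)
The plan is to invoke Observation \ref{obs:obs3}, which asserts that as we pass from $c_1$ to $c_j$, the only edges whose colors can change are those having at least one endpoint in $\{x\}\cup N'_G(x)$. Consequently, the entire task reduces to confirming that the edge $qp$ has neither of its endpoints in $\{x\}\cup N'_G(x)$; once that is established, $c_j(q,p)=c_1(q,p)=\eta$ follows immediately.

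For the endpoint $q$, the definitions make this routine: $q\in N''_G(x)$, and since $N_G(x)$ is the disjoint union of $N'_G(x)$ and $N''_G(x)$ with $x\notin N_G(x)$, neither $q=x$ nor $q\in N'_G(x)$ is possible. So $q\notin \{x\}\cup N'_G(x)$ with no further argument.

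For the endpoint $p$, I would appeal to the choices already made during the selection of the secondary pivot. By construction, $p\in N'_G(q)$ was explicitly taken with $p\neq x$, which disposes of the possibility $p=x$. The remaining possibility $p\in N'_G(x)$, i.e.\ $p=y_i$ for some $y_i\in N'_G(x)$, is exactly what Lemma \ref{lem:lem15} rules out (that lemma in turn rests on the structural restrictions of Lemma \ref{lem:lem11}). So no fresh obstacle arises here, and $p\notin\{x\}\cup N'_G(x)$.

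The whole argument is therefore a two-line application: verify the endpoint condition for $qp$ using the definition of $q$ and Lemma \ref{lem:lem15} for $p$, and then quote Observation \ref{obs:obs3} to conclude $c_j(q,p)=c_1(q,p)=\eta$. There is no genuinely hard step; the only piece of real content, namely that $p$ cannot coincide with any neighbour of $x$ in $W_0$, has already been established in Lemma \ref{lem:lem15} and is being cashed in here.
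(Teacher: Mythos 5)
Your proposal is correct and follows essentially the same route as the paper: both reduce the claim via Observation \ref{obs:obs3} to checking that $q,p \notin \{x\}\cup N'_{G}(x)$, with $q$ handled by definition. The only cosmetic difference is that you cite Lemma \ref{lem:lem15} to exclude $p \in N'_G(x)$, whereas the paper invokes part $(a)$ of Lemma \ref{lem:lem11} directly (which is also what Lemma \ref{lem:lem15} rests on), so the two arguments are the same in substance.
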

\begin{proof}
Recall that by $Observation$ \ref{obs:obs3}, only the edges incident on vertices $x$ or $y_i$, where $y_i \in N'_{G}(x)$ are altered while obtaining coloring $c_j$ from $c_1$. Now to show that $c_j(q,p) = c_1(q,p)=\eta$, its enough to verify that $q,p \notin \{x\} \cup N'_{G}(x)$. But this is obvious from part $(a)$ of $Lemma$ \ref{lem:lem11}.

\renewcommand{\qedsymbol}{\bqed}
\end{proof}

\begin{lem}
\label{lem:lem17}
$\{1,2\} \subseteq S_{qp}(c_{1,\eta})$.
\end{lem}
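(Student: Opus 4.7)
The plan is to verify separately that each of the colors $1$ and $2$ lies in $F_p(c_{1,\eta})$; since $c_{1,\eta}(q,p) = \eta \notin \{1,2\}$, this immediately gives the desired membership in $S_{qp}(c_{1,\eta}) = F_p(c_{1,\eta}) - \{\eta\}$. Color $1$ I will read off directly from the $(1,\eta,xy_1)$ critical path in $c_{1,\eta}$ guaranteed by the Critical Path Property. Color $2$ is harder and will be handled by passing to the sibling coloring $c_j$, where the analogous $(2,\eta,xy_j)$ critical path exists, identifying the $2$-colored edge at $p$ on that path, and then transferring the information back to $c_1$ (and thus to $c_{1,\eta}$) using the fact that no edge incident to $p$ was touched by the recolorings that produced $c_j$ from $c_1$.

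For color $1$: the Critical Path Property gives a $(1,\eta,xy_1)$ critical path $P$ in $c_{1,\eta}$, and by Lemma~\ref{lem:lem10} (with $u=q$ and $\mu=1$) it has length at least five. Its first edge is the unique edge at $x$ colored $1$, which is $xq$ because $F'_x(c_1)$ is disjoint from $F''_x(c_1) = \{1,2\}$ (Assumption~\ref{asm:asm1}). Its second edge is the unique edge at $q$ colored $\eta$; by equation~(\ref{eqn:eqn1}) we have $F_q(c_1) = C' \cup \{1\} = C - \{2\}$, so $\eta$ appears on exactly one edge at $q$, and that edge is $qp$ since $c_1(q,p) = \eta$. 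Because $P$ has length at least five, $p$ is an internal vertex of $P$, and the edge of $P$ leaving $p$ (after $qp$) is colored $1$. Hence $1 \in F_p(c_{1,\eta})$.

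For color $2$: I will apply the Critical Path Property to $c_{j,\eta}$, using $c_j(x,q) = 2 = c_j(y_j,y'_j)$ (Assumption~\ref{asm:asm6}) and $c_j(q,p) = \eta$ (Lemma~\ref{lem:lem16}); together with Lemma~\ref{lem:lem10} this produces a $(2,\eta,xy_j)$ critical path $P'$ in $c_{j,\eta}$ of length at least five. The same forcing as above pins down the first two edges of $P'$ to be $xq$ and $qp$, so the third edge is incident to $p$ and colored $2$ in $c_j$. By Observation~\ref{obs:obs3}, the only edges whose colors differ between $c_1$ and $c_j$ are incident to $x$ or to some $y_i \in N'_G(x)$, and Lemma~\ref{lem:lem15} guarantees that $p$ belongs to neither set. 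Consequently the colors on edges incident to $p$ are identical in $c_j$, $c_1$, and $c_{1,\eta}$, giving $2 \in F_p(c_{1,\eta})$.

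The only delicate step is the forcing that the second edge of each critical path is exactly $qp$; this relies on extracting from equation~(\ref{eqn:eqn1}) that the color $\eta \in C'$ is present at $q$ on a unique edge, and that that edge is $qp$. Given this, the length-$\ge 5$ bound from Lemma~\ref{lem:lem10} then guarantees a genuine third edge dangling off $p$ with the right color, which is what the statement requires.
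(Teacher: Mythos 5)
Your proof is correct, and while the ``$1 \in S_{qp}$'' half is exactly the paper's argument (Critical Path Property plus Lemma~\ref{lem:lem10} forcing $xq,qp$ to be the first two edges of the $(1,\eta,xy_1)$ critical path), your ``$2 \in S_{qp}$'' half takes a genuinely different route. The paper simply invokes Lemma~\ref{lem:lem14} (Property~B): the $(\eta,2)$ maximal bichromatic path of $c_{1,\eta}$ starting at $q$ with first edge $qp$ and segment length at least three immediately puts a $2$-colored edge at $p$. You instead go back to the source of Property~B: you read off the $2$-colored edge at $p$ from the $(2,\eta,xy_j)$ critical path in $c_{j,\eta}$ (Critical Path Property of $c_j$ plus Lemma~\ref{lem:lem16} pinning the second edge to $qp$), and then transfer just that one edge's color back to $c_1$ via Observation~\ref{obs:obs3} and Lemma~\ref{lem:lem15}. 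This is a localized version of the path-level transfer that the paper packages once and for all in Lemma~\ref{lem:lem14}; it is self-contained and arguably more economical for this particular lemma, but it would not suffice later where the full length/position information of Property~B is used (e.g.\ in Lemma~\ref{lem:lem20} and the selection of $\theta$), whereas the paper's route amortizes that work. Two small points of care: when you apply Observation~\ref{obs:obs3} you need \emph{both} endpoints of the relevant edge to avoid $\{x\} \cup N'_G(x)$, not merely $p$; this does hold, since the other endpoint lies in $\{q,p',z_1,\ldots,z_k\}$ and Lemma~\ref{lem:lem15} (together with $q \in N''_G(x)$) covers it --- the same check the paper performs in Lemmas~\ref{lem:lem16} and~\ref{lem:lem19}. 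Also, uniqueness of the $\eta$-colored edge at $q$ needs only properness of the coloring; your appeal to $(\ref{eqn:eqn1})$ there is harmless but unnecessary.
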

\begin{proof}
By $Lemma$ \ref{lem:lem10}, we know that $1 \in S_{qp}(c_{1,\eta})$ since $qp$ is only the second edge of the $(1,\eta,xy_1)$ critical path which is guaranteed to have length at least five with respect to the coloring $c_{1,\eta}$ of $G_{1,\eta}$. Now by $Lemma$ \ref{lem:lem14}, with respect to $c_{1,\eta}$ there exists a $(\eta,2)$ maximal bichromatic path which starts at vertex $q$ and contains vertex $y'_j$. Moreover the segment of the bichromatic path between the vertices $q$ and $y'_j$ is of length at least three with respect to $c_{1,\eta}$. Since this $(\eta,2)$ maximal bichromatic path starts with edge $qp$ colored $\eta$, we infer that $2 \in S_{qp}(c_{1,\eta})$. Thus $\{1,2\} \subseteq S_{qp}(c_{1,\eta})$.

\renewcommand{\qedsymbol}{\bqed}
\end{proof}

\noindent \textbf{Remark: }In view of $Lemma$ \ref{lem:lem17}, $degree_{G} \ge 3$. Therefore $p \notin W_0$. It follows that $p \in W_1$. It is interesting to note that $p$ could have been selected as the $Primary\ Pivot$ instead of $x$. The reader may want to reread the procedure for selecting the primary pivot given at the beginning of Section $3$. With respect to this procedure vertex $p$ is symmetric to vertex $x$ and thus is an equally eligible candidate to be the primary pivot. It follows that the structure of the minimum counter example at the vicinity of $p$ is symmetric to the structure at the vicinity of $x$. More specifically we have the following Lemma, corresponding to $Lemma$ \ref{lem:lem11}:

\begin{lem}
\label{lem:lem18}
The minimum counter example $G$ satisfies the following properties,
\begin{enumerate}
\item[(a)] $\forall u,v \in N_{G}(p)$, $(u,v) \notin E(G)$.
\item[(b)] $\forall z_i \in N_{G}(p)-\{q,p'\}$ and $\forall v \in N_{G}(p)$, we have $(v,z'_i) \notin E(G)$.
\end{enumerate}
\end{lem}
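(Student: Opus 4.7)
The plan is to exploit the symmetry noted in the remark preceding the lemma: the secondary pivot $p$ satisfies exactly the same structural conditions that made $x$ eligible to be chosen as the primary pivot. Once this symmetry is made precise, the argument of Lemma \ref{lem:lem11} transplants verbatim with $(p,q,p',z_i,z'_i)$ in the roles of $(x,q,q',y_i,y'_i)$.

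First I would verify formally that $p$ qualifies as a primary pivot. From Lemma \ref{lem:lem17} and the paragraph immediately following, $p \in W_1$; moreover, because $p \in W_1$ we have $\vert N''_G(p)\vert \le 2$, and $q \in N''_G(p)$ is a vertex of $W_2 \cup W_1$ (by Assumption \ref{asm:asm1}) of degree $\Delta \ge 4$ in $G$. So $p$ meets exactly the defining criterion used for $x$ in Section 3: a $W_1$ vertex adjacent to a vertex of $W_2 \cup W_1$. This means the whole minimum-counterexample machinery developed for $x$ can be redeveloped with $p$ in its place.

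Next I would re-run the lemma chain with $p$ playing the role of $x$. For each $z_i \in N_G(p)-\{q,p'\} \subseteq W_0$, the graph $G$ minus the edge $pz_i$ is a $2$-degenerate graph with fewer edges and so (by induction) admits an acyclic $(\Delta+1)$-edge-coloring. The analogues of Lemmas \ref{lem:lem2}--\ref{lem:lem10} then apply: they force $\vert N''_G(p)\vert = 2$, so $N''_G(p) = \{q,p'\}$, give the coloring constraints at $p$ corresponding to equations (\ref{eqn:eqn1})--(\ref{eqn:eqn2}), and yield the corresponding Critical Path Property and length-$\ge 5$ condition at $p$. With this parallel structure in place, parts (a) and (b) follow by copying the three-case argument of Lemma \ref{lem:lem11}: for (a), splitting on whether $u,v$ lie in $N'_G(p)$ or $N''_G(p)$, and deriving the contradiction from a degree comparison (using $\deg_G(z'_i) \ge 3$, the analogue of (\ref{eqn:eqn2})) or from a short critical path forbidden by the analogue of Lemma \ref{lem:lem10}; for (b), by splitting on $v \in N'_G(p)$ versus $v \in \{q,p'\}$ and applying the same bichromatic-path/critical-path arguments.

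The main obstacle, and really the only non-routine point, is making the symmetry completely watertight: one must check that Lemmas \ref{lem:lem2}--\ref{lem:lem10} do not secretly depend on $x$ being the specific vertex selected at the start, only on the defining property of being a $W_1$-vertex with the right kind of neighbour. Inspection shows each of those lemmas uses only local properties of $x$ and the inductively colored graph $G_i$, so the transplant is legitimate and no new ideas beyond those already in Lemma \ref{lem:lem11} are required.
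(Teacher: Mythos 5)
Your proposal is correct and follows essentially the same route as the paper: the paper justifies this lemma purely by the symmetry remark preceding it (that $p \in W_1$ is adjacent to $q$ and hence could have been chosen as the primary pivot, so Lemma \ref{lem:lem11} and the machinery of Lemmas \ref{lem:lem2}--\ref{lem:lem10} transfer verbatim with $p,p',z_i,z'_i$ in place of $x,q',y_i,y'_i$), which is exactly the argument you spell out. Your additional check that those lemmas use only the local structure of a $W_1$-pivot and the minimality of $G$ is precisely what makes the paper's asserted symmetry legitimate.
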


This Lemma is not explicitly used in the proof, but we believe that this information will help the reader to visualize the situation better.

~~~~~~

In view of $Lemma$ \ref{lem:lem17} let $e_1$ and $e_2$ be the edges incident on $p$ such that $c_{1,\eta}(e_1)=1$ and $c_{1,\eta}(e_2)=2$. Then we claim the following:

\begin{lem}
\label{lem:lem19}
$c_{j,\eta}(e_1)=1$ and $c_{j,\eta}(e_2)=2$.
\end{lem}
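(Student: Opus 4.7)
The plan is to show that the edges $e_1$ and $e_2$ are edges whose colors are untouched in the transition from $c_1$ to $c_j$, so that their colors (and hence their interpretations under $c_{1,\eta}$ and $c_{j,\eta}$) coincide.

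First I would locate $e_1$ and $e_2$ precisely. Both are incident on $p$, so their other endpoints lie in $N_G(p) = \{q,p',z_1,\ldots,z_k\}$. Since $c_{1,\eta}(q,p) = c_1(q,p) = \eta \in C' = C - \{1,2\}$, the edge $qp$ carries neither color $1$ nor $2$, so neither $e_1$ nor $e_2$ can be $qp$. Hence each of $e_1, e_2$ is an edge from $p$ to some vertex in $\{p', z_1, \ldots, z_k\}$.

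Next I invoke Lemma \ref{lem:lem15}, which says $x$ and every $y_i \in N'_G(x)$ lie outside $\{p,p',z_1,\ldots,z_k,z'_1,\ldots,z'_k\}$. Thus neither endpoint of $e_1$ or $e_2$ is $x$, and neither endpoint is a vertex of $N'_G(x)$. Applying Observation \ref{obs:obs3} (which says $c_j(e) = c_1(e)$ for every edge $e$ not touching $\{x\} \cup N'_G(x)$), this gives $c_j(e_1) = c_1(e_1) = 1$ and $c_j(e_2) = c_1(e_2) = 2$.

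Finally, to translate from $c_1, c_j$ to $c_{1,\eta}, c_{j,\eta}$: by the definition of $c_{i,\eta}$, the coloring $c_{i,\eta}$ agrees with $c_i$ on every edge that is not the (possibly removed) $\eta$-colored edge incident on $x$. Since $e_1$ and $e_2$ are not incident on $x$ at all, $c_{1,\eta}(e_r) = c_1(e_r)$ and $c_{j,\eta}(e_r) = c_j(e_r)$ for $r \in \{1,2\}$, yielding $c_{j,\eta}(e_1) = 1$ and $c_{j,\eta}(e_2) = 2$. No step looks difficult; the only subtlety is making sure one uses Lemma \ref{lem:lem15} to rule out $e_1$ or $e_2$ having an endpoint in $N'_G(x)$, since those are precisely the other vertices whose incident edges can be recolored when one passes from $c_1$ to $c_j$.
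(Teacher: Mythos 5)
Your proposal is correct and follows essentially the same route as the paper: both rest on Lemma \ref{lem:lem15} together with Observation \ref{obs:obs3} to conclude that the edges $e_1,e_2$ (incident on $p$, with other endpoints in $\{p',z_1,\ldots,z_k\}$) avoid $\{x\}\cup N'_G(x)$ and hence keep their colors when passing from $c_1$ to $c_j$ and to the derived colorings $c_{1,\eta},c_{j,\eta}$. Your explicit exclusion of the edge $qp$ and the careful handling of the $c_i$ versus $c_{i,\eta}$ distinction are minor refinements of the same argument, not a different method.
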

\begin{proof}
Recall that by $Observation$ \ref{obs:obs3}, only the edges incident on vertices $x$ or $y_i$, where $y_i \in N'_{G}(x)$ are altered while obtaining coloring $c_j$ from $c_1$. Let $e_1=(p,z_{i_1})$ and $e_2=(p,z_{i_2})$. Now to show that $c_{j,\eta}(e_1)=1$ and $c_{j,\eta}(e_2)=2$, it is enough to verify that $p,z_{i_1}),z_{i_2}) \notin \{x\} \cup N'_{G}(x)$. But this true by $Lemma$ \ref{lem:lem15}.

\renewcommand{\qedsymbol}{\bqed}
\end{proof}

\begin{lem}
\label{lem:lem20}
$c_{1,\eta}(p,p') \in \{1,2\}$ $($In other words, one of the edge $e_1$ or $e_2$ is $pp'$. By $Lemma$ \ref{lem:lem19}, this also implies that $c_{j,\eta}(p,p')=c_{1,\eta}(p,p') \in \{1,2\})$.
\end{lem}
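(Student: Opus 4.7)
The plan is to argue by contradiction. Suppose $c_{1,\eta}(p,p') = \zeta \notin \{1,2\}$; since $c_1(p,q) = \eta$, propriety forces $\zeta \ne \eta$, so $\zeta \in C' - \{\eta\}$. By Lemma \ref{lem:lem17} there are edges $e_1, e_2$ at $p$ colored $1$ and $2$; neither can be $pq$ or $pp'$, so both must go to degree-two neighbors: write $e_1 = p z_a$, $e_2 = p z_b$ with $z_a, z_b \in W_0$. The $(1,\eta,xy_1)$ critical path guaranteed by the Critical Path Property, which has length at least five by Lemma \ref{lem:lem10}, forces $c_1(z_a,z'_a) = \eta$; reading Lemma \ref{lem:lem14} for the $(\eta,2)$ maximal bichromatic path out of $q$ similarly forces $c_1(z_b,z'_b) = \eta$.

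The natural modification is the color exchange on the edges $pz_a$ and $pz_b$ inside $c_1$; call the result $c'$. Propriety is immediate from Fact \ref{fact:fact3} once one observes $S_{pz_a}(c_1) = S_{pz_b}(c_1) = \{\eta\}$. Lemma \ref{lem:lem1}, applied with $u=p$, $i=z_a$, $j=z_b$, $ab=xy_1$ and with the two colors in the critical path being $1$ and $\eta$ (all hypotheses verified via the Critical Path Property and Lemma \ref{lem:lem15}), then ensures that no $(1,\eta,xy_1)$ critical path survives in $c'$. If $c'$ were still acyclic, the Critical Path Property applied to this valid coloring of $G_1$ would demand exactly such a critical path, a contradiction. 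Hence $c'$ must contain a bichromatic cycle.

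Because only $pz_a$ and $pz_b$ have been modified and the remaining edges at their degree-two endpoints are colored $\eta$, the cycle is either a $(2,\eta)$-cycle through $pz_a$ or a $(1,\eta)$-cycle through $pz_b$. The first is ruled out immediately: such a cycle must close at $p$ via the unique $\eta$-edge $pq$, so its penultimate edge would be a color-$2$ edge at $q$, but $F_q(c_1) = C - \{2\}$ by equation (\ref{eqn:eqn1}). So $c'$ must contain a $(1,\eta)$-cycle through $pz_b$. Tracing it backwards, it must close at $p$ via $pq$, then via $q$'s unique color-$1$ edge $xq$, and then via a color-$\eta$ edge at $x$. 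This forces $\eta \in F'_x(c_1)$, so $c_1(x,y_a) = \eta$ for some $y_a \in N'_G(x)$, and Lemma \ref{lem:lem2} together with the cycle's alternation pattern forces $c_1(y_a,y'_a)=1$.

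The hardest step is squeezing a contradiction out of this rigid structural description. The plan is to mirror the whole argument inside the coloring $c_j$: by Lemma \ref{lem:lem19} and Assumption \ref{asm:asm6}, $c_j(pz_a)=1$ and $c_j(pz_b)=2$ still hold, but $c_j(x,q)=2$ swaps the roles of colors $1$ and $2$ at $x$ and yields $F_q(c_j) = C - \{1\}$. Performing the same exchange in $c_j$, now applied against the $(2,\eta,xy_j)$ critical path, rules out $(1,\eta)$-cycles (since $q$ has no color-$1$ edge in $c_j$) and forces a $(2,\eta)$-cycle through $pz_a$, producing some $y_c \in N'_G(x)$ with $c_j(x,y_c) = \eta$ and $c_j(y_c,y'_c)=2$. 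Observation \ref{obs:obs3} pins down exactly where $c_j$ and $c_1$ may differ (only on edges incident to $x$ or to some $y_i \in N'_G(x)$), and matching up the two forced cycle configurations against the constraints imposed by Lemma \ref{lem:lem11} and the explicit construction of $c_j$ from $c_1$ in Lemma \ref{lem:lem12} should yield the required contradiction.
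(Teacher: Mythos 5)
Your first two steps are fine and match the paper's opening moves: forcing $e_1=pz_a$, $e_2=pz_b$ with $c(z_a,z'_a)=c(z_b,z'_b)=\eta$, exchanging the colors of $pz_a$ and $pz_b$, and ruling out a $(2,\eta)$ cycle via $2\notin F_q$. (One repair is needed even here: the Critical Path Property and Lemma \ref{lem:lem1} live in $G_{1,\eta}$ with the coloring $c_{1,\eta}$, not in $G_1$ with $c_1$ --- when $\eta\in F'_x(c_1)$ there is no $(1,\eta,xy_1)$ critical path in $G_1$ at all, since $x$ then carries an $\eta$-edge and cannot be an endpoint of the maximal bichromatic path. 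This is fixable by applying Lemma \ref{lem:lem1} to $c_{1,\eta}$ and noting that the exchange commutes with discarding the $\eta$-edge at $x$.) The genuine gap is the last paragraph. When the exchanged coloring of $G_1$ fails to be acyclic, all you have extracted is: some $y_a\in N'_G(x)$ with $c_1(x,y_a)=\eta$ and $c_1(y_a,y'_a)=1$; mirroring in $c_j$ gives some $y_c$ with $c_j(x,y_c)=\eta$ and $c_j(y_c,y'_c)=2$. These two facts are not contradictory, and ``matching them up'' does not obviously make them so: the passage from $c_1$ to $c_j$ (swap on $xq,xq'$, the derangement of Claim \ref{clm:clm3}, the recolorings of Lemma \ref{lem:lem12}, the replacement of $xy_j$ by $xy_1$) freely permutes the colors on the edges $xy_i$ and may recolor $y_jy'_j$, so the $\eta$-colored edge at $x$ can point to different vertices $y_a\neq y_c$ in the two colorings, and by Lemma \ref{lem:lem2} their pendant edges may legitimately be colored $1$ and $2$ respectively. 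The two forced cycle configurations otherwise live on edges away from $x$ and the $y_i$'s, where $c_1$ and $c_j$ agree, and they use disjoint color pairs $(1,\eta)$ and $(2,\eta)$; I see no contradiction forthcoming from Lemma \ref{lem:lem11} and Observation \ref{obs:obs3} alone, and you do not supply one --- ``should yield the required contradiction'' is where the proof stops being a proof.

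It is worth seeing how the paper gets past exactly this obstruction. It performs the exchange inside $G_{1,\eta}$ with $c_{1,\eta}$, where validity of the exchanged coloring is automatic (Claim \ref{clm:clm5}): the troublesome $(1,\eta)$ cycle through $x$ cannot arise because $\eta\notin F_x(c_{1,\eta})$ by construction of $G_{1,\eta}$. The price is that the Critical Path Property only applies to colorings of $G_{1,\eta}$ that are derivable from a valid coloring of $G_1$, so the substantive work is Claim \ref{clm:clm7}: put back the edge $xy_k$ (the $\eta$-colored edge at $x$), color it $\eta$, and if a $(\eta,1)$ bichromatic cycle appears, recolor $xy_k$ with a color $\alpha\in C-F_x(c_1)$, using the fact that the $(1,\alpha,xy_1)$ critical path survives the exchange together with Fact \ref{fact:fact1} to certify validity. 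Your proposal has no counterpart to this extension argument; the case you defer to the $c_j$-mirror is precisely the case Claim \ref{clm:clm7} is designed to handle, so the proof as proposed is incomplete at its decisive step.
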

\begin{proof}
Suppose not. Then $e_1 \neq pp'$ and $e_2 \neq pp'$. Without loss of generality let $e_1=(p,z_1)$ and $e_2=(p,z_2)$.  Thus $c_{1,\eta}(p,z_1)= 1$ and $c_{1,\eta}(p,z_2)= 2$. By $Lemma$ \ref{lem:lem10} there exists a $(1,\eta,xy_1)$ critical path of length at least five with respect to $c_{1,\eta}$. This implies that $c_{1,\eta}(z_1,z'_1)= \eta$. Now by $Lemma$ \ref{lem:lem14}, with respect to $c_{1,\eta}$ there exists a $(\eta,2)$ maximal bichromatic path which starts at vertex $q$ and contains vertex $y'_j$. Moreover the segment of this bichromatic path between the vertices $q$ and $y'_j$ is of length at least three with respect to $c_{1,\eta}$. Since $pz_1$ is only the second edge of this path, we can infer that $c_{1,\eta}(z_2,z'_2)= \eta$.

Now with respect to the coloring $c_{1,\eta}$, we exchange the colors of the edges $pz_1$ and $pz_2$ to get a coloring $c'_{1,\eta}$.

\begin{clm}
\label{clm:clm5}
Coloring $c'_{1,\eta}$ is valid.
\end{clm}
\begin{proof}
Note that $c'_{1,\eta}$ is proper since $c_{1,\eta}(z_1,z'_1)= \eta$ and $c_{1,\eta}(z_2,z'_2)= \eta$. Now the coloring $c'_{1,\eta}$ is valid because otherwise there has to be a $(\eta,1)$ or $(\eta,2)$ bichromatic cycle since only the colors of the edges $pz_1$ and $pz_2$ are altered. Thus such a bichromatic cycle has to contain the edge $qp$ since $c'_{1,\eta}(q,p)=\eta$. From $(\ref{eqn:eqn1})$, we can infer that color $2 \notin F_q(c'_{1,\eta})$. But if there exists a bichromatic cycle with respect to the coloring $c'_{1,\eta}$, it has to contain vertex $q$. From this we can infer that it has to be a $(\eta,1)$ bichromatic cycle. This means that the cycle has to contain the vertex $x$ since $c'_{1,\eta}(x,q)=1$. But we know by defintion of $c_{1,\eta}$ that $\eta \notin F_x(c_{1,\eta})=F_x(c'_{1,\eta})$. Thus there does not exist a $(\eta,1)$ bichromatic cycle with respect to the coloring $c'_{1,\eta}$. We conclude that the coloring $c'_{1,\eta}$ of $G_{1,\eta}$ is valid. 
\end{proof}

\begin{clm}
\label{clm:clm6}
With respect to the partial coloring $c'_{1,\eta}$, there does not exist any $(1,\eta,xy_1)$ critical path.
\end{clm}
\begin{proof}
Now since $c'_{1,\eta}$ is proper, taking $u=p$, $i=z_1$, $j=z_2$, $ab=xy_1$, $\lambda = 1$, $\xi = \eta$ and noting that $\{x,y_1\} \cap \{z_1,z_2\} = \emptyset$ (by $Lemma$ \ref{lem:lem15}), $Lemma$ \ref{lem:lem1} can be applied. There existed a $(1,\eta,xy_1)$ critical path containing vertex $p$ in coloring $c_{1,\eta}$. By $Lemma$ \ref{lem:lem1}, we infer that there cannot be any $(1,\eta,xy_1)$ critical path in the coloring $c'_{1,\eta}$.
\end{proof}

\begin{clm}
\label{clm:clm7}
There exists a valid coloring $c'_1$ of $G_1$ such that the coloring $c'_{1,\eta}$ of $G_{1,\eta}$ is derivable from $c'_1$.
\end{clm}
\begin{proof}
It is enough to show that we can extend the coloring $c'_{1,\eta}$ of $G_{1,\eta}$ to a valid coloring $c'_1$ of $G_1$. If $\eta \in C-F_x(c_1)$, then by definition $G_{1,\eta}=G_1$ and thus $c'_1=c'_{1,\eta}$. Otherwise let $y_k \in N'_{G}(x)$ be the vertex such that $c_1(x,y_k)=\eta$. Note that $k \neq 1$. Recall that $c_{1,\eta}$ is obtained by discarding the color on the edge $xy_k$. Thus it is enough to extend the coloring $c'_{1,\eta}$ to $c'_1$ by assigning an appropriate color to the edge $xy_k$.

Note that there exists a $(1,\alpha,xy_1)$ critical path with respect to $c_{1,\eta}$, for $\alpha \in C-F_x(c_1)$ (by $Lemma$ \ref{lem:lem5}). Clearly $\alpha \neq \eta$. We claim that the $(1,\alpha,xy_1)$ critical path exists even with respect to $c'_{1,\eta}$. To see this note that we have changed the colors of only edges $pz_1$ and $pz_2$ to get $c'_{1,\eta}$ from $c_{1,\eta}$. Note that by this exchange the $(1,\alpha,xy_1)$ critical path cannot be extended since $p,z_2 \notin \{x,y_1\}$ (by $Lemma$ \ref{lem:lem15}). Now if the $(1,\alpha,xy_1)$ critical path gets altered it means that this critical path contained the edge $pz_1$ (recall that $c_{1,\eta}(p,z_1)=1$) and hence $c_{1,\eta}(z_1,z'_1)=\alpha$. But we know that $c_{1,\eta}(z_1,z'_1)=\eta$, a contradiction. Thus we have,
\begin{eqnarray}
\label{eqn:eqn5}
\mbox{\emph{With respect to the partial coloring $c'_{1,\eta}$, there exists a $(1,\alpha,xy_1)$ critical path, for $\alpha \notin F_x(c'_{1,\eta})$ and $\alpha \neq \eta$.}}
\end{eqnarray}

Now color the edge $xy_k$ with color $\eta$ to get a coloring $d_1$ of $G_1$. If $d_1$ is valid we are done and $c'_1=d_1$. If it is not valid, then there has to be a bichromatic cycle containing the color $\eta$. Note that the coloring $d_1$ and $c_1$ differ only due to the exchange of colors of edges $pz_1$ and $pz_2$.  Thus it has contain one of the edges $pz_1$ or $pz_2$. Therefore it has to be either a $(\eta,1)$ or $(\eta,2)$ bichromatic cycle since $d_1(p,z_1)=2$, $d_1(p,z_2)=1$. This also means that the bichromatic cycle has to contain the vertex $q$, since $d(p,q)=\eta$. Thus the bichromatic cycle has to be a $(\eta,1)$ bichromatic cycle since $2 \notin F_q(d_1)$. This means that $d_1(y_k,y'_k)=1$. Now recolor the edge $xy_k$ with color $\alpha$ to get a coloring $d'_1$ of $G_1$. If $d'_1$ is valid we are done and $c'_1=d'_1$. If it is not valid then there has to be a $(\alpha,1)$ bichromatic cycle containing the vertex $x$, implying that there existed a $(1,\alpha,xy_k)$ critical path with respect to the coloring $d_1$ and hence with respect to the coloring $c'_{1,\eta}$. But in view of $(\ref{eqn:eqn5})$, there already exists a $(1,\alpha,xy_1)$ critical path and by $Fact$ \ref{fact:fact1}, $(1,\alpha,xy_k)$ critical path is not possible, a contradiction. Thus the coloring $d'_1$ is valid and let $c'_1=d'_1$.

Thus there exists a valid coloring $c'_1$ of $G_1$ such that the coloring $c'_{1,\eta}$ of $G_{1,\eta}$ is derivable from $c'_1$.
\end{proof}

\noindent Now in view of $Claim$ \ref{clm:clm6} and $Claim$ \ref{clm:clm7} there does not exists any $(1,\eta,xy_1)$ critical path with respect to the coloring $c'_{1,\eta}$ of $G_{1,\eta}$, a contradiction to $Critical\ Path\ Property$ (i.e., $Lemma$ \ref{lem:lem5} or $Lemma$ \ref{lem:lem7}).\newline

\noindent We conclude that $c_{1,\eta}(p,p') \in \{1,2\}$.

\renewcommand{\qedsymbol}{\bqed}
\end{proof}

\begin{asm}
\label{asm:asm7}
In view of $Lemma$ \ref{lem:lem17}, $Lemma$ \ref{lem:lem19} and $Lemma$ \ref{lem:lem20}, let $z_1$ be the vertex such that $\{c_{1,\eta}(p,z_1)\}= \{1,2\}-\{c_{1,\eta}(p,p')\}$. It follows that $\{c_{j,\eta}(p,z_1)\}=\{1,2\}-\{c_{j,\eta}(p,p')\}$ and $\{e_1,e_2\} =\{pp',pz_1\}$.
\end{asm}

\begin{obs}
\label{obs:obs4}
\begin{enumerate}
\item[(a)] If $c_{1,\eta}(p,p')=c_{j,\eta}(p,p')=2$, we have by Assumption \ref{asm:asm7} that $c_{1,\eta}(p,z_1)=c_{j,\eta}(p,z_1)=1$. Thus with respect to the partial coloring $c_{1,\eta}$, there exists a $(1,\eta,xy_1)$ critical path of length at least five which contains the vertex $z_1$. It follows that $c_{1,\eta}(z_1,z'_1)=\eta$ since $z_1z'_1$ is just the fourth edge of this $(1,\eta,xy_1)$ critical path.
\item[(b)] If $c_{1,\eta}(p,p')=c_{j,\eta}(p,p')=1$, we have by Assumption \ref{asm:asm7} that $c_{1,\eta}(p,z_1)=c_{j,\eta}(p,z_1)=2$. Thus with respect to the partial coloring $c_{j,\eta}$, there exists a $(2,\eta,xy_j)$ critical path of length at least five which contains the vertex $z_1$. It follows that $c_{j,\eta}(z_1,z'_1)=\eta$ since $z_1z'_1$ is just the fourth edge of this $(2,\eta,xy_j)$ critical path.
\end{enumerate}
\end{obs}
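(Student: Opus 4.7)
The plan is to verify each part by explicitly tracing the critical path guaranteed by the Critical Path Property (Lemma~\ref{lem:lem5} or Lemma~\ref{lem:lem7}) and invoking Lemma~\ref{lem:lem10} to know it is long enough to have a well-defined fourth edge. The two parts are symmetric: part $(a)$ works inside the coloring $c_{1,\eta}$ of $G_{1,\eta}$ using the reference data of Assumption~\ref{asm:asm3}, while part $(b)$ works inside $c_{j,\eta}$ of $G_{j,\eta}$ using Assumption~\ref{asm:asm6} together with Lemma~\ref{lem:lem16} and Lemma~\ref{lem:lem19} (which tell us that the colors at $q$ and $p$ are the same under $c_{j,\eta}$ and $c_{1,\eta}$).

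For part $(a)$, the hypothesis $c_{1,\eta}(p,p')=2$ combined with Assumption~\ref{asm:asm7} forces $c_{1,\eta}(p,z_1)=1$. Since $\mu=c_1(y_1,y'_1)=1$ by Assumption~\ref{asm:asm3}, the Critical Path Property produces a $(1,\eta,xy_1)$ critical path $P$, and Lemma~\ref{lem:lem10} guarantees that $P$ has length at least five. I would then read off the first four edges of $P$ starting at $x$: the unique edge at $x$ colored $1$ is $xq$ by Assumption~\ref{asm:asm3}; from $q$ the unique edge colored $\eta$ is $qp$ (since $c_1(q,p)=\eta$ and any proper coloring has at most one edge of a given color at each vertex); from $p$ the unique edge colored $1$ distinct from $qp$ is $pz_1$ by Assumption~\ref{asm:asm7}; and at $z_1$, which has degree $2$ in $G$, the only remaining edge is $z_1z'_1$, which must carry the alternating color $\eta$. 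Because $P$ has length at least five, $z_1$ is an internal vertex of $P$, so $z_1z'_1$ is indeed the fourth edge and hence $c_{1,\eta}(z_1,z'_1)=\eta$.

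For part $(b)$, the same trace is carried out in $c_{j,\eta}$. The hypothesis $c_{1,\eta}(p,p')=1$ combined with Assumption~\ref{asm:asm7} and Lemma~\ref{lem:lem19} gives $c_{j,\eta}(p,z_1)=2$. By Assumption~\ref{asm:asm6}, $c_j(y_j,y'_j)=2$ and $c_j(x,q)=2$, so the Critical Path Property now yields a $(2,\eta,xy_j)$ critical path, which by Lemma~\ref{lem:lem10} again has length at least five. Using Lemma~\ref{lem:lem16} to identify $qp$ as the second edge (colored $\eta$) and Assumption~\ref{asm:asm7} to identify $pz_1$ as the third edge (colored $2$), the degree-$2$ condition at $z_1$ forces $z_1z'_1$ to be the fourth edge, with color $\eta$.

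The only step that needs to be checked carefully, rather than a real obstacle, is the uniqueness of the ``next edge'' at each vertex visited by the critical path. At $x$, $q$, and $p$ this is immediate from Assumptions~\ref{asm:asm3}, \ref{asm:asm6}, \ref{asm:asm7} together with Lemmas~\ref{lem:lem16} and \ref{lem:lem19}, since in each case exactly one incident edge bears the required color; at $z_1$ it follows from $z_1 \in W_0$, so $\deg_G(z_1)=2$ leaves precisely one candidate for the continuation of $P$. Once these uniqueness checks are in place, the conclusion of each part of the observation is just what one reads off the fourth edge of the traced critical path.
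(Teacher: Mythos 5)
Your proposal is correct and matches the paper's own (implicit) justification: the paper treats this as an immediate observation obtained by combining the Critical Path Property with Lemma \ref{lem:lem10} (length at least five), Assumption \ref{asm:asm7} (which pins down $c(p,z_1)$), and the fact that $z_1\in W_0$ has degree $2$, exactly as you trace the path $x,q,p,z_1,z'_1$ edge by edge. Your explicit uniqueness checks at $x$, $q$, $p$ (via Assumptions \ref{asm:asm3}, \ref{asm:asm6}, \ref{asm:asm7} and Lemmas \ref{lem:lem16}, \ref{lem:lem19}) are just a spelled-out version of what the paper leaves to the reader, so there is no substantive difference in approach.
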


\noindent \textbf{Local Recolorings:} If a partial coloring $h$ of $G$ is obtained from a partial coloring $c$ of $G$ by recoloring only certain edges incident on the vertices belonging to $N_{G}(p)-\{p',q\} = \{z_1,z_2,\ldots, z_k\}$ and also possibly the edge $pp'$, then $h$ is said to be obtained from $c$ by local recolorings.

The concept of local recolorings turns out to be crucial for the rest of the proof. The following lemma provides the main tool in this respect.

\begin{lem}
\label{lem:lem21}
\begin{enumerate}
\item[(a)] Let $c_{1,\eta}(p,p')=c_{j,\eta}(p,p')=2$. Also let $h_{1,\eta}$ be any valid coloring obtained from $c_{1,\eta}$ by recoloring only certain edges incident on the vertices belonging to $N_{G}(p)-\{p',q\} = \{z_1,z_2,\ldots, z_k\}$ and also possibly the edge $pp'$ (i.e., by only local recolorings). Then there exists a valid coloring $h_1$ of $G_1$ such that the valid coloring $h_{1,\eta}$ of $G_{1,\eta}$ is derivable from $h_1$.
\item[(b)] Let $c_{1,\eta}(p,p')=c_{j,\eta}(p,p')=1$. Also let $f_{j,\eta}$ be any valid coloring obtained from $c_{j,\eta}$ by recoloring only certain edges incident on the vertices belonging to $N_{G}(p)-\{p',q\} = \{z_1,z_2,\ldots, z_k\}$ and also possibly the edge $pp'$ (i.e., by only local recolorings). Then there exists a valid coloring $f_j$ of $G_j$ such that the valid coloring $f_{j,\eta}$ of $G_{j,\eta}$ is derivable from $f_j$.
\end{enumerate}
\end{lem}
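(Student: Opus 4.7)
Parts (a) and (b) are symmetric under the correspondence between $c_1$ of $G_1$ and $c_j$ of $G_j$ noted in the remark after Assumption 6 (swapping the roles of colors $1$ and $2$, and of $y_1$ and $y_j$); I sketch the proof of part (a), and (b) will follow by the same argument under this symmetry. If $\eta \notin F'_x(c_1)$, then $G_{1,\eta} = G_1$ by definition and we simply take $h_1 := h_{1,\eta}$. So assume $\eta \in F'_x(c_1)$, and let $y_k \in N'_G(x)$ be the unique vertex with $c_1(x,y_k) = \eta$; necessarily $k \ne 1$, and $G_{1,\eta} = G_1 - xy_k$. The task reduces to assigning a color to $xy_k$ so that the extended coloring of $G_1$ is acyclic.

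The plan is to mirror the structure of Claim \ref{clm:clm7} in the proof of Lemma \ref{lem:lem20}. First I attempt to color $xy_k$ with $\eta$ itself, obtaining a proper coloring $d_1$. If $d_1$ is valid, we are done. Otherwise $d_1$ contains a bichromatic cycle through $xy_k$; since $y_k \in W_0$ has neighbors only $x$ and $y'_k$, the other edge at $y_k$ on this cycle is $y_k y'_k$. By Lemma \ref{lem:lem15}, $y_k \notin \{p,p',z_1,\ldots,z_k,z'_1,\ldots,z'_k\}$, so the edge $y_ky'_k$ is not touched by any local recoloring, and thus $h_{1,\eta}(y_k,y'_k) = c_1(y_k,y'_k) = \beta$. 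By Lemma \ref{lem:lem2}, $\beta \in F''_x(c_1) = \{1,2\}$. Hence the cycle is of type $(\eta,\beta)$ with $\beta \in \{1,2\}$.

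For the second attempt, I pick $\alpha \in C - F_x(c_1)$ (there are at least two such choices by Assumption \ref{asm:asm21}) and recolor $xy_k$ with $\alpha$ to obtain $d'_1$. Since $\alpha \notin F_x(c_1)$, any new bichromatic cycle in $d'_1$ must be of type $(\beta,\alpha)$, so invalidity of $\alpha$ is equivalent (by Fact \ref{fact:fact2}) to the existence of a $(\beta,\alpha,xy_k)$ critical path in $h_{1,\eta}$. To rule this out, I invoke Lemma \ref{lem:lem5} (a $(1,\alpha,xy_1)$ critical path in $c_{1,\eta}$) when $\beta = 1$, and Lemma \ref{lem:lem6} (a $(2,\alpha,x,y'_1)$ maximal bichromatic path in $c_{1,\eta}$) when $\beta = 2$. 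Assuming these persist in $h_{1,\eta}$, Fact \ref{fact:fact1} together with $y_1,y'_1 \ne y_k$ (which uses $\deg_G(y'_1) \ge 3$ from Assumption \ref{asm:asm21} and the definition of $y_k \in N'_G(x)$) forbids the $(\beta,\alpha,xy_k)$ critical path, so $d'_1$ is valid.

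The main obstacle is verifying persistence of the $(1,\alpha,xy_1)$ critical paths and $(2,\alpha,x,y'_1)$ maximal bichromatic paths under the local recolorings. Since the recolored edges lie among $\{pz_i,\,z_iz'_i,\,pp'\}$ and these vertices are disjoint from $\{x,y_1,y'_1\}$ (by Lemmas \ref{lem:lem11} and \ref{lem:lem15}), these paths can only be broken if they traverse $p$ or some $z_i$. For a given $\alpha \notin F_x(c_1)$, this imposes a concrete structural condition on the edge-coloring at $p$ (using Observation \ref{obs:obs4}(a) to fix the colors on $pz_1$ and $z_1z'_1$ in $c_{1,\eta}$). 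By pigeonholing over the at least two available choices of $\alpha$, one verifies that some $\alpha$ avoids the recolored region entirely (or else the single $\alpha$-edge at $p$ is not among the recolored ones), yielding a valid color for $xy_k$ and thus the desired $h_1$.
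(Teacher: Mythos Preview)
Your outline follows the paper's strategy up to the point where you first try color $\eta$ on $xy_k$, but there are two genuine gaps after that.

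\textbf{First}, the appeal to Lemma~\ref{lem:lem2} to conclude $\beta = c_1(y_k,y'_k) \in \{1,2\}$ is incorrect. Lemma~\ref{lem:lem2} constrains $c_i(y_i,y'_i)$ only for the vertex $y_i$ whose edge to $x$ has been removed; here $i=1$, so it tells you $c_1(y_1,y'_1)\in\{1,2\}$, not $c_1(y_k,y'_k)$. All you know from the $(\eta,\beta)$ bichromatic cycle is that $\beta\in F_x(c_1)\setminus\{\eta\}$, so $\beta$ may well lie in $F'_x(c_1)$. The paper handles this separately: if $\beta\in F'_x$, let $y_t$ be the vertex with $d'(x,y_t)=\beta$; then the cycle forces $d'(y_t,y'_t)=\eta$, and now recoloring $xy_k$ with any $\alpha\in C-F_x(c_1)$ cannot create an $(\alpha,\beta)$ cycle because such a cycle would force $d'(y_t,y'_t)=\alpha\ne\eta$.

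\textbf{Second}, your plan for $\beta=2$ via the $(2,\alpha,x,y'_1)$ maximal bichromatic path of Lemma~\ref{lem:lem6} does not survive local recolorings in general. The edge $pp'$ is the unique $2$-colored edge at $p$ in $c_{1,\eta}$, and it is explicitly among the locally recolorable edges; if the $(2,\alpha)$ path passes through $p$ it must use $pp'$, so recoloring $pp'$ can destroy the path for \emph{every} $\alpha$ simultaneously --- the pigeonhole over two choices of $\alpha$ does not help. The paper sidesteps this entirely: when $\theta=d(y_k,y'_k)=2$, it first recolors $y_ky'_k$ with some $\omega\notin\{2,\eta\}$ (possible since $|F_{y'_k}|\le\Delta$) so that the new $\beta=d'(y_k,y'_k)\ne 2$. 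This reduces to either $\beta=1$ (where Claim~\ref{clm:clm8} shows the $(1,\alpha,xy_1)$ critical path cannot pass through $p$, since that would force $c_{1,\eta}(z_1,z'_1)=\alpha\ne\eta$, contradicting Observation~\ref{obs:obs4}(a)), or $\beta\in F'_x$ (handled as above). Only the $\beta=1$ case needs a persistence argument, and that one goes through cleanly because the single $1$-colored edge $pz_1$ at $p$ has $c_{1,\eta}(z_1,z'_1)=\eta$, blocking any $(1,\gamma)$ path with $\gamma\ne\eta$ from entering the recolored region.
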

\begin{proof}
\begin{enumerate}
\item[(a)]

Recall that $\eta \neq 1,2$. If $\eta \notin F_x(c_1)$, then $c_{1,\eta}=c_1$. In this case we take $h_1=c_1$. Otherwise if $\eta \in F'_x(c_1)$, let $xy_k$ be the edge in $G_1$ such that $c_1(x,y_k)=\eta$. Note that $k \neq 1$. It is enough to show that we can extend the valid coloring $h_{1,\eta}$ of $G_{1,\eta}$ to a valid coloring $h_1$ of $G_1$ by assigning an appropriate color to the edge $xy_k$ (Reader may note that neither $pp'$ nor any edge incident on the vertices in $\{z_1,z_2,\ldots, z_k\}$ can be the edge $xy_k$ since $x \notin \{p,p',z_1,z_2,\ldots, z_k,z'_1,z'_2,\ldots, z'_k\}$ due to $Lemma$ \ref{lem:lem15}). Now assign color $\eta$ to the edge $xy_k$ to get a coloring $d$. If the coloring $d$ is valid we are done and we have $h_1=d$. If it is not valid then there has to be a bichromatic cycle created in $G_1$ with respect to the coloring $d$. The cycle has to be a $(\eta,\theta)$ bichromatic cycle, where $d(y_k,y'_k)=\theta$. Moreover we can infer that $\theta \in F_{x}(d)$. If $\theta \neq d(p,p')=2$, then let $d'=d$. Otherwise we have $\theta = d(p,p')=2$. Now there exists a color $\omega \neq 2,\eta$ that is a candidate for the edge $y_ky'_k$. Recolor the edge $y_ky'_k$ using color $\omega$ to get a coloring $d'$ of $G_1$. Now if $d'$ is a valid coloring, then we are done and we have $h_1=d'$. If it is not valid, then $d'(y_k,y'_k)\neq 2$. Let $d'(y_k,y'_k)=\beta \neq 2$. Moreover with respect to the coloring $d'$ there should be a $(\eta,\beta)$ bichromatic cycle. Also let $\alpha\ (\neq \eta)\ \in C-F_x(c_1)=C-F_x(d')$. Now if,

\begin{enumerate}
\item[(1)] $\beta = 1$.

\begin{clm}
\label{clm:clm8}
None of the $(1,\gamma,xy_1)$ critical paths, where $\gamma\ (\neq \eta)\ \in C- F_x(c_1)$ are altered in the process of getting the coloring $h_{1,\eta}$ from $c_{1,\eta}$.
\end{clm}
\begin{proof}
Recall that only the edges incident on vertices $z_i$, where $z_i \in N_{G}(p)-\{p',q\}$ and edge $pp'$ are possibly recolored to get the coloring $h_{1,\eta}$ of $G_{1,\eta}$ from $c_{1,\eta}$. Note that by these recolorings the $(1,\gamma,xy_1)$ critical path cannot be extended since $x,y_1 \notin \{p,p',z_1,z_2,\ldots, z_k,z'_1,z'_2,\ldots, z'_k\}$ due to $Lemma$ \ref{lem:lem15}. Now if any $(1,\gamma,xy_1)$ critical paths are altered then they have to contain the above mentioned edges. Note that none of the vertices in $\{z_1,z_2,\ldots, z_k\}$ or vertex $p'$ can be the end vertices $x$ or $y_1$ and hence any critical path containing the vertex $z_i$ or $p'$ should also contain the vertex $p$ since $degree_{G_{1,\eta}}(z_i)=2$. We can infer that with respect to the coloring $c_{1,\eta}$, the $(1,\gamma,xy_1)$ critical path passes through the vertex $p$. It follows that this critical path has to contain the edge $pz_1$ since $c_{1,\eta}(p,z_1)=1$ (from part $(a)$ of $Observation$ \ref{obs:obs4}). Now since $z_1 \in W_0$ (i.e., $degree_{G_{1,\eta}}(z_1)=2$), this implies that $c_{1,\eta}(z_1,z'_1)=\gamma$, a contradiciton since from part $(a)$ of $Observation$ \ref{obs:obs4}, we know that $c_{1,\eta}(z_1,z'_1)=\eta$. Thus there cannot be any $(1,\gamma,xy_1)$ critical path containing the edges incident on vertices $z_i$, where $z_i \in N_{G}(p)-\{p',q\}$ and edge $pp'$. Thus none of the $(1,\gamma,xy_1)$ critical paths, where $\gamma \in C- F_{x}(c_1)$, $\gamma \neq \eta$ are altered.
\end{proof}

Since $d'$ is not valid there has to be a $(\eta,1)$ bichromatic cycle that passes through the vertex $x$. Now recolor the edge $xy_k$ with color $\alpha$ to get a coloring $d''$. Now if still there is a bichromatic cycle, then it should contain the edge $xy_k$ and hence the edge $y_ky'_k$. Therefore it is a $(\alpha,1)$ bichromtic cycle. This implies by $Fact$ \ref{fact:fact2} that there existed a $(1,\alpha,xy_k)$ critical path with respect to the coloring $d'$ and hence with respect to the coloring $h_{1,\eta}$. But in view of $Claim$ \ref{clm:clm8}, there exists a $(1,\alpha,xy_1)$ critical path with respect to the coloring $h_{1,\eta}$, a contradiciton in view of $Fact$ \ref{fact:fact1}. Thus the coloring $d''$ is valid.

\item[(2)] $\beta \neq 1$. This implies that $\beta\ (\neq \eta)\ \in F'_{x}(d')$. Let $y_t \in N'_{G}(x)$ be such that $d'(x,y_t)=\beta$. Thus $d'(y_t,y'_t)=\eta$. Now recolor the edge $xy_k$ with color $\alpha \in C-F_{x}(d')$ to get a coloring $d''$. Note that $\alpha \neq \eta$ since $\eta \notin C-F_{x}(d')$. Now if still there is a bichromatic cycle, then it should contain the edge $xy_k$ and hence the edge $y_ky'_k$. Therefore it is a $(\alpha,\beta)$ bichromtic cycle. Thus the bichromatic cycle should contain the edge $xy_t$. Since $degree_{G_1}(y_t)=2$, the bichromatic cycle should contain the edge $y_ty'_t$. But by our assumption, $d''(y_t,y'_t)=d'(y_t,y'_t)=\eta \neq \alpha$, a contradiction. Thus the coloring $d''$ is valid.

\end{enumerate}

Now let $h_1=d''$. Thus we get a valid coloring of $G_1$ from $h_{1,\eta}$.

\item[(b)] The proof of this is similar to that of part $(a)$ with $G_j$, $c_j$, $y_j$ taking the roles of $G_1$, $c_1$, $y_1$ respectively and the colors $1$ and $2$ exchanging their roles.

\end{enumerate}

\renewcommand{\qedsymbol}{\bqed}
\end{proof}

\begin{lem}
\label{lem:lem22}
\begin{enumerate}
\item[(a)] If $c_{1,\eta}(p,p') =c_{j,\eta}(p,p')=2$, then with respect to the coloring $c_{1,\eta}$, $2 \notin S_{z_1z'_1}$. (Recall that by Assumption \ref{asm:asm7}, $\{c_{1,\eta}(p,z_1)\}= \{c_{j,\eta}(p,z_1)\}= \{1,2\}-\{c_{1,\eta}(p,p')\}=\{1\}$.)
\item[(b)] If $c_{j,\eta}(p,p') =c_{1,\eta}(p,p')=1$, then with respect to the coloring $c_{j,\eta}$, $1 \notin S_{z_1z'_1}$. (Recall that by Assumption \ref{asm:asm7}, $\{c_{j,\eta}(p,z_1)\}= \{c_{1,\eta}(p,z_1)\}= \{1,2\}-\{c_{j,\eta}(p,p')\}=\{2\}$.)
\end{enumerate}
\end{lem}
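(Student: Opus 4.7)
I argue by contradiction in both parts; the two statements are symmetric — colors $1$ and $2$ trade roles, $c_1/G_1$ is replaced by $c_j/G_j$, and Lemma~\ref{lem:lem21}(b) replaces Lemma~\ref{lem:lem21}(a). So I will focus on part (a).

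Suppose for contradiction that $2 \in S_{z_1z'_1}(c_{1,\eta})$, i.e., some edge at $z'_1$ other than $z_1z'_1$ is colored $2$. The plan is to produce, using only local recolorings in the sense of the paragraph preceding Lemma~\ref{lem:lem21}, a valid partial coloring $c'_{1,\eta}$ of $G_{1,\eta}$ in which no $(1,\eta,xy_1)$-critical path exists. By Lemma~\ref{lem:lem21}(a), $c'_{1,\eta}$ then lifts to a valid coloring $c'_1$ of $G_1$, but the Critical Path Property (Lemma~\ref{lem:lem5}/Lemma~\ref{lem:lem7}) applied to $c'_1$ forces a $(1,\eta,xy_1)$-critical path in $c'_{1,\eta}$, giving the required contradiction.

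The natural move is to exchange colors on the edges $pz_1$ and $pp'$, whose current colors are $1$ and $2$ by Assumption~\ref{asm:asm7} together with the part~(a) hypothesis. Provided the exchange is proper and valid, Lemma~\ref{lem:lem1} — applied with $u=p$, $i=z_1$, $j=p'$, $ab=xy_1$, $\lambda=1$, $\xi=\eta$, the disjointness $\{z_1,p'\}\cap\{x,y_1\}=\emptyset$ being furnished by Lemma~\ref{lem:lem15} — kills the $(1,\eta,xy_1)$-critical path, since that path passes through $p$ via the edge $pz_1$. Properness via Fact~\ref{fact:fact3} reduces to checking $1 \notin S_{pp'}$ (the condition $2 \notin S_{pz_1}=\{\eta\}$ is automatic). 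Validity amounts to scanning bichromatic cycles through the two changed edges: a $(2,\alpha)$-cycle through $pz_1$ would have to continue at $z_1$ via $z_1z'_1$ (color $\eta$), so $\alpha=\eta$, but such a cycle would need a $2$-colored edge at $q$, impossible since $2\notin F_q(c_{1,\eta})$ by equation~(\ref{eqn:eqn1}); a $(1,\alpha)$-cycle through $pp'$ with $\alpha=\eta$ demands an $\eta$-edge at $x$ to close, which is impossible because $\eta\notin F_x(c_{1,\eta})$; and with $\alpha=2$ it demands a $1$-edge at $z_1$, which is absent since $z_1z'_1$ is colored $\eta$.

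The main obstacle is the remaining case — a $(1,\alpha)$-cycle through $pp'$ closing via some $pz_i$ with $i\ge 2$, $\alpha=c(pz_i)$, and $c(z_iz'_i)=1$ — together with the failure mode $1\in S_{pp'}$ that wrecks properness of the naive swap. Resolving it is where the hypothesis $2\in S_{z_1z'_1}$ is put to work: the $(\eta,2)$-maximal bichromatic path from $q$ guaranteed by Property B (Lemma~\ref{lem:lem14}), together with Observation~\ref{obs:obs4}(a) (which gives $c_{1,\eta}(z_1,z'_1)=\eta$) and equation~(\ref{eqn:eqn1}), should pin down enough local structure around $p$, $z_1$, $z'_1$ to preclude these obstructing cycles; if the simple swap still fails, we augment it by additionally recoloring $z_1z'_1$ with a suitable color, which simultaneously restores properness, absorbs the conflict imposed by $2\in S_{z_1z'_1}$, and leaves the critical path destroyed. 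The resulting $c'_{1,\eta}$ is then handed to Lemma~\ref{lem:lem21}(a) to close the argument, and part~(b) follows by the symmetric construction.
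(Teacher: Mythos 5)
Your overall contradiction framework (do only local recolorings that destroy the $(1,\eta,xy_1)$ critical path, then lift via Lemma~\ref{lem:lem21}(a) and contradict the Critical Path Property) is the same as the paper's, but the recoloring you propose has a genuine gap. Your main move is the color exchange on $pz_1$ and $pp'$, and by Fact~\ref{fact:fact3} its properness hinges on $1 \notin S_{pp'}$ — a condition you cannot establish from anything available at this stage, and which in fact fails: the paper's subsequent Lemma~\ref{lem:lem23} shows that in exactly this configuration every color missing at $p$ (in particular color $1$) lies in $S_{pp'}$, so the ``naive swap'' is never proper. Your fallback, ``additionally recolor $z_1z'_1$ with a suitable color,'' cannot repair this, because the properness violation sits at $p'$, a vertex not incident to $z_1z'_1$; nor do you say which color is used or verify anything about it. Similarly, the case you yourself flag as the main obstacle — a $(1,\alpha)$ bichromatic cycle through $pp'$ (now colored $1$) and some $pz_i$, $i\ge 2$, with $c(z_i,z'_i)=1$ — is not excluded by Property~B, Observation~\ref{obs:obs4}(a) or equation~(\ref{eqn:eqn1}); ``should pin down enough local structure'' is not an argument, and no concrete mechanism is given.

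The paper avoids both problems by never touching $pp'$. It uses the contradiction hypothesis $2 \in S_{z_1z'_1}$ only to conclude that $1,2,\eta$ are all present at $z'_1$, hence some $\theta \notin \{1,2,\eta\}$ is missing there; it recolors $z_1z'_1$ with $\theta$, and if this creates a $(1,\theta)$ cycle through $p$ and some $z_i$ (with $c(p,z_i)=\theta$, $c(z_i,z'_i)=1$), it further recolors $pz_1$ with a color $\mu \notin \{1,2,\eta,\theta\}$ missing at $p$, checking directly that no new bichromatic cycle can arise. Since only $z_1z'_1$ and $pz_1$ are recolored, and each now carries a color outside $\{1,\eta\}$, the $(1,\eta)$ maximal bichromatic path from $x$ terminates at $p$ or $z_1$, so Fact~\ref{fact:fact1} rules out the $(1,\eta,xy_1)$ critical path demanded by the Critical Path Property after lifting via Lemma~\ref{lem:lem21}(a). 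If you want to salvage your write-up, you essentially have to replace the $pz_1$/$pp'$ exchange by this $\theta$/$\mu$ recoloring and carry out the validity checks explicitly.
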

\begin{proof}
\begin{enumerate}
\item[(a)] Suppose not. That is $2 \in S_{z_1z'_1}$. Note that by part $(a)$ of $Observation$ \ref{obs:obs4}, we have $c_{j,\eta}(p,z_1)=1$ and $c_{j,\eta}(z_1,z'_1)=\eta$. Therefore there exists some $\theta \notin \{1,2,\eta\}$ missing in $S_{z_1z'_1}$. Now recolor edge $z_1z'_1$ with color $\theta$ to get a coloring $c'_{1,\eta}$. If the coloring $c'_{1,\eta}$ is valid, then let $c''_{1,\eta} =c'_{1,\eta}$. Otherwise a bichromatic cycle gets formed by the recoloring. Since $c'_{1,\eta}(p,z_1)=1$, it has to be a $(1,\theta)$ bichromatic cycle and it passes through the vertex $p$. Thus there exists $z_i \in N_{G}(p)-\{q,p'\}$ such that $c'_{1,\eta}(p,z_i)=\theta$ and $c'_{1,\eta}(z_i,z'_i)=1$.

Now there exists a color $\mu \notin \{1,\theta,2,\eta\}$ missing at $p$. Recolor the edge $pz_1$ with color $\mu$ to get a coloring $c''_{1,\eta}$. This clearly breaks the $(1,\theta)$ bichromatic cycle that existed with respect to $c'_{1,\eta}$. But if a new bichromatic cycle gets formed with respect to $c''_{1,\eta}$, then it has to contain vertex $z_1$ and therefore the edge $z_1z'_1$, implying that it has to be a $(\mu,\theta)$ bichromatic cycle since $c''_{1,\eta}(z_1,z'_1)= \theta$. This cycle passes through the vertex $p$ and hence passes through the vertex $z_i$ since $c''_{1,\eta}(p,z_i)= \theta$, implying that $c''_{1,\eta}(z_i,z'_i)=\mu$, a contradiction since $c''_{1,\eta}(z_i,z'_i)=1$. Thus the coloring $c''_{1,\eta}$ is valid.

Note that we have possibly changed the colors of the edges $pz_1$ and $z_1z'_1$ to get $c''_{1,\eta}$ from $c_{1,\eta}$ (i.e., only local recolorings are done). Therefore by part $(a)$ of $Lemma$ \ref{lem:lem21} we infer that there exists a coloring $c''_1$ of $G_1$ such that $c''_{1,\eta}$ is derivable from $c''_1$. \emph{It follows from $Critical\ Path\ Property$ (i.e., $Lemma$ \ref{lem:lem5} or $Lemma$ \ref{lem:lem7}) that there exists a $(1,\eta,xy_1)$ critical path with respect to the coloring $c''_{1,\eta}$}. On the other hand recall that with respect to $c_{1,\eta}$ there existed a $(1,\eta,xy_1)$ critical path passing through $pz_1$ and $z_1z'_1$ (by part $(a)$ of $Observation$ \ref{obs:obs4}). But while getting $c''_{1,\eta}$ from $c_{1,\eta}$ we have indeed changed the color of at least one of the edges $pz_1$ or $z_1z'_1$ using a color other than $1$ and $\eta$. It follows that the $(1,\eta)$ maximal bichromatic path which contains the vertex $x$ ends at either vertex $p$ or $z_1$. Noting that $p,z_1 \neq y_1$, we infer by $Fact$ \ref{fact:fact1} that there cannot be a $(1,\eta,xy_1)$ critical path with respect to the coloring $c''_{1,\eta}$, a contradiciton.

\item[(b)] The proof of this is similar to that of part $(a)$ with $G_{j,\eta}$, $c_{j,\eta}$, $y_j$ taking the roles of $G_{1,\eta}$, $c_{1,\eta}$ and $y_1$ respectively and the colors $1$ and $2$ exchanging their roles.
\end{enumerate}

\renewcommand{\qedsymbol}{\bqed}
\end{proof}

\subsection{Getting a valid coloring that contradicts the Critical Path Property either from $c_1$ or from $c_j$}

In this section we will get the final contradiction in the following way: If $c_{1,\eta}(p,p')=c_{j,\eta}(p,p')=1$, then we will show that we can get a coloring $c'_j$ from $c_j$ that contradicts the Critical Path Property. Otherwise if $c_{1,\eta}(p,p')=c_{j,\eta}(p,p')=2$, then we will show that we can get a coloring $c'_1$ from $c_1$ that contradicts the $Critical\ Path\ Property$.

The two colorings $c_1$ and $c_j$ are very similar and hence we will only describe the way we get $c'_1$ from $c_1$. The same agruments can be imitated easily for $c_j$ by keeping the following correspondences in mind.
\begin{enumerate}
\item Vertex $y_1$ has same role as vertex $y_j$.
\item Colors $1$ and $2$ exchange their roles.
\item $(1,\gamma,xy_1)$ critical path has the same role as $(2,\gamma,xy_j)$ critical path, for $\gamma \in C'$. The $Critical\ Path\ Property$ of $c_1$ corresponds to that of $c_j$ (See Remarks after $Assumption$ \ref{asm:asm3} and $Assumption$ \ref{asm:asm6}).
\item Part $(a)$ of $Lemma$ \ref{lem:lem21} and $Lemma$ \ref{lem:lem22} applies to coloring $c_1$ while part $(b)$ applies to coloring $c_j$ in a corresponding way.
\item $Lemma$ \ref{lem:lem14} has the same role as $Lemma$ \ref{lem:lem13}.
\end{enumerate}

~~~~

We make the following assumption:

\begin{asm}
\label{asm:asm8}
Let $c_{1,\eta}(p,p')=c_{j,\eta}(p,p')=2$.
\end{asm}

\begin{obs}
\label{obs:obs5}
In view of $Assumption$ \ref{asm:asm8}, from $Observation$ \ref{obs:obs4} there exists a $(1,\eta,xy_1)$ critical path which contains the vertex $z_1$ with respect to the partial coloring $c_{1,\eta}$. Moreover this path is of length at least five. It follows that $c_{1,\eta}(p,z_1)=1$ and $c_{1,\eta}(z_1,z'_1)=\eta$. The first five vertices of the path are $x$, $q$, $p$, $z_1$, $z'_1$. Then clearly $z'_1 \neq y_1$ and hence is not a pendant vertex in $G_{1,\eta}$. Thus we have $S_{z_1z'_1} \neq \emptyset$ and $1 \in S_{z_1z'_1}$.
\end{obs}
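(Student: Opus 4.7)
The plan is to assemble Observation \ref{obs:obs5} directly from Assumption \ref{asm:asm8}, Assumption \ref{asm:asm7}, part (a) of Observation \ref{obs:obs4}, and a brief inspection of the initial segment of the resulting critical path.

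First, I would combine Assumption \ref{asm:asm8} (giving $c_{1,\eta}(p,p')=2$) with Assumption \ref{asm:asm7} (which forces $\{c_{1,\eta}(p,z_1)\} = \{1,2\} - \{c_{1,\eta}(p,p')\}$) to pin down $c_{1,\eta}(p,z_1)=1$. Part (a) of Observation \ref{obs:obs4} then yields a $(1,\eta,xy_1)$ critical path $P$ of length at least five that contains the vertex $z_1$, and also gives $c_{1,\eta}(z_1,z'_1)=\eta$.

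Next, I would identify the first five vertices of $P$ by walking along it from $x$. The first edge must be colored $1$; by Assumption \ref{asm:asm3} the only such edge incident on $x$ is $xq$, so $P$ enters $q$. The second edge is colored $\eta$, and since $c_1(q,p)=\eta$ by the very definition of $\eta$, it is $qp$. Using the already established $c_{1,\eta}(p,z_1)=1$ and $c_{1,\eta}(z_1,z'_1)=\eta$ in the same way, the first five vertices of $P$ are $x,q,p,z_1,z'_1$ in that order. Because $P$ has length at least five, it must extend beyond $z'_1$ via a fifth edge which is colored $1$, incident on $z'_1$, and distinct from $z_1z'_1$; this edge witnesses $1 \in S_{z_1z'_1}$ (in particular $S_{z_1z'_1}\neq \emptyset$) and shows that $z'_1$ has degree at least two in $G_{1,\eta}$. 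Since $y_1$ is a pendant vertex in $G_{1,\eta}$, this also forces $z'_1 \neq y_1$.

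The argument is essentially an unpacking of earlier results, so there is no real obstacle. The only point deserving explicit attention is that at each step of the tracing the next edge of $P$ is uniquely determined --- this follows from properness of $c_{1,\eta}$ together with the specific color assignments recorded in Assumptions \ref{asm:asm3} and \ref{asm:asm7} --- which is what makes $x,q,p,z_1,z'_1$ an actual initial segment of $P$ rather than just five vertices that happen to lie on it.
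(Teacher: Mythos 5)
Your proposal is correct and matches the paper's intent: Observation \ref{obs:obs5} is exactly the unpacking of Assumption \ref{asm:asm8}, Assumption \ref{asm:asm7} and part (a) of Observation \ref{obs:obs4}, followed by tracing the initial segment $x,q,p,z_1,z'_1$ of the $(1,\eta,xy_1)$ critical path using properness and the recorded colors ($c_{1,\eta}(x,q)=1$, $c_{1,\eta}(q,p)=\eta$, $c_{1,\eta}(p,z_1)=1$, $c_{1,\eta}(z_1,z'_1)=\eta$), and then using the length-at-least-five guarantee to produce the fifth edge, colored $1$ and incident on $z'_1$, which gives $1\in S_{z_1z'_1}$ and $z'_1\neq y_1$. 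The only cosmetic difference is the order of the last inferences (the paper deduces $z'_1\neq y_1$ first and then non-pendancy; you deduce non-pendancy from the fifth edge and then $z'_1\neq y_1$), which is immaterial.
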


\subsection*{Getting a valid coloring $d_1$ of $G_{1,\eta} - \{pz_1\}$ from $c_{1,\eta}$ by only local recolorings}

In view of $Lemma$ \ref{lem:lem22} and since $c_{1,\eta}(p,z_1)=1$, the color $2$ is a candidate for the edge $z_1z'_1$. We get a valid coloring $d_1$ of $G_{1,\eta} - \{pz_1\}$ from $c_{1,\eta}$ by removing the edge $pz_1$ and recoloring the edge $z_1z'_1$ by the color $2$. Note that $d_1$ is valid since $z_1$ is a pendant vertex in $G_{1,\eta} - \{pz_1\}$. Moreover we have broken the $(1,\eta,xy_1)$ critical path. Hence we have,
\begin{eqnarray}
\label{eqn:eqn6}
\mbox{\emph{With respect to the partial coloring $d_1$, there does not exists any $(1,\eta,xy_1)$ critical path.}}
\end{eqnarray}

\begin{lem}
\label{lem:lem23}
With respect to the partial coloring $d_1$ of $G_{1,\eta}$ , $\forall \gamma \in C-F_{p}(d_1)$, there exists a $(2,\gamma,pz_1)$ critical path. Since each of these critical paths has to contain the edge $pp'$, we can infer that $C-F_{p}(d_1) \subseteq S_{pp'}$.
\end{lem}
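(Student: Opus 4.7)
The plan is to show, via contradiction, that no candidate color $\gamma \in C - F_p(d_1)$ is valid for the would-be restored edge $pz_1$, and then to read off the claimed critical path from the resulting forced bichromatic cycle.

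First I would fix $\gamma \in C - F_p(d_1)$ and suppose for contradiction that $\gamma$ is valid for $pz_1$ in $G_{1,\eta}$ with respect to $d_1$. Then assigning $\gamma$ to $pz_1$ produces a valid coloring $d'_1$ of $G_{1,\eta}$. Crucially $d'_1$ is obtained from $c_{1,\eta}$ by only local recolorings in the sense of $Lemma$ \ref{lem:lem21}, i.e., changes on $pz_1$ and $z_1z'_1$. By $Lemma$ \ref{lem:lem21}(a), $d'_1$ extends to a valid coloring of $G_1$, so the \emph{Critical Path Property} forces a $(1,\eta,xy_1)$ critical path with respect to $d'_1$.

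The key step is to argue this critical path survives in $d_1$. In $d'_1$ the two edges incident to the degree-$2$ vertex $z_1$ carry colors $\gamma$ (on $pz_1$) and $2$ (on $z_1z'_1$). For a $(1,\eta)$-alternating path to traverse $z_1$ we would need $\{1,\eta\} \subseteq \{\gamma,2\}$, which is impossible: $\eta \neq 2$ since $\eta \in C'$; $\gamma \neq \eta$ since $\eta \in F_p(d_1)$ via edge $qp$ while $\gamma \notin F_p(d_1)$; and the remaining option $\gamma = 1$ still fails since then $\eta$ would have to equal $2$. Hence the critical path avoids $z_1$ entirely, so it uses no edge whose presence or color differs between $d_1$ and $d'_1$. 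Therefore the $(1,\eta,xy_1)$ critical path exists also in $d_1$, contradicting $(\ref{eqn:eqn6})$.

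Consequently every $\gamma \in C - F_p(d_1)$ is invalid for the restoration of $pz_1$, so $Fact$ \ref{fact:fact2} provides some $\alpha$ and a $(\alpha,\gamma,pz_1)$ critical path in $d_1$. Since the only colored edge at $z_1$ in $d_1$ is $z_1z'_1$, colored $2$, we must have $\alpha = 2$, yielding a $(2,\gamma,pz_1)$ critical path. Its first edge is incident to $p$ and colored $2$; by properness of $d_1$ the unique such edge is $pp'$, so this critical path contains $pp'$. Its second edge is then incident to $p'$ and colored $\gamma$, giving $\gamma \in F_{p'}(d_1)\setminus\{2\} = S_{pp'}(d_1)$, and hence $C - F_p(d_1) \subseteq S_{pp'}$. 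The only subtle point is the persistence argument above: one must leverage the fact that $z_1 \in W_0$ has only two incident colors $\{\gamma,2\}$ after the local modifications, which is exactly what rules out a $(1,\eta)$ alternation through $z_1$ in $d'_1$ and hands back the contradiction with $(\ref{eqn:eqn6})$.
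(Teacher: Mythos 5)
Your proof is correct and takes essentially the same route as the paper: assume a candidate $\gamma \in C-F_p(d_1)$ is valid for $pz_1$, note that the resulting coloring comes from $c_{1,\eta}$ by local recolorings so that $Lemma$ \ref{lem:lem21}(a) and the $Critical\ Path\ Property$ force a $(1,\eta,xy_1)$ critical path, and then derive a contradiction from the fact that a $(1,\eta)$ path can no longer get past $z_1$ (the paper contradicts this directly in $d'_1$ via $Fact$ \ref{fact:fact1}, while you transfer the path back to $d_1$ and contradict $(\ref{eqn:eqn6})$ --- the same obstruction). One small point to make explicit: to conclude the forced path avoids $z_1$ entirely you should also note $z_1 \neq x, y_1$ (by $Lemma$ \ref{lem:lem15}), so $z_1$ cannot be an endpoint of the critical path.
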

\begin{proof}
Suppose not. Then there exists a color $\gamma \in C-F_{p}(d_1)$ such that there is no $(2,\gamma,pz_1)$ critical path. By $Fact$ \ref{fact:fact2} color $\gamma$ is valid for the edge $pz_1$. Thus we get a valid coloring $d'_1$ of $G_{1,\eta}$ by coloring the edge $pz_1$ with color $\gamma$.

Note that we have possibly changed the colors of the edges $pz_1$ and $z_1z'_1$ to get $d'_{1}$ from $c_{1,\eta}$ (i.e., only local recolorings are done). Therefore by part $(a)$ of $Lemma$ \ref{lem:lem21} we infer that there exists a valid coloring of $G_1$ from which $d'_{1}$ can be derived. \emph{It follows from $Critical\ Path\ Property$ (i.e., $Lemma$ \ref{lem:lem5} or $Lemma$ \ref{lem:lem7}) that there exists a $(1,\eta,xy_1)$ critical path with respect to the coloring $d'_{1}$}. On the other hand recall that with respect to $c_{1,\eta}$ there existed a $(1,\eta,xy_1)$ critical path passing through $pz_1$ and $z_1z'_1$ (by $Observation$ \ref{obs:obs5}). But while getting $d'_{1}$ from $c_{1,\eta}$ we have indeed changed the color of the edges $z_1z'_1$ using the color $2 \notin \{1,\eta\}$. It follows that the $(1,\eta)$ maximal bichromatic path which contains the vertex $x$ ends at either vertex $p$ or $z_1$. Noting that $p,z_1 \notin y_1$, we infer that there cannot be a $(1,\eta,xy_1)$ critical path with respect to the coloring $d'_{1}$, a contradiciton.

\renewcommand{\qedsymbol}{\bqed}
\end{proof}

Note that with respect to $G_{1,\eta} - \{pz_1\}$, $\vert F_{p}(d_1) \vert \le \Delta -1$ and therefore $\vert C-F_{p}(d_1) \vert \ge 2$. But we know that color $1 \notin F_{p}(d_1)$. Since $\vert C-F_{p}(d_1) \vert \ge 2$, there exists a color $\mu \neq 1 \in C-F_{p}(d_1)$. Note that $\mu \neq 2$ also. The following observation is obvious in view of $Claim$ \ref{lem:lem23}:

\begin{obs}
\label{obs:obs6}
With respect to the partial coloring $d_1$ of $G_{1,\eta}$ , $1,\mu \notin F_{p}(d_1)$ and there exist $(2,1,pz_1)$ and $(2,\mu,pz_1)$ critical paths.
\end{obs}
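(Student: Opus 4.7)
The observation repackages facts that have already been assembled in the two paragraphs preceding it, so my proof would amount to a short verification that the hypothesis of Lemma~\ref{lem:lem23} applies to the two specific colors $1$ and $\mu$, followed by a direct invocation of that lemma.

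First I would confirm $1\notin F_p(d_1)$. By Observation~\ref{obs:obs5}, in $c_{1,\eta}$ the color $1$ appears on the edge $pz_1$, and since $c_{1,\eta}$ is a proper coloring no other edge incident on $p$ is colored $1$. In passing from $c_{1,\eta}$ to $d_1$ we only delete the edge $pz_1$ and recolor $z_1z'_1$ (from $\eta$ to $2$); neither operation creates a color-$1$ edge at $p$. Hence $1\notin F_p(d_1)$, as already noted in the text just before the observation. The statement $\mu\notin F_p(d_1)$ is immediate from the choice of $\mu$, which was selected from $C-F_p(d_1)$ with the extra conditions $\mu\neq 1$ and $\mu\neq 2$.

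Second, having established $\{1,\mu\}\subseteq C-F_p(d_1)$, I would apply Lemma~\ref{lem:lem23} with $\gamma=1$ to obtain the $(2,1,pz_1)$ critical path, and once more with $\gamma=\mu$ to obtain the $(2,\mu,pz_1)$ critical path. This is exactly the content of the observation.

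I do not anticipate any genuine obstacle: the statement is a bookkeeping step that records, in a form convenient for the subsequent argument, the two critical paths at $p$ guaranteed by Lemma~\ref{lem:lem23} for the distinguished colors $1$ and $\mu$. The only point worth being explicit about is that the construction of $d_1$ really does leave $1$ missing at $p$, which is immediate from the local nature of the recolorings.
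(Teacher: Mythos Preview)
Your proposal is correct and matches the paper's own treatment: the paper states that the observation is ``obvious in view of'' Lemma~\ref{lem:lem23}, and you have simply spelled out why $1,\mu\in C-F_p(d_1)$ (from the construction of $d_1$ and the choice of $\mu$) and then invoked Lemma~\ref{lem:lem23} twice. There is no substantive difference between the approaches.
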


\noindent \textbf{Selection of a special color $\theta$: } Since $\vert F_{p'}(d_1) \vert \le \Delta$, there exists a color $\theta$ missing at vertex $p'$. By $Lemma$ \ref{lem:lem23}, $\theta \notin C-F_{p}(d_1) \subseteq F_{p'}(d_1)$. Thus $\theta \in F_{p}(d_1)$. Clearly $\theta \neq 2$ since $2 \in F_{p'}$ and $\theta \neq 1,\mu$ because $1,\mu \notin F_{p}(d_1)$ and hence by $Lemma$ \ref{lem:lem23} we have $1,\mu \in S_{pp'}(d_1)$. Further $\theta \neq \eta$. This is because by $Lemma$ \ref{lem:lem14}, the $(\eta,2)$ maximal bichromatic path starts at vertex $q$ and contains the vertex $y'_j$. Clearly the first three vertices of this path are $q$, $p$, $p'$. Recall that the length of the segment of this path between vertices $q$ and $y'_j$ is at least three. Therefore $\eta \in S_{pp'}(d_1)$. Now without loss of generality let $d_1(p,z_2)=\theta (\neq 1,\eta,\mu,2)$. Note that $z_2$ is a vertex different from $z_1$.

Note that with respect to the coloring $c_{1,\eta}$, the $(1,\eta,xy_1)$ critical path passes through the vertex $z_1$ (by $(\ref{obs:obs5})$). This critical path cannot contain the vertex $z_2$. This is because if $z_2$ is an internal vertex of this critical path, then the edge $pz_2$ should be contained in the path, a contradiction since $c_{1,\eta}(p,z_2)=\theta \neq 1,\eta$. On the other hand if $z_2$ is an end vertex then it implies that $z_2 \in \{x,y_1\}$, a contradiction in view of $Lemma$ \ref{lem:lem15}. Thus vertex $z_2$ is not contained in the $(1,\eta,xy_1)$ critical path. While getting the coloring $d_1$ from $c_{1,\eta}$, this path was broken due to the recoloring of $z_1z'_1$ and $pz_1$. It follows that the $(1,\eta)$ maximal bichromatic path that starts at vertex $y_1$ does not contain vertex $z_2$. Thus we can infer that,

\begin{obs}
\label{obs:obs7}
With respect to the coloring $d_1$, there cannot exist a $(1,\eta,y_1,z_2)$ maximal bichromatic path.
\end{obs}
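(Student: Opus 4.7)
The plan is to locate the $(1,\eta)$ maximal bichromatic path through $y_1$ in $c_{1,\eta}$, describe how the two local edge modifications that produce $d_1$ from $c_{1,\eta}$ affect it, and then check that the resulting endpoint is not $z_2$. By the Critical Path Property together with Observation \ref{obs:obs5}, in $c_{1,\eta}$ there is a $(1,\eta,xy_1)$ critical path whose first five vertices, reading from the $x$ end, are $x,q,p,z_1,z'_1$; reversed, it is a $(1,\eta)$ maximal bichromatic path $P:\; y_1,y'_1,\ldots,z'_1,z_1,p,q,x$. The paragraph immediately preceding the statement already shows that $z_2$ lies nowhere on $P$: if $z_2$ were internal, the edge $pz_2$ of color $\theta\notin\{1,\eta\}$ would have to appear on $P$, a contradiction; and $z_2$ cannot be an endpoint, since Lemma \ref{lem:lem15} rules out $z_2\in\{x,y_1\}$.

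The next step is to compare $c_{1,\eta}$ and $d_1$. Only two edges are modified: $pz_1$ is deleted, and the color of $z_1z'_1$ changes from $\eta$ to $2$. Every edge of $P$ strictly between $y_1$ and $z'_1$ is untouched, so starting at $y_1$ in $d_1$, the (by Fact \ref{fact:fact1}) unique $(1,\eta)$ maximal bichromatic path through $y_1$ retraces the initial segment $y_1,y'_1,\ldots,z'_1$ exactly as in $c_{1,\eta}$. At $z'_1$ this path must halt: properness of $c_{1,\eta}$ forces $z_1z'_1$ to be the unique $\eta$-colored edge incident with $z'_1$, and after the recoloring that edge has color $2$, so in $d_1$ no $\eta$-colored edge remains at $z'_1$.

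To finish, I would rule out $z'_1=z_2$. Since $z_1,z_2\in N_G(p)$, Lemma \ref{lem:lem18}(a) forbids $z_1$ and $z_2$ from being adjacent, so $z'_1\neq z_2$. Combining the three steps, the $(1,\eta)$ maximal bichromatic path through $y_1$ in $d_1$ terminates at $z'_1\neq z_2$, and hence no $(1,\eta,y_1,z_2)$ maximal bichromatic path exists in $d_1$, as claimed. The only delicate point is confirming that the two local recolorings neither extend $P$ past $z'_1$ via some new edge nor perturb $P$ anywhere along its trunk; this is immediate because both recolored edges are incident with $z_1$, and so the $1$- and $\eta$-colored edges at every other vertex of $P$ are unchanged.
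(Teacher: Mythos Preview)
Your argument is correct and follows the same route as the paper: establish that $z_2$ does not lie on the $(1,\eta,xy_1)$ critical path in $c_{1,\eta}$, observe that the two local modifications only truncate this path on the $y_1$ side, and conclude that the resulting maximal bichromatic path through $y_1$ in $d_1$ cannot end at $z_2$. The only difference is that you go further and pin down the exact terminal vertex $z'_1$, then invoke Lemma~\ref{lem:lem18}(a) to rule out $z'_1=z_2$; this last step is harmless but redundant, since you already showed $z_2\notin P$ while $z'_1\in P$.
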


\subsection*{Getting a valid coloring $d_2$ of $G_{1,\eta} - \{pz_2\}$ from $d_1$ of $G_{1,\eta} - \{pz_1\}$ by only local recolorings}

\noindent We get a coloring $d''_1$ of $G_{1,\eta} - \{pz_1,pz_2\}$ from $d_1$ by discarding the edge $pz_2$. Note that the partial coloring $d''_1$ of $G_{1,\eta}$ is valid.

Now recolor the edge $pz_1$ with color the $special\ color$ $\theta$ to get a coloring $d_2$ of $G_{1,\eta} - \{pz_2\}$. Note that the color $\theta$ is a candidate for the edge $pz_1$ with respect to the coloring $d''_1$ since $d''_1(z_1,z'_1)=2$ and $\theta \notin F_p(d''_1)$ since we have removed the edge $pz_2$ (Recall that $d''_1(p,z_2)=\theta$). We claim that $d_2$ is valid also. Clearly if there is any bichromatic cycle created, then it has to be a $(\theta,2)$ bichromatic cycle since $d_2(z_1,z'_1) = 2$. Now this bichromatic cycle has to pass through vertex $p'$ since $d_2(p,p') = 2$. But by the definition of color $\theta$, it was not present at vertex $p'$. Thus there cannot be a $(\theta,2)$ bichromatic cycle. It follows that the partial coloring $d_2$ of $G_{1,\eta} - \{pz_2\}$ is valid. Recall that by $(\ref{eqn:eqn6})$ that there exists no $(1,\eta,xy_1)$ critical path with respect to $d_1$. Note that to get $d_2$ from $d_1$, we just assigned $\theta (\neq 1,2,\eta,\mu)$ to the edge $pz_1$ and removed the edge $pz_2$. Thus there is no chance of $(1,\eta,xy_1)$ critical path getting created with respect to $d_2$. Hence we have,
\begin{eqnarray}
\label{eqn:eqn7}
\mbox{\emph{With respect to the partial coloring $d_2$, there does not exists any $(1,\eta,xy_1)$ critical path.}}
\end{eqnarray}

\subsection*{Getting a valid coloring $c'_{1,\eta}$ of $G_{1,\eta}$ from $d_2$ of $G_{1,\eta} - \{pz_2\}$ by only local recolorings}

\noindent Now we will show that we can give a valid color for the edge $pz_2$ to get a valid coloring for the graph $G_{1,\eta}$. We claim the following:

\begin{lem}
\label{lem:lem24}
With respect to the coloring $d_2$ at least one of the colors $1$, $\mu$ is valid for the edge $pz_2$. (Recall that by $Observation$ \ref{obs:obs6}, $1,\mu \notin F_{p}(d_1)$ and therefore $1,\mu \notin F_{p}(d_2)$)
\end{lem}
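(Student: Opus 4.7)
The plan is to argue by contradiction: suppose that both candidate colors $1$ and $\mu$ are invalid for the edge $pz_2$ under $d_2$. Since $1,\mu \notin F_p(d_2)$ by Observation~\ref{obs:obs6}, both are indeed candidates, so by Fact~\ref{fact:fact2} each invalidity must be witnessed by a critical path. Setting $\beta := d_2(z_2,z'_2)$, this forces $\beta \in F_p(d_2)$ together with both a $(\beta,1,pz_2)$ and a $(\beta,\mu,pz_2)$ critical path in $d_2$; in particular $\beta \neq 1,\mu$. The remainder of the argument is a case analysis on $\beta$, and in every case I will rule out at least one of the two critical paths.

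First I would dispose of $\beta = \theta$: the unique $\theta$-colored edge at $p$ is $pz_1$, so either critical path must begin $p \to z_1$ and then exit $z_1$ via its only remaining edge $z_1z'_1$; but $d_2(z_1,z'_1)=2$, ruling out continuation in either color $1$ or $\mu$. Next, when $\beta$ is the color of some $pz_r$ with $r \geq 3$ (equivalently, $\beta \in F_p(d_2) \setminus \{\eta,2,\theta\}$), both critical paths must exit $p$ via $pz_r$ and then continue through $z_r$'s only other edge $z_rz'_r$; this would require $d_2(z_r,z'_r)$ to equal $1$ for one path and $\mu$ for the other, an impossibility. For $\beta = \eta$, the $(\eta,1,pz_2)$ path must start with the unique $\eta$-edge $pq$ and then use the unique $1$-edge at $q$, namely $qx$ (here $d_2(x,q)=1$ is inherited from $c_1$ by Assumption~\ref{asm:asm3}, and uniqueness follows from $F_q = C\setminus\{2\}$, a consequence of $(\ref{eqn:eqn1})$); the next step would require color $\eta$ at $x$, but $\eta \notin F_x(d_2)$ by the construction of $G_{1,\eta}$, so the path terminates at $x \neq z_2$, and color $1$ is already valid.

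The case I expect to be the main obstacle is $\beta = 2$. Here both critical paths must begin with the unique $2$-edge $pp'$, so by Fact~\ref{fact:fact1} they respectively coincide with the $(2,1)$ and $(2,\mu)$ maximal bichromatic paths containing $p$. By Observation~\ref{obs:obs6}, in the coloring $d_1$ these two maximal bichromatic paths terminate at $z_1$. Passing from $d_1$ to $d_2$ only modifies the edges $pz_1$ (recoloring it from uncolored to $\theta$) and $pz_2$ (deleting its $\theta$-color); since $\theta \notin \{1,2,\mu\}$, none of the alternating $(2,1)$- or $(2,\mu)$-walks can be altered, so these maximal bichromatic paths persist unchanged in $d_2$ and still end at $z_1 \neq z_2$. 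Hence neither a $(2,1,pz_2)$ nor a $(2,\mu,pz_2)$ critical path can exist---the desired contradiction. Since the four cases exhaust $\beta \in F_p(d_2) \setminus \{1,\mu\}$, at least one of $\{1,\mu\}$ is valid for $pz_2$.
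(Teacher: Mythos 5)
Your overall strategy is essentially the paper's: a case analysis on the color $\sigma=d_2(z_2,z'_2)$, with the $\sigma=2$ case disposed of exactly as in the paper (the $(2,1,pz_1)$ and $(2,\mu,pz_1)$ critical paths of Observation~\ref{obs:obs6} survive unchanged into $d_2$ because only the $\theta$-colored edges $pz_1,pz_2$ differ, and then Fact~\ref{fact:fact1} forbids a $(2,1,pz_2)$ or $(2,\mu,pz_2)$ critical path), and the remaining cases handled through the degree-two structure of the $z$-vertices. You phrase it contrapositively via Fact~\ref{fact:fact2} instead of the paper's ``recolor and verify no cycle'' style, and your explicit $\beta=\eta$ case is, if anything, cleaner than the paper's third case, which tacitly assumes the $\sigma$-colored edge at $p$ goes to some $z_j$.

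The one genuine slip is at the start: you claim $1$ and $\mu$ are candidates for $pz_2$ solely because $1,\mu\notin F_p(d_2)$, but candidacy also requires that the color not appear on the adjacent edge $z_2z'_2$, and nothing excludes $d_2(z_2,z'_2)\in\{1,\mu\}$ a priori. In that situation one of your two colors is not a candidate, Fact~\ref{fact:fact2} cannot be invoked for it, and your four cases never apply since then $\beta\notin F_p(d_2)$; this is exactly the paper's case $\sigma\in\{1,\mu\}$, which it treats separately. The repair is immediate with your own machinery: if $\beta\in\{1,\mu\}$, the other color $\zeta\in\{1,\mu\}-\{\beta\}$ is a candidate, and by Fact~\ref{fact:fact2} its invalidity would require a blocking color in $S_{pz_2}\cap S_{z_2p}=\{\beta\}\cap F_p(d_2)=\emptyset$, so $\zeta$ is valid. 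Equivalently, derive the forcing ``$\beta\in F_p(d_2)$, hence $\beta\neq 1,\mu$'' from whichever of the two colors is a candidate before asserting candidacy of both. With that one case added, your proof is complete and matches the paper's in substance.
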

\begin{proof}
Let $d_2(z_2,z'_2)=\sigma$. Now if,
\begin{enumerate}
\item $\sigma = 2$. Recolor the edge $pz_2$ using color 1 to get a coloring $d_3$. The coloring $d_3$ is valid because if a bichromatic cycle gets formed it has to be $(1,2)$ bichromatic cycle containing the vertex $p$ implying that there was a $(2,1,pz_2)$ critical path with respect to $d_2$. But by Observation \ref{obs:obs6}, there was a $(2,1,pz_1)$ critical path with respect to the coloring $d_1$ and hence with respect to the coloring $d_2$ (Note that to get $d_2$ from $d_1$, we just assigned $d_1(p,z_2)=\theta$ ($\neq 1,2,\eta,\mu$) to edge $pz_1$ and removed the edge $pz_2$. Thus the $(2,1,pz_1)$ critical path is not altered during this recoloring ). Thus in view of Fact \ref{fact:fact1}, there cannot be any $(2,1,pz_2)$ critical path with respect to $d_2$ since $z_1 \neq z_2$, a contradiction. Thus the coloring $d_3$ is valid.

\item $\sigma \in \{1,\mu\}$. Recolor the edge $pz_2$ using color $\{1,\mu\}-\{\sigma\}$ to get a coloring $d_3$. The coloring $d_3$ will be valid because if a bichromatic cycle gets formed it has to be $(1,\mu)$ bichromatic cycle containing the vertex $p$. But since color $\sigma \in \{1,\mu\}$ is not present at vertex $p$, such a bichromatic cycle is not possible.

\item $\sigma \notin \{1,2,\mu\}$. Recolor the edge $pz_2$ using color 1 to get a coloring $d'_2$. If the coloring $d'_2$ is valid, then let $d_3 = d'_2$. Otherwise if the coloring $d'_2$ is not valid, then there has to be a $(\sigma,1)$ bichromatic cycle. Now let $d'_2(p,z_j)=\sigma$. Then the bichromatic cycle passes through the vertex $z_j$ and hence $d'_2(z_j,z'_j)=1$, since $degree_{G}(z_j)=2$. Now we recolor edge $pz_2$ with color $\mu$ to get a coloring $d_3$. If there is a bichromatic cycle formed with respect to the coloring $d_3$, then it has to be a $(\mu,\sigma)$ bichromatic cycle and hence it passes through the vertex $z_j$. But color $\mu$ is not present at $z_j$ since $d'_2(z_j,z'_j)=1$. Thus there cannot be any $(\mu,\sigma)$ bichromatic cycle. Hence the coloring $d_3$ is valid.
\end{enumerate}

Thus either color $1$ or $\mu$ is valid for the edge $pz_2$.

\renewcommand{\qedsymbol}{\bqed}
\end{proof}

To get the coloring $d_3$ from $d_2$ we have only given a valid color for the edge $pz_2$ and have not altered the color of any other edge (i.e., only local recolorings are done). Recall that $d_2$ does not have any $(1,\eta,xy_1)$ critical path (by $(\ref{eqn:eqn7})$). Note that $d_3(x,q)=1$ and $d_3(q,p)=\eta$. If we give color $\mu \neq 1,\eta$ to the edge $pz_2$, there is no chance of a $(1,\eta,xy_1)$ critical path getting formed in $d_3$. On the other hand, by giving color 1 to the edge $pz_2$ if a $(1,\eta,xy_1)$ critical path gets formed, then it means that there exists a $(1,\eta,y_1,z_2)$ maximal bichromatic path with respect to $d_2$ and hence with respect to $d_1$. But by $Observation$ \ref{obs:obs7} such a bichromatic path does not exist. Now let $c'_{1,\eta}=d_3$. Thus we have,
\begin{eqnarray}
\label{eqn:eqn8}
\mbox{\emph{With respect to the valid coloring $c'_{1,\eta}$ of $G_{1,\eta}$, there does not exists any $(1,\eta,xy_1)$ critical path.}}
\end{eqnarray}

In getting $c'_{1,\eta}$ from $c_{1,\eta}$ we have done only local recolorings and thus by $Lemma$ \ref{lem:lem21} $c'_{1,\eta}$ can be derived from some valid coloring $c'_1$ of $G_1$. Note that we have not changed the color of the edge $y_1y'_1$ while getting $c'_{1,\eta}$ from $c_{1,\eta}$ since $y_1 \notin \{p,p',z_1,\ldots,z_k,z'_1,\ldots,z'_k\}$ (by $Lemma$ \ref{lem:lem15}). Thus $c'_{1,\eta}(y_1,y'_1)=1$. It follows that the $Critical\ Path\ Property$ of $c'_{1,\eta}$ is the same as $Critical\ Path\ Property$ of $c_{1,\eta}$. This implies that there exists a $(1,\eta,xy_1)$ critical path with respect to the coloring $c'_{1,\eta}$, a contradiction in view of $(\ref{eqn:eqn8})$.

This completes the proof.
\renewcommand{\qedsymbol}{\bqed}
\end{proof}


\end{document}